\numberwithin{equation}{section}
\newcommand{\qed}{\hfill \ensuremath{\Box}}
\def\XXint#1#2#3{{\setbox0=\hbox{$#1{#2#3}{\int}$}
\vcenter{\hbox{$#2#3$}}\kern-.5\wd0}}
\newcommand{\tr}{\textnormal{tr}}
\newcommand{\ga}{\gamma}
\newcommand{\de}{\delta}
\newcommand{\ve}{\varepsilon}
\newcommand{\dbar}{\overline{\partial}}
\newcommand{\ddt}[1]{\frac{\partial #1}{\partial t}}
\newcommand{\ov}[1]{\overline{#1}}
\newcommand{\ofs}{\omega_{\textrm{FS}}}
\newcommand{\ddbar}{\frac{\sqrt{-1}}{2\pi} \partial\dbar}
\begin{document}
\newcounter{remark}
\newcounter{theor}
\setcounter{remark}{0} \setcounter{theor}{1}
\newtheorem{claim}{Claim}
\newtheorem{theorem}{Theorem}[section]
\newtheorem{proposition}{Proposition}[section]
\newtheorem{lemma}{Lemma}[section]
\newtheorem{definition}{Definition}[section]
\newtheorem{corollary}{Corollary}[section]
\newenvironment{proof}[1][Proof]{\begin{trivlist}
\item[\hskip \labelsep {\bfseries #1}]}{\end{trivlist}}
\newenvironment{remark}[1][Remark]{\addtocounter{remark}{1} \begin{trivlist}
\item[\hskip \labelsep {\bfseries #1
\thesection.\theremark}]}{\end{trivlist}}

\begin{center}
{\Large \bf The K\"ahler-Ricci flow on Hirzebruch surfaces
\footnote{Research supported in part by National Science Foundation
grants  DMS-06-04805 and DMS-08-48193.  The second-named author is also supported in part by a Sloan Research Fellowship.
 }}
\\
\bigskip
\bigskip

{\large Jian Song$^{*}$ and
Ben Weinkove$^\dagger$} \\

\end{center}

\bigskip

\bigskip
\bigskip
\noindent
{\bf Abstract} \ We investigate the metric behavior of the K\"ahler-Ricci flow on the Hirzebruch surfaces, assuming the initial metric is invariant under a maximal compact subgroup   of the automorphism group.  We show that, in the sense of Gromov-Hausdorff, the flow either  shrinks to a point, collapses to $\mathbb{P}^1$ or contracts an exceptional divisor,  confirming a conjecture of Feldman-Ilmanen-Knopf.  We also show that similar behavior holds on  higher-dimensional analogues of the  Hirzebruch surfaces.


\section{Introduction}

The behavior of the K\"ahler-Ricci flow on a compact manifold $M$ is expected to reveal the metric and algebraic structures on $M$.  If $M$ is a K\"ahler manifold with $c_1(M)=0$ then the K\"ahler-Ricci flow $\partial \omega/\partial t = - \textrm{Ric}(\omega)$ starting at a metric   $\omega_0$ in any K\"ahler class $\alpha$, converges to the unique Ricci-flat metric in $\alpha$ \cite{Cao1}, \cite{Y1}.  If  $c_1(M) <0$, the normalized K\"ahler-Ricci flow
\begin{eqnarray} \label{KRF1}
\ddt{} \omega  =  \mbox{} \lambda \omega -  \textrm{Ric}(\omega), \quad \omega(0)  =  \omega_0,
\end{eqnarray}
with $\lambda=-1$ and $\omega_0 \in \lambda c_1(M)$ converges to the unique K\"ahler-Einstein metric \cite{Cao1}, \cite{Y1}, \cite{A}.

If $c_1(M)>0$ then K\"ahler-Einstein metrics do not exist in general.  If one assumes the existence of  a K\"ahler-Einstein metric then, according to unpublished work of Perelman \cite{P2} (see \cite{TZhu}), the normalized K\"ahler-Ricci flow (\ref{KRF1}) with $\lambda=1$ and $\omega_0 \in \lambda c_1(M)$  converges to a K\"ahler-Einstein metric (this is due to \cite{H}, \cite{Ch} in the case of one complex dimension).
By a conjecture of Yau \cite{Y2},  a necessary and sufficient condition for $M$ to admit a K\"ahler-Einstein metric  is that $M$ be  `stable in the sense of geometric invariant theory'. Tian \cite{T1} later proposed the condition  of \emph{K-stability}  and this concept has been refined and extended by Donaldson \cite{D}.
One might expect that the sufficiency part of the Yau-Tian-Donaldson conjecture can be proved via the flow (\ref{KRF1}).  Indeed, the problem of using stability conditions to prove convergence properties of the K\"ahler-Ricci flow is
 an area of considerable current interest and we refer the reader to   \cite{PS2}, \cite{PSS}, \cite{PSSW1}, \cite{PSSW2}, \cite{R},  \cite{Sz2}, \cite{PSSW3},  \cite{To} and \cite{CW} for some recent advances (however, this list of references is far from complete).   We also remark that if $M$ is toric, it turns out that the  stability condition
can be replaced by a simpler criterion involving the Futaki invariant,
and  the behavior of (\ref{KRF1}) is then well-understood \cite{WZ}, \cite{Zhu}.

There has also been much interest in understanding the behavior of the flow (\ref{KRF1}) with $\lambda=-1$ on manifolds with $c_1(M) \le 0$ (and not strictly definite).  In this case, smooth K\"ahler-Einstein metrics cannot exist and the K\"ahler class of $\omega(t)$ must degenerate in the limit.   If $M$ is a minimal model of general type, it is shown in \cite{Ts} and later generalized in \cite{TZha} that the K\"ahler-Ricci flow converges to the unique singular K\"ahler-Einstein metric.   If $M$ is not of general type, the K\"ahler-Ricci flow collapses and converges weakly to a generalized K\"ahler-Einstein metric if the canonical line bundle $K_M$ is semi-ample \cite{SoT1, SoT2}.

In contrast, the case when $c_1(M)$ is nonnegative or indefinite has  been little studied.  In complex dimension two, it is natural to consider the rational ruled surfaces,  known as  the Hirzebruch surfaces and denoted  $M_0, M_1, M_2, \ldots$  Indeed, all rational surfaces can be obtained from $\mathbb{P}^2$ and Hirzebruch surfaces via consecutive blow-ups.
 In this paper we describe several distinct behaviors of the K\"ahler-Ricci flow  on the manifolds $M_k$.  We consider the unnormalized K\"ahler-Ricci flow
\begin{eqnarray} \label{KRF0}
\ddt{} \omega  =  \mbox{} -  \textrm{Ric}(\omega), \quad \omega(0)  =  \omega_0.
\end{eqnarray}
for $\omega_0$ in any given K\"ahler class.  Write $\omega(t) = \frac{\sqrt{-1}}{2\pi} g_{i\ov{j}}dz^i \wedge d\overline{z^j}$, for $g=g(t)$ the K\"ahler metric associated to $\omega(t)$.
We find that, in the Gromov-Hausdorff sense, the flow $g(t)$ may:  shrink the manifold to a point, collapse to a lower dimensional manifold or contract a divisor on $M_k$.  As we will see, the particular outcome depends on $k$ and the initial K\"ahler class of $\omega_0$.  Much of this behavior was conjectured by Feldman, Ilmanen and Knopf in their detailed analysis \cite{FIK} of self-similar solutions of the Ricci flow.
   We confirm the Feldman-Ilmanen-Knopf conjectures, under the assumption that the initial metric is invariant under a maximal compact subgroup of the automorphism group.  

  Our results in this paper give some evidence that the K\"ahler-Ricci flow may indeed provide an analytic approach to the classification theory of algebraic varieties as suggested in \cite{SoT2}.
In general,  if the canonical line bundle $K_M$ is not nef, the unnormalized K\"ahler-Ricci flow (\ref{KRF0}) must become singular at some finite time, say $T$.
  If the limiting K\"ahler class is big as $t\rightarrow T$, a number of conjectures about the behavior of the flow have been made in  \cite{SoT1, SoT2, T2}.
  It is conjectured  that the limiting K\"ahler metric has a metric completion $(M', d_T)$, where  $M'$ is an algebraic variety obtained from $M$ by an  algebraic procedure such as a divisorial contraction or flip. It is further proposed  in \cite{T2, SoT3} that $(M', d_T)$ have mild singularities and $(M, \omega(t))$ should converge to $( M', d_T)$ in the sense of Gromov-Hausdorff.  Our main result in this paper confirms this speculation in the case of  $\mathbb{P}^2$ blown up at one point (with any initial K\"ahler class) and a family of higher-dimensional analogues of Hirzebruch surfaces  if the initial K\"ahler metric is invariant under a maximal compact subgroup of the automorphism group.  In addition, our results may be relevant to a recent conjecture of Tian  \cite{T2} that an algebraic manifold is birational to a Fano manifold if and only if the unnormalized K\"ahler-Ricci flow (suitably interpreted) becomes extinct in finite time.

The Hirzebruch surfaces $M_0, M_1, \ldots$ are projective bundles over $\mathbb{P}^1$ which can be described as follows.  Write $H$ and $\mathbb{C}_{\mathbb{P}^1}$ for the hyperplane line bundle and trivial line bundle respectively over $\mathbb{P}^1$.  Then we define the Hirzebruch surface $M_k$ to be
\begin{equation}
M_k = \mathbb{P} (H^k \oplus \mathbb{C}_{\mathbb{P}^1}).
\end{equation}
One can check that $M_0$ and $M_1$ are the only  Hirzebruch surfaces with positive first Chern class.  $M_0$ is the manifold $\mathbb{P}^1 \times \mathbb{P}^1$ and  will not be dealt with in this paper (see instead  \cite{PS1}, \cite{P2}, \cite{TZhu},  \cite{PSSW2}, \cite{Zhu}).  $M_1$ can be identified with $\mathbb{P}^2$ blown up at one point.  It is already known that the normalized K\"ahler-Ricci flow (\ref{KRF1}) with $\lambda=1$ on $M_1$ starting at a toric metric $\omega_0$ in $c_1(M_1)$ converges to a K\"ahler-Ricci soliton after modification by automorphisms (see \cite{Zhu} and also \cite{PSSW3}).  However, the manifold $M_1$ is still of interest to us since we are considering the more general case of the initial metric $\omega_0$ lying in  \emph{any}  K\"ahler class.

Assume now that $k\ge 1$ and denote by $D_{\infty}$ the divisor in $M_k$ given by the image of the section $(0,1)$ of $H^{k} \oplus \mathbb{C}_{\mathbb{P}^1}$.  Since the complex manifold $M_k$ can also be described by $\mathbb{P}(  \mathbb{C}_{\mathbb{P}^1} \oplus H^{-k})$ we can define another divisor $D_0$  on $M_k$ to be that given by the image of the  section $(1,0)$ of $\mathbb{C}_{\mathbb{P}^1} \oplus H^{-k}$.

All of the Hirzebruch surfaces $M_k$ admit K\"ahler metrics.  Indeed, the cohomology classes of the line bundles $[D_0]$ and $[D_{\infty}]$ span $H^{1,1}(M; \mathbb{R})$ and every K\"ahler class $\alpha$ can be  written uniquely as
\begin{equation} \label{alpha}
\alpha = \frac{b}{k} [D_{\infty}] - \frac{a}{k} [D_0]
\end{equation}
for constants $a$, $b$ with $0< a < b$.  If $\alpha_t$ denotes  the K\"ahler class of a solution  $\omega(t)$ of the flow (\ref{KRF0})
 then a short calculation shows that the associated constants $a_t, b_t$ satisfy
 \begin{equation} \label{atbtn2}
 b_t = b_0 - t(k+2) \quad  \textrm{and} \quad   a_t = a_0 + t(k-2).
 \end{equation}

Our goal is to understand the behavior of the K\"ahler-Ricci flow with initial metric $\omega_0$ in any given K\"ahler class $\alpha$.
We focus on the case when $\omega_0$  is invariant under the action of a maximal compact subgroup $G_k \cong U(2)/\mathbb{Z}_k$ of the automorphism group of $M_k$.  We will say that $\omega_0$ satisfies the \emph{Calabi symmetry condition}.
 This symmetry  is explained in detail in Section \ref{calabi} and was used by Calabi \cite{Cal} to construct extremal K\"ahler metrics on $M_k$ (see also \cite{Sz1}).  Our first result shows that under this symmetry condition we can describe the convergence of the flow $(M, g(t))$ in the sense of Gromov-Hausdorff.

\pagebreak[3]
\begin{theorem} \label{thm1}  On the Hirzebruch surface $M_k$,
let $\omega = \omega(t)$ be a solution of the K\"ahler-Ricci flow (\ref{KRF0}) with initial K\"ahler metric $\omega_0$ satisfying the Calabi symmetry condition.  Assume that $\omega_0$ lies in the K\"ahler class $\alpha_0$ given by $a_0, b_0$ satisfying $0< a_0< b_0$.
Then we have the following:
\begin{enumerate}
\item[(a)]  If $k \ge 2$ then the flow (\ref{KRF0}) exists on $[0,T)$ with $T= (b_0-a_0)/2k$ and $(M_k, g(t))$ converges to $(\mathbb{P}^1, a_T  g_{\emph{FS}})$ in the Gromov-Hausdorff sense as $t \rightarrow T$, where $g_{\emph{FS}}$ is the Fubini-Study metric and $a_T$ is the constant given by (\ref{atbtn2}).
\item[(b)] If $k =1$ there are three subcases.
\begin{enumerate}
\item[(i)]  If $b_0=3a_0$ then the flow (\ref{KRF0}) exists on $[0,T)$ with $T=a_0$ and $(M_1, g(t))$ converges to a point in the Gromov-Hausdorff sense as $t \rightarrow T$.
\item[(ii)] If $b_0< 3a_0$ then the flow (\ref{KRF0}) exists on $[0,T)$ with $T=(b_0-a_0)/2k$ and, as in (a) above, $(M_1, g(t))$ converges to $(\mathbb{P}^1, a_T  g_{\emph{FS}})$ in the Gromov-Hausdorff sense as $t \rightarrow T$.
\item[(iii)] If $b_0>3a_0$ then the flow (\ref{KRF0}) exists on $[0,T)$ with $T=a_0$.  On compact subsets of $M_1\setminus D_0$, $g(t)$ converges smoothly to a K\"ahler metric $g_T$.  If
 $(\overline{M}, d_T)$ denotes the metric completion of  $(M_1 \setminus D_0, g_T)$, then
 $(M_1,g(t))$ converges to  $(\overline{M}, d_T)$ in the Gromov-Hausdorff sense as $t \rightarrow T$.     $(\overline{M}, d_T)$ has finite diameter and is homeomorphic to the manifold $\mathbb{P}^2$.   \end{enumerate}
\end{enumerate}
\end{theorem}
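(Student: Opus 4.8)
The plan is to treat case (b)(iii) as a \emph{divisorial contraction}: as $t \to T$ the K\"ahler class degenerates only along $D_0$, so I expect the flow to contract $D_0$ to a point while converging smoothly everywhere else. First I would pin down the existence time and the limiting cohomology class directly from (\ref{atbtn2}). For $k=1$ these read $a_t = a_0 - t$ and $b_t = b_0 - 3t$, and a K\"ahler class requires $0 < a_t < b_t$. Among the three candidate times $t = a_0$ (where $a_t = 0$), $t = b_0/3$ (where $b_t = 0$) and $t = (b_0 - a_0)/2$ (where $a_t = b_t$), the hypothesis $b_0 > 3a_0$ forces $t = a_0$ to be the smallest, so $\alpha_t$ first meets the boundary of the K\"ahler cone at $T = a_0$, and the standard maximal-time characterization for (\ref{KRF0}) gives existence precisely on $[0, T)$. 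At $t=T$ we have $a_T = 0 < b_T = b_0 - 3a_0$, so $\alpha_T = b_T [D_{\infty}]$. Since $D_0$ is the $(-1)$-curve of $M_1 = \mathrm{Bl}_p \mathbb{P}^2$ and $[D_{\infty}] = \pi^* \mathcal{O}_{\mathbb{P}^2}(1)$ for the blow-down $\pi: M_1 \to \mathbb{P}^2$ (note $D_{\infty}\cdot D_0 = 0$ and $D_{\infty}^2 = 1$), the class $\alpha_T$ is nef and big, descends to an ample class on $\mathbb{P}^2$, and is K\"ahler on $M_1 \setminus D_0 \cong \mathbb{P}^2 \setminus \{p\}$ but degenerate along $D_0$. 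This singles out $D_0$ as the divisor to be contracted.

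Next I would prove smooth convergence on compact subsets of $M_1 \setminus D_0$. Using the Calabi symmetry reduction of Section \ref{calabi}, the flow (\ref{KRF0}) becomes a scalar parabolic equation for a potential $\varphi = \varphi(\,\cdot\,, t)$ of a single real variable, the two ends of the interval corresponding to $D_0$ and $D_{\infty}$ with boundary slopes $a_t$ and $b_t$. Away from $D_0$ the limiting class is genuinely K\"ahler, so on any fixed compact $K \subset M_1 \setminus D_0$ I would establish uniform two-sided bounds on $g(t)$ (an upper bound from control by $\alpha_0$, a lower bound from the nondegeneracy of $\alpha_T$ over $K$), and then bootstrap through the reduced equation by parabolic Schauder and Calabi-type third-order estimates to obtain uniform $C^{\infty}_{\mathrm{loc}}$ bounds on $K$. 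It follows that $\varphi(\cdot, t)$, and hence $g(t)$, converges in $C^{\infty}_{\mathrm{loc}}(M_1 \setminus D_0)$ to a smooth K\"ahler metric $g_T$ representing $\alpha_T|_{M_1 \setminus D_0}$.

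The heart of the argument, and the main obstacle, is controlling the geometry near $D_0$ so as to identify the metric completion. Here I would show, again via the reduced one-variable metric, that (i) the $g(t)$-diameter of $D_0$ tends to $0$ as $t \to T$ --- quantitatively its induced area is $\alpha_t \cdot [D_0] = a_t \to 0$, so its diameter is $\lesssim \sqrt{a_t}$ --- and (ii) the $g(t)$-distance from $D_0$ to any fixed level set stays uniformly bounded and converges as $t \to T$. Estimate (ii) reduces to showing that the radial length integral (of the form $\int \sqrt{\varphi''}\, ds$ in the fiber direction, equivalently $\int d\tau/\sqrt{\varphi''(\tau)}$ in the moment variable $\tau = \varphi'$) converges near the $D_0$ end \emph{uniformly in $t$}; this is delicate precisely because the metric is degenerating there, and is where the bulk of the analysis lies. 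Granting (i) and (ii), the limit distance $d_T$ extends continuously across $D_0$, collapsing it to a single point $p^\ast$ at finite distance, so the metric completion $(\overline{M}, d_T)$ of $(M_1 \setminus D_0, g_T)$ is $(M_1 \setminus D_0) \cup \{p^\ast\}$ with finite diameter; since $\pi$ is a biholomorphism off $D_0$ and sends $D_0$ to $p$, the identification $p^\ast \leftrightarrow p$ is a homeomorphism $\overline{M} \to \mathbb{P}^2$.

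Finally I would assemble the Gromov-Hausdorff statement from the two regimes. Given $\ve > 0$, estimates (i)--(ii) let me choose a neighborhood $U$ of $D_0$ whose $g(t)$-diameter is $< \ve$ for all $t$ close to $T$, while on $M_1 \setminus U$ the metrics $g(t)$ converge smoothly, hence in Gromov-Hausdorff distance, to $g_T$, which agrees with $d_T$ away from $p^\ast$. Collapsing $U$ to $p^\ast$ then defines $\ve$-Gromov-Hausdorff approximations between $(M_1, g(t))$ and $(\overline{M}, d_T)$, yielding the asserted convergence $(M_1, g(t)) \to (\overline{M}, d_T)$ as $t \to T$.
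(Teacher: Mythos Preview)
Your outline for case (b)(iii) has the right architecture and matches the paper's strategy at the coarse level. Two points of comparison are worth making.

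For the smooth convergence on compact subsets of $M_1\setminus D_0$, the paper does \emph{not} use the Calabi reduction. It applies the general (symmetry-free) Tsuji/Tian--Zhang argument: a maximum-principle bound on $\dot\varphi$ gives a uniform volume upper bound, a Ko\l odziej-type $L^\infty$ estimate for the degenerate complex Monge--Amp\`ere equation then controls $\varphi$, and Tsuji's second-order estimate (using that $\alpha_t - \ve[D_0]$ remains K\"ahler for small $\ve$) gives local $C^2$ bounds away from $D_0$. Your ODE-bootstrap route is plausible in the symmetric setting, but the phrase ``a lower bound from the nondegeneracy of $\alpha_T$ over $K$'' is a cohomological statement and does not by itself produce a pointwise lower bound on $g(t)$; you would need to say more here.

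The genuine gap is your step (ii): you correctly flag the uniform radial-length control near $D_0$ as the crux, but give no mechanism for it. The paper's device is a pointwise upper bound on the full metric tensor near $D_0$. Integrating the volume bound $(u')^{n-1}u'' \le Ce^{k\rho}$ yields $u'-a_t \le Ce^{k\rho/n}$, and a parabolic maximum principle applied to the quantity $\log u'' - \log(u'-a_t) - \log(b_t-u')$ yields $u'' \le C(u'-a_t)(b_t-u')$, hence $u'' \le Ce^{k\rho/n}$. Combining, one obtains $g_{i\bar j}(t) \le a_t\chi_{i\bar j} + Ce^{(k-n)\rho/n}\delta_{ij}$; for $n=2$, $k=1$ this gives $(g_T)_{i\bar j} \le C r^{-1}\delta_{ij}$ as $a_t\to 0$ (with $r^2=e^\rho$), an integrable singularity at the origin. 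This single estimate simultaneously delivers your (i), your (ii), the shrinking-tubular-neighborhood bound $\lim_{\ve\to 0}\limsup_{t\to T}\mathrm{diam}_{g(t)}N_\ve=0$ needed for the Gromov--Hausdorff step, and the identification of the completion with $\mathbb{P}^2$ (it shows the $d_T$-topology near the added point agrees with the Euclidean one). Without naming this estimate your proposal is incomplete at exactly the place you yourself call ``the bulk of the analysis.''
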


We now make some remarks about this theorem.  As mentioned above, the manifold $M_1$ can be identified with $\mathbb{P}^2$ blown up at one point.
The case (b).(i) occurs precisely when the initial K\"ahler form $\omega_0$ lies in the first Chern class $c_1(M_1)$.  This situation has been well-studied and the convergence result of (b).(i)  is an immediate consequence of the diameter bound of Perelman for the normalized K\"ahler-Ricci flow \cite{P2}, \cite{SeT}.
In the case (b).(iii), we use the   work of \cite{Ts},  \cite{TZha}, \cite{Zha} to obtain the smooth convergence of the metric outside $D_0$.  Our result shows that
the K\"ahler-Ricci flow `blows down' the exceptional curve on $M_1$.
 For more details about this  see Section \ref{sectionGH}.

We see from the above that, assuming the Calabi symmetry,  there are three distinct behaviors of the K\"ahler-Ricci flow on a Hirzebruch surface, depending on $k$ and the initial K\"ahler class:
\begin{itemize}
\item The $\mathbb{P}^1$ fiber collapses (cases (a) and (b).(ii))
\item The manifold shrinks to a point (case (b).(i))
\item The exceptional divisor is contracted (case (b).(iii)).
\end{itemize}

We can say some more about the cases (a) and (b).(ii) when the fiber collapses.  If $D_H$  denotes any fiber of the map $\pi: M_k \rightarrow \mathbb{P}^1$ then the line bundle associated to $D_H$ is given by $[D_H]= \pi^*H$.  The cohomology class of $D_H$ is represented by the smooth (1,1) form $\chi = \pi^* \ofs$ where $\ofs$ is the Fubini-Study metric on $\mathbb{P}^1$.  We can show that in the case $k \ge 2$ or $k=1$ with $b_0<3a_0$, the K\"ahler form $\omega(t)$ along the flow converges to $a_T \chi$ in a certain weak sense which we now explain.  Define for $0\le t < T = (b_0-a_0)/2k$ a reference K\"ahler metric
\begin{equation}
\hat{\omega}_t = a_t \chi + \frac{(b_t-a_t)}{2k} \theta,
\end{equation}
in $\alpha_t$, where $\theta$ is a certain closed nonnegative (1,1) form in $2[D_{\infty}]$ (see Lemma \ref{theta} below).  Observe that $\hat{\omega}_t$ converges to $a_T \chi$ as $t\rightarrow T$.  Now define a potential function $\tilde{\varphi}=\tilde{\varphi}(t)$ by
\begin{equation}
\omega(t) = \hat{\omega}_t + \frac{\sqrt{-1}}{2\pi} \partial \ov{\partial} \tilde{\varphi}(t),
\end{equation}
where $\tilde{\varphi}$ is subject to a normalization condition $\tilde{\varphi}|_{\rho=0}=0$ (see Section \ref{sectioncalabi}).  Then we have:

\begin{theorem} \label{thm2} Assume that $k\ge 2$ or $k=1$ and $b_0< 3a_0$.
Let $\omega(t)$ be a solution of the flow (\ref{KRF0}) on $M_k$ with $\omega_0$ satisfying the Calabi symmetry condition.  Then for all $\beta$ with $0< \beta<1$,
\begin{enumerate}
\item[(i)] $\tilde{\varphi}(t)$ tends to zero in $C^{1,\beta}_{\hat{g}_0}(M_k)$ as $t \rightarrow T$.
\item[(ii)]  For any compact set $K \subset M_k \setminus (D_{\infty} \cup D_0)$, $\tilde{\varphi}(t)$ tends to zero in $C^{2, \beta}_{\hat{g}_0}(K)$ as $t \rightarrow T$.  In particular, on such a compact set $K$, $\omega(t)$ converges to $a_T  \chi$ on $C^{\beta}_{\hat{g}_0}(K)$ as $t \rightarrow T$.
\end{enumerate}
\end{theorem}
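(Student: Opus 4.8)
The plan is to exploit the Calabi symmetry to collapse the fully nonlinear flow (\ref{KRF0}) to a scalar parabolic equation in the single real variable $\rho$, and then to prove the stated convergence through a sequence of a priori estimates obtained from the maximum principle. First I would record the reduction: under the Calabi symmetry condition the metric $\omega(t)$, and hence the potential $\tvarphi(\rho,t)$, depend only on $\rho$, where $\rho\to -\infty$ and $\rho\to +\infty$ correspond to the two divisors $D_0$ and $D_\infty$. Writing $\omega(t)=\homega_t+\ddbar\tvarphi$, the flow becomes a parabolic Monge--Amp\`ere-type equation $\partial_t\tvarphi=\log\big((\homega_t+\ddbar\tvarphi)^2/(\homega_0)^2\big)+F_t$ for an explicit time-dependent term $F_t$; by the symmetry this is a genuinely one-dimensional equation for $\tvarphi(\rho,t)$ with prescribed asymptotics at $\rho=\pm\infty$. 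The essential structural feature is that $b_t-a_t=(b_0-a_0)-2kt\to 0$ as $t\to T$, so the fiber class collapses while the base size $a_t$ stays bounded away from zero; the reference metrics $\homega_t$ are built precisely to absorb this class degeneration, so that $\homega_t\to a_T\chi$.

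Next I would establish the $C^0$ estimate. Applying the maximum principle to $\tvarphi$ and to $\partial_t\tvarphi$ for the reduced equation, I would show that $\|\tvarphi(t)\|_{C^0(M_k)}\to 0$ as $t\to T$. The normalization $\tvarphi|_{\rho=0}=0$ pins down the potential, and the remaining Monge--Amp\`ere mass is controlled by $b_t-a_t\to 0$. This is exactly where the hypothesis enters: one computes $a_T=a_0+\tfrac{(b_0-a_0)(k-2)}{2k}$, which is positive for $k\ge 2$ and equals $\tfrac{3a_0-b_0}{2}$ for $k=1$, so the condition $b_0<3a_0$ guarantees $a_T>0$; thus it is the $\mathbb{P}^1$ fiber, and not the whole manifold or the exceptional divisor, that collapses.

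I would then upgrade to first derivatives. Because $\tvarphi$ is a function of $\rho$ alone, its covariant derivatives with respect to the fixed background $\hat g_0$ are controlled, away from the divisors, by $\partial_\rho\tvarphi$, and near them by the asymptotics of $\partial_\rho\tvarphi$ as $\rho\to\pm\infty$; the quantity $\partial_\rho\tvarphi$ itself I would bound by applying the maximum principle to the $\rho$-differentiated flow equation. Showing $\partial_\rho\tvarphi\to 0$ uniformly up to $\rho=\pm\infty$ yields $\tvarphi(t)\to 0$ in $C^1_{\hat g_0}(M_k)$, and interpolating this uniform $C^1$ bound against the $C^0$ convergence gives convergence in $C^{1,\beta}_{\hat g_0}(M_k)$ for every $\beta<1$, which is part (i). For part (ii), on a compact set $K\Subset M_k\setminus(D_\infty\cup D_0)$ the variable $\rho$ stays in a bounded interval, so $\omega(t)$ remains uniformly equivalent to $\hat g_0$ and the flow is uniformly parabolic on $K$; local parabolic Schauder estimates then give uniform $C^{2,\beta}_{\hat g_0}(K)$ bounds on $\tvarphi$, and interpolating these against the $C^0$ convergence gives $\tvarphi(t)\to 0$ in $C^{2,\beta}_{\hat g_0}(K)$. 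The final assertion $\omega(t)\to a_T\chi$ in $C^\beta_{\hat g_0}(K)$ then follows because $\homega_t\to a_T\chi$ smoothly on $K$.

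The main obstacle is the collapse of the $\mathbb{P}^1$ fiber as $t\to T$, which makes the reduced equation degenerate parabolic near $D_0$ and $D_\infty$: there $\omega(t)$ is not uniformly equivalent to any fixed metric, so uniform second-order bounds simply fail, and this is precisely why (ii) must be restricted to compact subsets avoiding the divisors. The delicate technical heart is therefore the \emph{global} first-order statement (i): one must analyze the behavior of $\tvarphi$ and $\partial_\rho\tvarphi$ as $\rho\to\pm\infty$ carefully enough to verify that, after subtracting the reference potential, the singular contributions of the two divisors cancel, so that $\tvarphi$ extends to a genuine function on all of $M_k$ converging to zero in $C^{1,\beta}_{\hat g_0}$ despite the absence of any uniform ellipticity across the collapsing fiber.
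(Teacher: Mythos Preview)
Your proposal has a genuine gap in part (ii), and for part (i) it diverges from the paper in a way that hides where the real work lies.

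\medskip
\textbf{Part (ii): the uniform parabolicity claim is false.} You assert that on a compact set $K\Subset M_k\setminus(D_\infty\cup D_0)$ the metric $\omega(t)$ ``remains uniformly equivalent to $\hat g_0$'', and then invoke parabolic Schauder estimates. But the whole point of this regime is that the $\mathbb{P}^1$ fiber is collapsing: even on such $K$ one has $u''(\rho,t)\le C(T-t)\to 0$, so in the fiber direction $\omega(t)$ degenerates and the reduced scalar equation $\partial_t u'=u'''/u''+\cdots$ has a leading coefficient $1/u''$ that blows up. There is no uniform parabolicity and standard Schauder theory does not apply. The paper circumvents this entirely: it proves directly, by a maximum principle argument applied to the quantity $u'''/u''$ along the reduced flow, that $|u'''|\le C u''$ uniformly in $t$. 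On $K$ (where $\rho$ is bounded) this yields a uniform bound on $\partial_{x_k}g_{i\bar j}$, hence on $|\nabla_{\hat g_0}g|_{\hat g_0}$, and that third-order control on $\tilde\varphi$ plus pointwise convergence gives the $C^{2,\beta}$ statement. No Schauder estimate is used.

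\medskip
\textbf{Part (i): different mechanism, missing ingredient.} You propose to bound $\partial_\rho\tilde\varphi$ by a maximum principle on the $\rho$-differentiated equation and then analyze its asymptotics at $\rho=\pm\infty$. Note that $|\partial_\rho\tilde\varphi|=|u'-\hat u_t'|\le b_t-a_t\to 0$ is in fact elementary, but this alone does \emph{not} control $|\nabla_{\hat g_0}\tilde\varphi|$ globally: one has $|\nabla_{\hat g_0}\tilde\varphi|^2_{\hat g_0}=(\tilde\varphi')^2/\hat u_0''$, and $\hat u_0''\sim e^{\pm k\rho}$ near the divisors, so you need decay of $\tilde\varphi'$ there with $t$-uniform constants. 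That decay ultimately rests on the volume estimate $\omega^n\le C\Omega$, equivalently $u''\le C\,e^{k\rho}/(1+e^{k\rho})^2$, which is a genuine PDE bound on the full K\"ahler--Ricci flow (the paper's Lemma on the volume form), not a consequence of the scalar maximum principle you describe. The paper packages this more efficiently: it shows $\tr_{\hat g_0}g=u''/\hat u_0''+(n-1)u'/\hat u_0'\le C$, so $\Delta_{\hat g_0}\tilde\varphi$ is uniformly bounded on all of $M_k$; elliptic regularity then gives a uniform $C^{1,\beta}$ bound, and together with the elementary pointwise convergence $\tilde\varphi\to 0$ (from $a_t<u'<b_t$) one obtains (i). Your outline never isolates this volume-form estimate, and without it neither your gradient route nor the paper's Laplacian route goes through.
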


In addition, we can extend our results to higher dimensions by considering  $\mathbb{P}^1$ bundles over $\mathbb{P}^{n-1}$ for $n \ge 2$.
 Write $H$ and $\mathbb{C}_{\mathbb{P}^{n-1}}$ for the hyperplane line bundle and trivial line bundle respectively over $\mathbb{P}^{n-1}$.
Then we define the $n$-dimensional complex manifold
$M_{n,k}$ by
\begin{equation}
M_{n,k} = \mathbb{P} (H^k \oplus \mathbb{C}_{\mathbb{P}^{n-1}}). \label{Mnk}
\end{equation}
We can define divisors $D_0$ and $D_{\infty}$  in the same manner as for $M_k$ above.   The cohomology classes $[D_0]$ and $[D_{\infty}]$ again span $H^{1,1}(M; \mathbb{R})$ (see for example \cite{GH} or \cite{IS}) and the K\"ahler classes are described as in (\ref{alpha}).  Similarly, we have a Calabi symmetry condition, where the maximal compact subgroup of the automorphism group is now  $G_k \cong U(n)/\mathbb{Z}_k$ (see Section \ref{calabi}).
We have the following generalization of Theorem \ref{thm1}.

\begin{theorem} \label{thm3}  On $M_{n,k}$,
let $\omega = \omega(t)$ be a solution of the K\"ahler-Ricci flow (\ref{KRF0}) with initial K\"ahler metric $\omega_0$ satisfying the Calabi symmetry condition.  Assume that $\omega_0$ lies in the K\"ahler class $\alpha_0$ given by $a_0, b_0$ satisfying $0< a_0< b_0$.
Then we have the following:
\begin{enumerate}
\item[(a)]  If $k \ge n$ then the flow (\ref{KRF0}) exists on $[0,T)$ with $T= (b_0-a_0)/2k$ and $(M_{n,k}, g(t))$ converges to $(\mathbb{P}^{n-1}, a_T g_{\emph{FS}})$ in the Gromov-Hausdorff sense as $t \rightarrow T$.
\item[(b)] If $1 \le k \le n-1$ there are three subcases.
\begin{enumerate}
\item[(i)]  If $a_0(n+k) = b_0(n-k)$ then the flow (\ref{KRF0}) exists on $[0,T)$ with $T=a_0/(n-k)$ and $(M_{n,k}, g(t))$ converges to a point in the Gromov-Hausdorff sense as $t \rightarrow T$.
\item[(ii)] If $a_0(n+k) > b_0(n-k)$ then the flow (\ref{KRF0}) exists on $[0,T)$ with $T=(b_0-a_0)/2k$ and, as in (a) above, $(M_{n,k}, g(t))$ converges to $(\mathbb{P}^{n-1}, a_T g_{\emph{FS}})$ in the Gromov-Hausdorff sense as $t \rightarrow T$.
\item[(iii)] If $a_0(n+k) < b_0(n-k)$ then the flow (\ref{KRF0}) exists on $[0,T)$ with $T=a_0/(n-k)$.  On compact subsets of $M_{n,k} \setminus D_0$, $g(t)$ converges smoothly to a K\"ahler metric $g_T$.  If
 $(\overline{M}, d_T)$ denotes the metric completion of  $(M_{n,k} \setminus D_0, g_T)$, then
 $(M_{n,k},g(t))$ converges to  $(\overline{M}, d_T)$ in the Gromov-Hausdorff sense as $t \rightarrow T$.     $(\overline{M}, d_T)$ has finite diameter and is homeomorphic to the  orbifold $\mathbb{P}^{n}/\mathbb{Z}_k$ (see Section \ref{orbifold}).
\end{enumerate}
\end{enumerate}
\end{theorem}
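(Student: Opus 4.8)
The plan is to use the $G_k$-symmetry to reduce the fully nonlinear flow (\ref{KRF0}) to a scalar parabolic equation in one real variable, exactly as in the two-dimensional analysis behind Theorems \ref{thm1} and \ref{thm2}, and then to run the three case analyses dictated by the evolution of the K\"ahler class. Over $M_{n,k} \setminus (D_0 \cup D_\infty)$, which is a $\mathbb{C}^*$-bundle over $\mathbb{P}^{n-1}$, a $G_k$-invariant metric is determined by a single function of the real ``radial'' variable $\rho$ (the logarithm of the fibre-norm). Writing $\omega(t) = \homega_t + \ddbar \tvarphi$ with $\tvarphi = \tvarphi(\rho, t)$, the base and fibre directions decouple under the symmetry, so that the complex Monge--Amp\`ere determinant factorizes into a product of $(n-1)$ base eigenvalues and one fibre eigenvalue. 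The flow (\ref{KRF0}) then becomes a scalar parabolic PDE for $\tvarphi(\rho, t)$, and the image of the $S^1$-moment map of $\omega(t)$ is an interval whose endpoints are exactly the constants $a_t$, $b_t$ of (\ref{alpha}).

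First I would determine the maximal existence time cohomologically. Since $\alpha_t = \alpha_0 - t\, c_1(M_{n,k})$ and a computation gives $c_1(M_{n,k}) = \frac{n+k}{k}[D_\infty] + \frac{k-n}{k}[D_0]$, one obtains $b_t = b_0 - (n+k)t$ and $a_t = a_0 - (n-k)t$, generalizing (\ref{atbtn2}). The flow persists precisely while $\alpha_t$ is K\"ahler, i.e.\ while $0 < a_t < b_t$, so $T$ is the first time at which either $a_t = 0$ (degeneration of the $[D_0]$-direction, contracting $D_0$) or $b_t - a_t = 0$ (collapse of the $\mathbb{P}^1$ fibre). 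For $k \ge n$ the quantity $a_t$ is nondecreasing, so the fibre collapses first and $T = (b_0 - a_0)/2k$, which is case (a). For $1 \le k \le n-1$, comparing the two critical times $(b_0 - a_0)/2k$ and $a_0/(n-k)$ yields the trichotomy governed by the sign of $a_0(n+k) - b_0(n-k)$: fibre collapse in (b).(ii), simultaneous vanishing of both endpoints and hence of the whole class in (b).(i), and contraction of $D_0$ with $T = a_0/(n-k)$ in (b).(iii).

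With $T$ fixed, I would establish the a priori estimates for the reduced PDE. The range of $\tvarphi_\rho$ is forced by the cohomology, so the uniform $C^0$ and moment-map bounds come for free, while the interior higher-order estimates away from $D_0 \cup D_\infty$ follow from the maximum principle applied to the one-variable equation; away from the contracting locus the smooth convergence in (a), (b).(ii), (b).(iii) can be extracted from the general theory of \cite{Ts}, \cite{TZha}, \cite{Zha}. In the fibre-collapse cases (a) and (b).(ii) this gives the $n$-dimensional analogue of Theorem \ref{thm2}, namely $\tvarphi(t) \to 0$ in $C^{1,\beta}$ globally and in $C^{2,\beta}$ on compacta away from $D_0 \cup D_\infty$; hence the fibres shrink to points while the base metric converges to $a_T g_{\mathrm{FS}}$, giving Gromov--Hausdorff convergence to $(\mathbb{P}^{n-1}, a_T g_{\mathrm{FS}})$. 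The point-collapse case (b).(i) then follows from a uniform diameter bound together with the vanishing of the total volume $\int_{M_{n,k}} \omega(t)^n \to 0$.

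The main obstacle is case (b).(iii): proving Gromov--Hausdorff convergence to the metric completion $(\overline{M}, d_T)$ and identifying it with the orbifold $\mathbb{P}^n/\mathbb{Z}_k$. Here the $G_k$-symmetry is decisive, because radial distances are computed by one-dimensional integrals of the square root of the fibre component of $g(t)$; I would use these integrals to show that the diameter stays uniformly bounded, that every $g(t)$-distance converges, and that the entire divisor $D_0 \cong \mathbb{P}^{n-1}$ shrinks to a single point at finite distance as $t \to T$. The genuinely new feature, absent when $k = 1$, is the cyclic quotient singularity: the normal bundle of $D_0$ is $\mathcal{O}_{\mathbb{P}^{n-1}}(-k)$, so the local model for the contraction is the cone $\mathbb{C}^n/\mathbb{Z}_k$, and one must match the Calabi-symmetric metric near $D_0$ with this model to conclude that $(\overline{M}, d_T)$ is homeomorphic to $\mathbb{P}^n/\mathbb{Z}_k$ and has finite diameter. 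Combining the smooth convergence off $D_0$ with the collapse of $D_0$ to the apex of the cone then yields the claimed Gromov--Hausdorff convergence.
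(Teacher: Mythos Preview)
Your overall architecture matches the paper's: reduce via the $G_k$-symmetry to a scalar parabolic equation for the potential $u(\rho,t)$, read off $T$ from the evolution of $(a_t,b_t)$, and split into cases. The cohomological computation and the trichotomy are correct, and for case (b).(iii) your outline is close to what the paper does (Theorem~\ref{theoremTZ} for smooth convergence off $D_0$, then a metric bound near $D_0$ to identify the completion).

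There is, however, a genuine gap in the fibre-collapse cases (a) and (b).(ii). You invoke \cite{Ts}, \cite{TZha}, \cite{Zha} for ``smooth convergence away from the contracting locus,'' but those results require the limiting class $\alpha_T$ to be big, whereas here $\alpha_T = a_T[D_H]$ satisfies $\int_M \alpha_T^n = 0$; there is no smooth convergence anywhere, and no $L^\infty$ potential theory to appeal to. The paper's substitute has two ingredients you do not mention. First, a parabolic Schwarz lemma (Lemma~\ref{trchi}, valid without symmetry) gives $\tr_{\omega}\chi \le C$, which bounds the base eigenvalues from below. Second, under the symmetry one applies the maximum principle to $u'''/u''$ (using the boundary values $\pm k$ at $\rho \to \mp\infty$) to get $|u'''| \le C u''$, and then combines this with $\int_{\mathbb{R}} u''\,d\rho = b_t - a_t = 2k(T-t)$ to deduce $u'' \le C(T-t)$ (Lemma~\ref{lemmaest1}). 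This is the estimate that actually forces the fibre to collapse; together with the Schwarz lemma and the trivial lower bound $\omega \ge a_t\chi$ it yields the Gromov--Hausdorff convergence to $(\mathbb{P}^{n-1}, a_T g_{\mathrm{FS}})$. A smaller point: in (b).(i), ``uniform diameter bound plus vanishing volume'' does not by itself give convergence to a point (a thin rectangle collapses to a segment); the paper instead uses Perelman's diameter bound for the \emph{normalized} flow and then rescales. Finally, in (b).(iii) the concrete mechanism replacing your ``radial integrals'' is the pair of estimates $u'-a_t \le Ce^{k\rho/n}$ and $u'' \le C(u'-a_t)(b_t-u')$ (Lemmas~\ref{lemmak1up},~\ref{lemmaudpk1}), which give $g_T \le C r^{-2(n-k)/n}\delta$ near the origin; since the exponent is strictly less than $2$, the completion adds a single point and the identification with $\mathbb{P}^n/\mathbb{Z}_k$ follows.
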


We also prove an analog of Theorem \ref{thm2} in higher dimensions (see Theorem \ref{thmconv} below).
Since Theorem \ref{thm3} includes Theorem \ref{thm1} as a special case, we will prove all of our results in this paper in the general setting of complex dimension $n$.  We will often write $M$ for $M_{n,k}$.

Finally, we mention some known results about K\"ahler-Ricci solitons on these manifolds.  For $ 1\leq k \leq n-1$, the manifold $M_{n,k}$ has positive first Chern class and admits  a K\"ahler-Ricci soliton  \cite{Koi, Cao2}.  In addition, K\"ahler-Ricci solitons have been constructed on the orbifolds $\mathbb{P}^{n}/\mathbb{Z}_k$ for $2\leq k\leq n-1$ \cite{FIK} and, in the noncompact case,  on line bundles over $\mathbb{CP}^{n-1}$  \cite{Cao2, FIK}.  The limiting behavior of such solitons is studied and used in \cite{FIK} to construct examples of extending the Ricci flow through singularities.
Theorem \ref{thm3}  shows in particular that if the initial K\"ahler is not proportional to $c_1(M_{n,k})$, the K\"ahler-Ricci flow on $M_{n,k}$ ($1\leq k \leq n$) will not converge to a K\"ahler-Ricci soliton on the same manifold after normalization.

The outline of the paper is as follows.  In Section \ref{background}, we describe some background material including the details of the Calabi ansatz.
In Section \ref{genestimates} we prove some estimates for the K\"ahler-Ricci flow on the manifolds $M_k$ which hold without any symmetry condition.  We then impose the Calabi symmetry assumption in Section \ref{sectioncalabi} to give stronger estimates, and in particular, we prove
Theorem \ref{thm2} (cf. Theorem \ref{thmconv}).  In Section \ref{sectionGH} we give proofs of the Gromov-Hausdorff convergence of the flow, thus establishing Theorems \ref{thm1} and \ref{thm3}.

\section{Background} \label{background}

\subsection{The anti-canonical bundle}

Let $M=M_{n,k}$ be the manifold given by (\ref{Mnk}).   The anti-canonical bundle $K_M^{-1}$ of $M$ can be described as follows.  If $\pi : M \rightarrow \mathbb{P}^{n-1}$ is the bundle map then  write $D_H = \pi^{-1} (H_{n-1})$ where $H_{n-1}$ is a fixed hyperplane in $\mathbb{P}^{n-1}$.
Then the anti-canonical line bundle $K^{-1}_{M}$  is given by
 \begin{equation} \label{canonical}
K^{-1}_{M} =  2[D_{\infty}] - (k-n) [D_H] =  \frac{(k+n)}{k} [D_{\infty}] + \frac{(k-n)}{k} [D_0].
 \end{equation}
and we have
\begin{equation}
 k [D_H] = [D_{\infty}] - [D_0].
 \end{equation}

\subsection{The Calabi ansatz} \label{calabi}

We now briefly describe the ansatz of \cite{Cal} following, for the most part, Calabi's exposition.  We use coordinates $(x_1, \ldots, x_n)$ on $\mathbb{C}^n\setminus \{ 0 \}$.  Then the manifold $\mathbb{P}^{n-1}= (\mathbb{C}^n\setminus \{0\})/\mathbb{C}^*$ can be described by $n$ coordinate charts $U_{1}, \ldots, U_n$, where $U_{i}$ is characterized by $x_{i} \neq 0$.  For a fixed $i$, the holomorphic coordinates $z^{j}_{(i)}$ on $U_{i}$, for $1 \le j \le n$, $j \neq i$ are given by $z^{j}_{(i)} = x_{j}/x_{i}$.  Then $M=M_{n,k}$ can be defined as the $\mathbb{P}^1$ bundle over $\mathbb{P}^{n-1}$  with a projective fiber coordinate $y_{(i)}$  on $\pi^{-1}(U_{i})$ which transforms by
\begin{equation}
y_{(\ell)} = \left( \frac{x_{\ell}}{x_{i}} \right)^k y_{(i)}, \quad \textrm{on } \  \pi^{-1}(U_{i} \cap U_{\ell}),
\end{equation}
where  $\pi: M \rightarrow \mathbb{P}^{n-1}$ denotes the bundle map.
Then the divisors $D_0$ and $D_{\infty}$ are given by $y_{(i)}=0$ and $y_{(i)}= \infty$ respectively.  We parametrize $M \setminus (D_0 \cup D_{\infty})$ by a $k$-to-one map $\mathbb{C}^n \setminus \{ 0 \} \rightarrow M \setminus (D_0 \cup D_{\infty})$ described as follows.  The point $(x_1, \ldots, x_n)$, with $x_{i}\neq 0$ say, maps to the point in $M \setminus (D_0 \cup D_{\infty}) \cap \pi^{-1}(U_{i})$ with coordinates $z^{j}_{(i)}= x_{j}/x_{i}$, $y_{(i)} = x_{i}^k$, for $j \neq i$.

It is shown in \cite{Cal} that the group $G_k \cong U(n)/\mathbb{Z}_k$  is a maximal compact subgroup of the automorphisms of $M$ via the natural action on $\mathbb{C}^n\setminus \{0\}$.  Moreover, any K\"ahler metric $g_{i \ov{j}}$ on $M$ which is invariant under $G_k$ is described on $\mathbb{C}^n \setminus \{ 0 \}$ as $g_{i \ov{j}} = \partial_i \partial_{\ov{j}} u$ for a potential function $u= u (\rho)$, where
\begin{equation} \label{rho}
\rho = \log \left( \sum_{i=1}^n |x_i|^2 \right).
\end{equation}

The potential function $u$ has to satisfy certain properties in order to define a K\"ahler metric.  Namely,
a K\"ahler metric $g_{i \ov{j}}$ with K\"ahler form $\omega = \frac{\sqrt{-1}}{2\pi} g_{i \ov{j}}dz^i \wedge d\ov{z^j}$ on $M$ in the class (see (\ref{alpha}))
\begin{equation} \label{alpha2}
\alpha = \frac{b}{k} [D_{\infty}] - \frac{a}{k} [D_0]
\end{equation}
 is given by the potential function $u: \mathbb{R} \rightarrow \mathbb{R}$ with $u'>0$, $u''>0$ together with the following asymptotic condition.  There exist  smooth functions $u_0, u_{\infty}: [0,\infty) \rightarrow \mathbb{R}$ with $u'_0(0)>0$, $u_{\infty}'(0)>0$ such that
\begin{equation} \label{u0}
u_0(e^{k\rho}) = u(\rho) -a\rho, \quad u_{\infty}(e^{-k\rho}) = u(\rho)-b\rho,
\end{equation}
for all $\rho \in \mathbb{R}$.
It follows that
$$ \lim_{\rho \rightarrow - \infty} u'(\rho) = a < b= \lim_{\rho \rightarrow \infty} u'(\rho).$$
Note that the divisor $D_0$ corresponds to $\rho = - \infty$ while $D_{\infty}$ corresponds to $\rho=\infty$.
The K\"ahler metric $g_{i \ov{j}}$ associated to $u$  is given in the $x_i$ coordinates by
\begin{equation} \label{metric}
g_{i \ov{j}} = \partial_i \partial_{\ov{j}} u = e^{-\rho} u'(\rho) \delta_{i j} + e^{-2\rho} \ov{x}_i x_j (u''(\rho) - u'(\rho)).
\end{equation}
Conversely, a K\"ahler metric $g$ determines the function $u$ up to the addition of a constant.

The metric $g$ has determinant
\begin{equation} \label{det}
\det g = e^{-n \rho} (u'(\rho))^{n-1} u''(\rho).
\end{equation}
Thus, if we define
\begin{equation} \label{Riccipotential}
v = - \log \det g = n \rho - (n-1) \log u'(\rho) - \log u''(\rho)
\end{equation}
then the Ricci curvature tensor $R_{i \ov{j}} = \partial_i \partial_{\ov{j}} v$ is given by
\begin{equation} \label{Ricci}
R_{i \ov{j}} = e^{-\rho} v'(\rho) \delta_{ij} + e^{-2\rho} \ov{x}_i x_{j} (v''(\rho) - v'(\rho)).
\end{equation}

Finally, we construct a reference metric $\hat{\omega}$ in the class $\alpha$ given by (\ref{alpha2}).  Define a potential function $\hat{u}$ by
\begin{equation} \label{hatu}
\hat{u}(\rho) = a \rho + \frac{(b-a)}{k} \log (e^{k\rho}+1).
\end{equation}
Then one can check by the above definition that the associated K\"ahler form $\hat{\omega}$ lies in the class $\alpha$.  We observe in addition that $\hat{\omega}$ can be decomposed into a sum of nonnegative (1,1) forms.
The smooth (1,1) form $\chi = \pi^*\ofs \in [D_H]$
 is represented by  the straight line function $u_{\chi}$:
\begin{equation} \label{eqnchi}
u_{\chi}(\rho) = \rho, \quad \chi = \frac{\sqrt{-1}}{2\pi} \partial \ov{\partial} u_{\chi}.
\end{equation}
In addition,
let $u_{\theta}$ and $\theta$ be respectively the potential and associated closed (1,1)-form defined by
\begin{equation} \label{eqntheta}
u_{\theta} = 2 \log(e^{k\rho}+1), \quad \theta = \frac{\sqrt{-1}}{2\pi} \partial \ov{\partial} u_{\theta}.
\end{equation}
The form $\theta$ lies in the cohomology class $2[D_{\infty}]$.  Moreover,
\begin{equation}
\hat{u} = a u_{\chi} + \frac{(b-a)}{2k} u_{\theta}, \quad \textrm{and} \quad
\hat{\omega}= a \chi + \frac{(b-a)}{2k} \theta \in \alpha.
\end{equation}

The following lemma will be useful later.

\begin{lemma} \label{theta}
The smooth nonnegative closed (1,1) form $\theta$ in  $2[D_{\infty}]$ given by (\ref{eqntheta}) satisfies
$$ \chi^{n-1} \wedge \theta >0 \quad \textrm{and} \quad \int_M \theta^n >0.$$
\end{lemma}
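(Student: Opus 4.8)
The plan is to exploit the fact that, in the Calabi ansatz, every $G_k$-invariant $(1,1)$ form is \emph{simultaneously} diagonalized by the radial structure on $\mathbb{C}^n\setminus\{0\}$, so that the two wedge products reduce to products of eigenvalues. First I would read off from (\ref{metric}) the eigenvalue structure of the Hermitian form with potential $u(\rho)$: writing the rank-one term $e^{-2\rho}\ov{x}_i x_j(u''-u')$ in terms of the Euler (radial) direction $x$, one finds the eigenvalue $e^{-\rho}u'(\rho)$ with multiplicity $n-1$ (the directions tangent to the base $\mathbb{P}^{n-1}$) together with the single eigenvalue $e^{-\rho}u''(\rho)$ in the remaining ``vertical'' fiber direction. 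This is consistent with (\ref{det}), since the product of these eigenvalues is exactly $e^{-n\rho}(u')^{n-1}u''$. The decisive point is that the eigenframe is independent of the potential, so $\chi$ and $\theta$ are simultaneously diagonal on $M\setminus(D_0\cup D_\infty)$.

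Applying this to the potentials $u_\chi=\rho$ of (\ref{eqnchi}) and $u_\theta=2\log(e^{k\rho}+1)$ of (\ref{eqntheta}): the form $\chi$ has horizontal eigenvalues $e^{-\rho}$ and \emph{vanishing} vertical eigenvalue $e^{-\rho}u_\chi''=0$ (reflecting that $\chi=\pi^*\ofs$ is pulled back from the base), while $\theta$ has horizontal eigenvalues $e^{-\rho}u_\theta'$ and vertical eigenvalue $e^{-\rho}u_\theta''$. A direct computation gives $u_\theta'=2ke^{k\rho}/(e^{k\rho}+1)>0$ and $u_\theta''=2k^2e^{k\rho}/(e^{k\rho}+1)^2>0$ for every $\rho$, so all eigenvalues of $\theta$ are strictly positive on $\mathbb{C}^n\setminus\{0\}$.

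Next I would evaluate the two products in this common eigenframe. Since $\chi$ has its single zero eigenvalue precisely in the vertical direction, only the vertical eigenvalue of $\theta$ survives in $\chi^{n-1}\wedge\theta$, yielding
\[
\chi^{n-1}\wedge\theta=(n-1)!\,e^{-n\rho}\,u_\theta''(\rho)\,dV>0 ,
\]
on $M\setminus(D_0\cup D_\infty)$, where $dV>0$ is the Euclidean volume form on $\mathbb{C}^n\setminus\{0\}$; likewise
\[
\theta^n=n!\,e^{-n\rho}\,(u_\theta')^{n-1}\,u_\theta''\,dV ,
\]
which is $\ge 0$ and strictly positive on the same dense open set, since there $u_\theta'>0$ and $u_\theta''>0$. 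Because $\theta$ is a smooth nonnegative form on the compact manifold $M$, both $\chi^{n-1}\wedge\theta$ and $\theta^n$ are globally nonnegative top-forms that are strictly positive on the dense open set $M\setminus(D_0\cup D_\infty)$, whence $\int_M\chi^{n-1}\wedge\theta>0$ and $\int_M\theta^n>0$.

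The only delicate point I anticipate is making sense of the pointwise inequality $\chi^{n-1}\wedge\theta>0$ at the divisors $D_0$ and $D_\infty$, where the coordinates $(x_1,\dots,x_n)$ degenerate. I expect this to be the sole genuine obstacle: to confirm strict positivity there one must re-express $\chi$ and $\theta$ in the honest fiber coordinates $y_{(i)}$ near $\rho=\pm\infty$ and check that the computed density extends to a strictly positive multiple of the local volume form. If, however, only the integral (cohomological) statements are needed downstream, then global nonnegativity on $M$ together with strict positivity on the dense open set already delivers both positive integrals, and this delicate boundary analysis can be bypassed.
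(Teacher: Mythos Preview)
Your computation in the Calabi eigenframe is correct on $M\setminus(D_0\cup D_\infty)$ and suffices for the integral $\int_M\theta^n>0$, but the pointwise inequality $\chi^{n-1}\wedge\theta>0$ really is needed globally on $M$ (it is used in Lemma~\ref{lemmavolbound} to get a uniform positive lower bound on $\hat\omega_t^n/(T-t)$). So the boundary analysis you flag cannot simply be bypassed.

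The paper avoids this boundary analysis entirely with a one-line trick that you may find instructive. From the decomposition $\hat\omega=a\chi+\frac{b-a}{2k}\theta$ one has $\theta=\frac{2k}{b-a}(\hat\omega-a\chi)$; since $\chi=\pi^*\ofs$ is pulled back from an $(n-1)$-dimensional base, $\chi^n=0$, and therefore
\[
\chi^{n-1}\wedge\theta=\frac{2k}{b-a}\,\chi^{n-1}\wedge\hat\omega.
\]
Now $\hat\omega$ is an honest K\"ahler metric on all of $M$, and $\chi$ has rank exactly $n-1$ everywhere (because $\pi$ is a submersion), so $\chi^{n-1}\wedge\hat\omega$ is a strictly positive top form at every point of $M$, divisors included. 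This single algebraic substitution replaces your local eigenvalue computation and the entire asymptotic check near $D_0$, $D_\infty$. Your approach is more hands-on and would work once the coordinate change near $\rho=\pm\infty$ is carried out, but the paper's argument buys global positivity for free by trading the degenerate form $\theta$ for the nondegenerate $\hat\omega$.
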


\begin{proof}
 Note that $$\chi^{n-1} \wedge \theta  = \frac{2k}{b-a} \chi^{n-1} \wedge \hat{\omega} >0.$$ Also,  from the construction of $\theta$  and the formula (\ref{det}), we have $\int_M \theta^n >0$.
  \qed
\end{proof}

\subsection{The orbifold $\mathbb{P}^n/\mathbb{Z}_k$} \label{orbifold}

Using homogeneous coordinates $Z_1, \ldots, Z_{n+1}$,  we let $\mathbb{P}^n/\mathbb{Z}_k$ be the weighted projective space invariant under
$$j \cdot [Z_1, \ldots, Z_{n+1}] = [e^{2\pi j \sqrt{-1}/k} Z_0,  \ldots, e^{2\pi j \sqrt{-1}/k}Z_n,  Z_{n+1}],$$
for $j=0,1,2, \ldots, k-1$.
The space $\mathbb{P}^n/\mathbb{Z}_k$ has a natural orbifold structure, branched over the point $[0,\ldots, 0,1]$.  In addition, there is a holomorphic map $f: M_{n,k} \rightarrow \mathbb{P}^n/\mathbb{Z}_k$ given as follows.  A point in $\pi^{-1}(U_i)$ with coordinates $z_{(i)}^j$ (for $j \neq i$) and fiber coordinate $y_{(i)}$ maps  to the point in $\mathbb{P}^n/\mathbb{Z}_k$ with homogeneous coordinates $$[z_{(i)}^1, \ldots, z_{(i)}^{i-1}, 1, z_{(i)}^{i+1}, \ldots, z_{(i)}^n, y_{(i)}^{-1} ],$$
or $[0, \ldots, 0,1]$ if $y_{(i)}=0$.
The map $f$ is well-defined because of the group action.   Note that the inverse image $f^{-1}([0,\ldots, 0,1])$ is the divisor $D_0$ and
\begin{equation}
f|_{M_{n,k}\setminus D_0} : M_{n,k}\setminus D_0 \rightarrow (\mathbb{P}^n/\mathbb{Z}_k) \setminus \{ [0,\ldots, 0,1] \}
\end{equation}
is an isomorphism.  In the case $n=2$, $k=1$, the divisor $D_0$ is the exceptional curve on $\mathbb{P}^2$ blown up at one point and $f: M_{2,1} \rightarrow \mathbb{P}^2$ is the blow-down map.

Finally we note that there is a map $\mathbb{C}^n  \rightarrow (\mathbb{P}^n/\mathbb{Z}_k)\setminus \{ Z_{n+1}=0\}$ given by
$$(x_1, \ldots, x_n) \mapsto [x_1, \ldots, x_n, 1].$$
This map is $k$-to-one on $\mathbb{C}^n \setminus \{ 0 \}$.
With respect to these coordinates (and the corresponding coordinates $(x_1, \ldots, x_n)$ on $M_{n,k}$ as described above)  one can check that $f$ is the identity map on $\mathbb{C}^{n}\setminus \{0\}$.

\section{Estimates for the K\"ahler-Ricci flow} \label{genestimates}

In this section we prove some  estimates for a solution $\omega(t)$ to the K\"ahler-Ricci flow (\ref{KRF0}) on $M=M_{n,k}$ that hold \emph{without} the assumption of  Calabi symmetry.  If   the initial metric $\omega_0$ lies in the class $\alpha_0$ given by constants $0<a_0 <b_0$ then
 the K\"ahler class $\alpha_t$ of $\omega(t)$ evolves by
\begin{equation}
\alpha_t = \frac{b_t}{k} [D_{\infty}] - \frac{a_t}{k} [D_0]= \frac{(b_t-a_t)}{k} [D_{\infty}] + a_t [D_H],
\end{equation}
where
\begin{equation} \label{btat}
b_t = b_0 - (k+n)t \quad \textrm{and} \quad a_t = a_0 + (k-n)t.
\end{equation}

Define
 \begin{equation}
 T = \sup\{ t\geq 0 \ | \ \alpha_t \textrm{ is K\"ahler} \}.
 \end{equation}
Note that the K\"ahler metric $\hat{\omega}_t$ given by \begin{equation} \label{ref1}
\hat{\omega}_t=a_t \chi + \frac{(b_t-a_t)}{2k} \theta
 \end{equation}
 for $t \in [0,T)$ and $\theta$ from Lemma \ref{theta} lies in $\alpha_t$.

We observe that the K\"ahler-Ricci flow exists on $[0,T)$.

\begin{theorem}
There exists a unique smooth solution of the K\"ahler-Ricci flow (\ref{KRF0}) on $M$  starting with $\omega_0 \in \alpha_0$  for $t$ in  $[0, T)$.
\end{theorem}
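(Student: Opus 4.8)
The plan is to convert the geometric flow (\ref{KRF0}) into a scalar parabolic complex Monge--Amp\`ere equation, establish short-time existence for that equation by parabolicity, and then identify the maximal existence interval with $[0,T)$ using a priori estimates of maximum-principle type.

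For the reduction, I would use the explicit reference metrics $\homega_t$ of (\ref{ref1}). Since $a_t,b_t$ are affine in $t$ by (\ref{btat}), with $\dot a_t = k-n$ and $\dot b_t = -(k+n)$, the form $\partial_t \homega_t = (k-n)\chi - \theta$ is a fixed closed $(1,1)$ form, independent of $t$, and $[\partial_t\homega_t] = \frac{d}{dt}\alpha_t = c_1(K_M)$ by (\ref{canonical}). Because $\ric$ of a smooth volume form represents $c_1(M) = -c_1(K_M)$, the $\partial\dbar$-lemma on the compact K\"ahler manifold $M$ yields a fixed smooth volume form $\Omega$ with $\ric(\Omega) = -\partial_t\homega_t$ as forms. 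Writing $\omega(t) = \homega_t + \ddbar\,\hvarphi$ and using $-\ric(\omega) = -\ric(\Omega) + \ddbar \log(\omega^n/\Omega)$, the flow (\ref{KRF0}) becomes equivalent, after normalizing the additive time-constant, to
\[
\ddt{\hvarphi} = \log \frac{(\homega_t + \ddbar\,\hvarphi)^n}{\Omega},
\]
with initial data $\hvarphi(0)$ chosen so that $\homega_0 + \ddbar\,\hvarphi(0) = \omega_0$; here $\omega(t)$ solves (\ref{KRF0}) with $\omega(0)=\omega_0$ if and only if $\hvarphi$ solves this equation.

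Short-time existence and uniqueness come next. The linearization of the right-hand side at a metric $\omega = \homega_t + \ddbar\,\hvarphi > 0$ is the Laplacian $\Delta_\omega$, so the equation is strictly parabolic as long as $\homega_t + \ddbar\,\hvarphi$ stays positive; standard parabolic theory (as in Cao \cite{Cao1}) then provides a unique smooth solution on a short time interval, and uniqueness on the whole interval of existence follows from the maximum principle applied to the difference of two solutions.

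The crux is to show that the solution persists exactly up to $T$. Fix any $T' < T$; then $\alpha_{T'}$ is K\"ahler, so $\homega_{T'} > 0$ and the reference metrics $\homega_t$ are uniformly equivalent to a fixed K\"ahler metric on $[0,T']$. I would then derive uniform a priori estimates on $[0,T']$: a $C^0$ bound on $\hvarphi$ and on $\ddt{\hvarphi}$ by the maximum principle, a second-order parabolic Aubin--Yau estimate bounding $\tr_{\homega_{T'}} \omega$ from above (which, together with the volume form, gives two-sided bounds on the metric), and then a parabolic Evans--Krylov argument plus Schauder bootstrapping for all higher derivatives. These furnish uniform $C^\infty$ bounds on $[0,T']$, so the solution extends past $T'$; since $T'<T$ was arbitrary, the flow exists on all of $[0,T)$, and it cannot continue past $T$ because a smooth solution would force $\alpha_t$ to remain K\"ahler, whereas $\alpha_t$ ceases to be K\"ahler at $t=T$. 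The main obstacle is exactly this uniform non-degeneration of the metric as $t\to T$; it is the content of the maximal-existence-time theorem of Tian--Zhang \cite{TZha} (building on \cite{Cao1}), which I would either invoke directly or reprove here using the explicit reference metrics $\homega_t$, where all the required estimates are especially transparent.
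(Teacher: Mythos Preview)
Your proposal is correct and follows the same approach as the paper. The paper's proof simply cites the general maximal-existence-time theorem of \cite{Cao1}, \cite{Ts}, \cite{TZha} (that the flow exists precisely on $[0,T)$ where $T=\sup\{t\ge 0:\alpha_0+t[K_X]>0\}$), whereas you have written out the standard reduction and a priori estimates that underlie that result; but this is the same argument, just made explicit rather than invoked.
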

\begin{proof}
This follows from a general and well-known result in the K\"ahler-Ricci flow.  Indeed, let $X$ be any K\"ahler manifold and $\alpha_0$ be a K\"ahler class on $X$ with $\omega_0 \in \alpha_0$.  If
$$T = \sup \{ t \ge 0 \ | \  \alpha_0 + t [K_X] >0 \},$$
then it is shown in \cite{Cao1}, \cite{Ts}, \cite{TZha} that there is a unique smooth solution $\omega=\omega(t)$ of the K\"ahler-Ricci flow (\ref{KRF0}) on $X$ starting at $\omega_0$, for $t$ in $[0,T)$.\qed
\end{proof}

We now deal with the behavior of the flow as $t \rightarrow T$ in various different cases.

\subsection{The case $k \ge n$} \label{kgen}

In this case, the class $\alpha_t$ remains K\"ahler for $0< t < T$ where
$$T = \frac{b_0-a_0}{2k}.$$  As $t \rightarrow T$, the difference $b_t - a_t$ tends to zero, while the constant $a_t$ remains bounded below away from zero.

The reference metric $\hat{\omega}_t$ in $\alpha_t$ is given by
 \begin{equation} \label{ref2}
\hat{\omega}_t=a_t \chi + \frac{(b_t-a_t)}{2k} \theta =a_t \chi + (T-t) \theta  \in \alpha_t.
\end{equation}
From (\ref{canonical}) we see that the closed (1,1) form $\theta - (k-n) \chi$ lies in the first Chern class $c_1(M)$.  Hence
 there is a smooth volume form $\Omega$  on $M$ such that $$ \ddbar\log \Omega = -\theta + (k-n)\chi.$$
We consider the  parabolic Monge-Amp\`ere equation:

\begin{equation}\label{flow2}
 \ddt{\varphi} = \log \frac{ (\hat{\omega}_t +\ddbar \varphi)^n}{(T-t) \Omega}, \qquad \varphi|_{t=0}= \varphi_0,
\end{equation}
where $\hat{\omega}_0 + \frac{\sqrt{-1}}{2\pi}  \partial \ov{\partial} \varphi_0= \omega_0 \in \alpha_0$.  If $\varphi=\varphi(t)$ solves (\ref{flow2}) then
$$\omega(t) = \hat{\omega}_t + \frac{\sqrt{-1}}{2\pi} \partial \ov{\partial} \varphi$$
solves the K\"ahler-Ricci flow (\ref{KRF0}).

Note that in the following two lemmas, we only use the assumption $k \ge n$ to obtain a  uniform lower bound of the constant $a_t$ away from zero, for $t \in [0,T)$.

\begin{lemma} \label{lemmavolbound}
There exists a constant $C$ depending only on the initial data such that $$|\varphi(t)|\leq C, \quad \omega^n(t) \le C \Omega.$$

\end{lemma}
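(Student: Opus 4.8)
The plan is to prove both estimates by the maximum principle applied to the parabolic Monge-Amp\`ere equation (\ref{flow2}), resting on two geometric inputs that I would record first. Since $\chi = \pi^* \ofs$ is pulled back from $\mathbb{P}^{n-1}$ we have $\chi^n = 0$, so expanding (\ref{ref2}) gives
\[
\hat{\omega}_t^n = \sum_{j=1}^n \binom{n}{j} a_t^{n-j} (T-t)^j \, \chi^{n-j}\wedge \theta^j ,
\]
a sum of nonnegative $(n,n)$-forms whose lowest-order term is $n a_t^{n-1}(T-t)\,\chi^{n-1}\wedge\theta$. By Lemma \ref{theta} the smooth form $\chi^{n-1}\wedge\theta$ is strictly positive, so on the compact manifold $M$ there are constants $0<c_0\le C_0$ with $c_0\Omega \le \chi^{n-1}\wedge\theta \le C_0\Omega$. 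Combined with the fact that $a_t = a_0+(k-n)t$ is bounded above and (using $k\ge n$) bounded below by $a_0>0$ on $[0,T)$, this yields the two-sided comparison $c(T-t)\Omega \le \hat{\omega}_t^n \le C(T-t)\Omega$. This is the only place the hypothesis $k\ge n$ enters, exactly as anticipated in the remark preceding the lemma.

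For the $C^0$ bound on $\varphi$ I would apply the maximum principle in space. At a point where $\varphi(\cdot,t)$ attains its spatial maximum we have $\ddbar\varphi\le 0$, hence $\omega(t)\le \hat{\omega}_t$ and $\omega^n\le \hat{\omega}_t^n \le C(T-t)\Omega$, so by (\ref{flow2}) the spatial maximum of $\ddt{\varphi}$ there is $\le \log C$. At a spatial minimum $\ddbar\varphi\ge 0$ gives $\omega^n\ge \hat{\omega}_t^n \ge c(T-t)\Omega$, so $\ddt{\varphi}\ge \log c$ there. Since $T=(b_0-a_0)/2k$ is finite, integrating these one-sided bounds for $\max_x\varphi$ and $\min_x\varphi$ over $[0,T)$ produces $|\varphi(t)|\le C$.

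For the volume bound $\omega^n\le C\Omega$ it suffices to bound $F:=\log(\omega^n/\Omega)=\ddt{\varphi}+\log(T-t)$ from above. Writing $\Delta=\tr_{\omega(t)}\ddbar$ for the Laplacian of the evolving metric, a direct computation gives $(\ddt{}-\Delta)\ddt{\varphi} = \tr_\omega(\ddt{\hat{\omega}_t}) + \tfrac{1}{T-t}$, and since $\ddt{\hat{\omega}_t}=(k-n)\chi-\theta$ the troublesome $\tfrac{1}{T-t}$ term cancels in $F$:
\[
(\ddt{}-\Delta)F = (k-n)\tr_\omega\chi - \tr_\omega\theta .
\]
I would then consider $G:=F-A\varphi$ and use $(\ddt{}-\Delta)\varphi = \ddt{\varphi}-n+\tr_\omega\hat{\omega}_t$ together with $\tr_\omega\hat{\omega}_t \ge a_t\,\tr_\omega\chi \ge a_0\,\tr_\omega\chi$. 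Choosing $A\ge (k-n)/a_0$ makes the coefficient of $\tr_\omega\chi$ nonpositive, so all trace terms drop out and one obtains $(\ddt{}-\Delta)G \le -A\,\ddt{\varphi}+An$. Substituting $\ddt{\varphi}=G+A\varphi-\log(T-t)$ and using $|\varphi|\le C$ and $\log(T-t)\le\log^+T$ gives $(\ddt{}-\Delta)G \le -AG + C'$, whence the maximum principle forces $G\le \max(\max_M G|_{t=0},\,C'/A)$. Then $F=G+A\varphi$ is bounded above, i.e. $\omega^n\le C\Omega$.

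The main obstacle is the simultaneous degeneration of the reference metric, $\hat{\omega}_t\to a_T\chi$ which is only semipositive, and the blow-up of $\tfrac{1}{T-t}$ in the volume form; the two devices that resolve it are the passage to $F=\ddt{\varphi}+\log(T-t)$, which cancels the singular time term, and the absorption of the nonnegative ``wrong-sign'' term $(k-n)\tr_\omega\chi$ into $-A\,\tr_\omega\hat{\omega}_t$, which is precisely why a uniform positive lower bound on $a_t$ (equivalently $k\ge n$) is needed.
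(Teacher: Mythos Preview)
Your proposal is correct and follows essentially the same route as the paper. Your quantity $G=F-A\varphi=\log(\omega^n/\Omega)-A\varphi$ is exactly the paper's $H$, and the evolution inequality and absorption of $(k-n)\tr_\omega\chi$ into $-A\,\tr_\omega\hat\omega_t$ via the lower bound on $a_t$ match line for line; the only cosmetic differences are that you isolate $F=\dot\varphi+\log(T-t)$ first to display the $\tfrac{1}{T-t}$ cancellation explicitly, and for the $C^0$ bound on $\varphi$ you integrate the one-sided bounds on $\partial_t\varphi$ at spatial extrema rather than subtracting a linear-in-$t$ function and applying the space-time maximum principle as the paper does.
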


\begin{proof}

By Lemma \ref{theta}, since $$\hat{\omega}_t^n= (a_t \chi + (T-t)\theta )^n \ge n(T-t) a_t^{n-1} \chi^{n-1} \wedge   \theta,$$
there exist constants $C_1, C_2>0$ independent of $t$ such that
\begin{equation} \label{eqnvolref}
C_1(T-t) \Omega \le \hat{\omega}_t^n  \le C_2(T-t) \Omega.
\end{equation}
Note that we are making use of the fact that $a_t$ is bounded from below away from zero.
To obtain the upper bound of $\varphi$, consider the evolution of $\psi= \varphi - (1+\log C_2)t$.  We claim that $\sup_{M \times [0,T)} \psi = \sup_M \psi|_{t=0}$.  Otherwise there exists a point  $(x,t)  \in M \times (0,T)$ at which $\partial \psi/\partial t \ge 0$ and $\ddbar \psi \le 0$.  Thus, at that point,
$$0\le \frac{\partial \psi}{\partial t} \le \log \frac{\hat{\omega}_t^n}{(T-t)\Omega} - 1 - \log C_2  \le -1,$$ a contradiction.
Hence $\sup_{M \times [0,T)} \psi = \sup_M \psi|_{t=0}$ and thus $\varphi$ is uniformly bounded from above.
A lower bound on $\varphi$ is obtained similarly.

For the upper bound of $\omega^n$, we will bound $H= \log \frac{\omega^n}{\Omega} - A \varphi$, where $A$ is a constant to be determined later.  Writing $\tr_{\omega}{\omega'} = n\frac{ \omega^{n-1} \wedge \omega'}{\omega^n}$ where $\omega'$ is any $(1,1)$-form, we compute
\begin{eqnarray*}
\frac{\partial}{\partial t} H & = & \Delta \dot{\varphi} + \tr_{\omega} \left( {\ddt{} \hat{\omega}_t} \right) - A \dot{\varphi},
\end{eqnarray*}
where $\Delta$ denotes the Laplace operator associated to $g(t)$.
Since $\dot{\varphi} = H + A\varphi - \log (T-t)$ and $\theta \ge 0$, we have
\begin{eqnarray*}
\frac{\partial}{\partial t} H & = & \Delta H + A \Delta \varphi + (k-n) \tr_{\omega} \chi - \tr_{\omega} \theta - A(H+A \varphi - \log (T-t)) \\
& \le & \Delta H + An - A a_t \tr_{\omega} \chi + (k-n) \tr_{\omega} \chi - AH - A^2 \varphi + A \log T.
\end{eqnarray*}
Choosing $A$ sufficiently large so that $A a_t \ge (k-n)$ and using the fact that $\varphi$ is uniformly bounded, we see that $H$ is bounded from above by the maximum principle.
\qed
\end{proof}

We have, in addition, the following estimate.

\begin{lemma} \label{trchi} There exists a uniform constant $C>0$ such that
$$\tr_{\omega} \chi = n\frac{\omega^{n-1} \wedge \chi}{\omega^n} \leq C.$$
\end{lemma}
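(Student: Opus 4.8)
The plan is to control $\tr_\omega\chi$ by a parabolic Schwarz lemma combined with the maximum principle, the point being that $\chi=\pi^*\ofs$ is the pullback of the Fubini–Study metric on $\mathbb{P}^{n-1}$, whose holomorphic bisectional curvature is bounded above by a fixed constant $C_0$. The standard computation (Yau's Schwarz lemma in its parabolic form) then gives, along the flow, $(\partial_t-\Delta)\log\tr_\omega\chi\le C_0\tr_\omega\chi$, where $\Delta$ is the Laplacian of $g(t)$. By itself this only controls the unfavorable direction, so I would look for an auxiliary function that supplies a coercive $-\tr_\omega\chi$ term.

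The naive choice $\log\tr_\omega\chi-A\varphi$ does produce coercivity: since $\Delta\varphi=n-\tr_\omega\hat{\omega}_t$ and $\hat{\omega}_t\ge a_t\chi$, one gets $-A\tr_\omega\hat{\omega}_t\le -Aa_t\tr_\omega\chi$ with $a_t\ge a_0>0$ (this, and only this, is where $k\ge n$ is used). The trouble is the extra term $-A\dot{\varphi}$, and $\dot{\varphi}$ has no constant lower bound in the collapsing regime. The remedy I would use is the weighted combination $W:=(T-t)\dot{\varphi}+\varphi$. Differentiating the flow equation gives $(\partial_t-\Delta)\dot{\varphi}=(k-n)\tr_\omega\chi-\tr_\omega\theta+(T-t)^{-1}$ (using $\partial_t\hat{\omega}_t=(k-n)\chi-\theta$), while $\Delta\varphi=n-a_t\tr_\omega\chi-(T-t)\tr_\omega\theta$. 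In $(\partial_t-\Delta)W$ the uncontrolled $\tr_\omega\theta$ terms cancel exactly, leaving the clean identity $(\partial_t-\Delta)W=[(T-t)(k-n)+a_t]\,\tr_\omega\chi+(1-n)$, whose coefficient of $\tr_\omega\chi$ is $\ge a_0>0$.

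Next I would show $|W|\le C$. The upper bound follows from Lemma \ref{lemmavolbound}: since $\omega^n\le C\Omega$ we have $(T-t)\dot{\varphi}\le (T-t)\log\tfrac{C}{T-t}$, which is bounded because $(T-t)\log\tfrac{1}{T-t}$ is bounded, and $\varphi$ is bounded. The lower bound comes from the minimum principle applied to the subsolution $W-(1-n)t$, whose $\tr_\omega\chi$-coefficient is nonnegative (again $k\ge n$), yielding $\min_M W(\cdot,t)\ge (1-n)t+\min_M W(\cdot,0)\ge -C$.

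Finally I would combine the two differential inequalities. For $\lambda<0$ chosen so that $C_0+\lambda a_0\le -1$, set $P=\log\tr_\omega\chi+\lambda W$; then $(\partial_t-\Delta)P\le (C_0+\lambda a_0)\tr_\omega\chi+|\lambda|(n-1)\le -\tr_\omega\chi+|\lambda|(n-1)$, so at a space–time maximum of $P$ we obtain $\tr_\omega\chi\le|\lambda|(n-1)$. Hence $P$ is bounded above, and since $|W|\le C$, so is $\log\tr_\omega\chi=P-\lambda W$, giving $\tr_\omega\chi\le C$. The main obstacle is precisely the $\dot{\varphi}$-term produced by the coercive potential; the crux of the argument is recognizing that the weighted quantity $W=(T-t)\dot{\varphi}+\varphi$ simultaneously cancels the uncontrolled $\tr_\omega\theta$ and is two-sided bounded, which is what converts the Schwarz inequality into a usable maximum-principle estimate.
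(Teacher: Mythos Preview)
Your argument is correct, but it is genuinely different from the paper's. The paper uses the simpler test function $L=\log w - A\varphi$ directly. You worry that $-A\dot\varphi$ has no lower bound since $\dot\varphi\to -\infty$; the paper sidesteps this not by bounding $\dot\varphi$ but by observing that $-A\dot\varphi = A\log\bigl((T-t)\Omega/\omega^n\bigr)$ and then splitting $A\,\tr_\omega\hat\omega_t$ into two pieces: one piece uses $\tr_\omega\hat\omega_t\ge a_t w$ (with $a_t\ge c>0$) to absorb the Schwarz term $2w$, while the other uses the AM--GM inequality $\tr_\omega\hat\omega_t\ge n(\hat\omega_t^n/\omega^n)^{1/n}\ge c\mu^{1/n}$ with $\mu=(T-t)\Omega/\omega^n$. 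The upshot is $(\partial_t-\Delta)L\le -w + A\log\mu - c\mu^{1/n}+C$, and since $\mu\mapsto A\log\mu-c\mu^{1/n}$ is bounded above, the maximum principle closes immediately. Your route, by contrast, introduces the auxiliary quantity $W=(T-t)\dot\varphi+\varphi$, verifies the identity $(\partial_t-\Delta)W=[(T-t)(k-n)+a_t]\,\tr_\omega\chi+(1-n)$ (note the coefficient equals the constant $T(k-n)+a_0\ge a_0$), proves a separate two-sided bound on $W$, and only then combines with the Schwarz inequality. Both arguments hinge on the same fact $a_t\ge a_0>0$; the paper's is shorter and avoids the extra minimum-principle step, while yours isolates a clean evolution identity for $W$ that has independent interest and is closer in spirit to the Tian--Zhang style estimates used elsewhere in the paper.
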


\begin{proof} This is a `parabolic Schwarz lemma' similar to the one given in \cite{SoT1}.  We use the
 maximum principle.
 Let $\ofs = \frac{\sqrt{-1}}{2\pi} h_{\alpha \ov{\beta}} dz^{\alpha} \wedge d\ov{z}^{\beta}$ be the Fubini-Study metric on $\mathbb{P}^{n-1}$ and let $\pi: M \rightarrow \mathbb{P}^{n-1}$ be the bundle map.
We will calculate the evolution of
\begin{equation}
w=\tr_{g}(\pi^*h)=g^{i\overline{j}}\pi^{\alpha}_i
\pi^{\overline{\beta}}_{\overline{j}}h_{\alpha\overline{\beta}} = n \frac{\omega^{n-1} \wedge \chi}{\omega^n}.
\end{equation}
A standard computation shows that
\begin{eqnarray}\label{sch} \nonumber
\Delta w &=& g^{k\overline{l}}\partial_k
\partial_{\overline{l}} \left( g^{i\overline{j}}\pi^{\alpha}_i
\pi^{\overline{\beta}}_{\overline{j}}h_{\alpha\overline{\beta}} \right)\\
&=&g^{i\overline{l}}g^{k\overline{j}}R_{k\overline{l}}
\pi^{\alpha}_{i}
\pi^{\overline{\beta}}_{\overline{j}}h_{\alpha\overline{\beta}}+
g^{i\overline{j}}g^{k\overline{l}}\pi^{\alpha}_{i,k}\pi^{\overline{\beta}}_{\overline{j},\overline{l}}h_{\alpha\overline{\beta}}
-g^{i\overline{j}}g^{k\overline{l}}S_{\alpha\overline{\beta}
\ga\overline{\de}} \pi^{\alpha}_i
\pi^{\overline{\beta}}_{\overline{j}}\pi^{\ga}_k
\pi^{\overline{\de}}_{\overline{l}},
\end{eqnarray}
where $S_{\alpha\overline{\beta} \ga\overline{\de}}$ is the
curvature tensor of $h_{\alpha\bar{\beta}}$.
By the definition of $w$ we have
\begin{eqnarray} \label{deltaw}
\Delta w \ge g^{i\overline{l}}g^{k\overline{j}}R_{k\overline{l}}
\pi^{\alpha}_i
\pi^{\overline{\beta}}_{\overline{j}}h_{\alpha\overline{\beta}} +
g^{i\overline{j}}g^{k\overline{l}}\pi^{\alpha}_{i,k}\pi^{\overline{\beta}}_{\overline{j},\overline{l}}h_{\alpha\overline{\beta}}
  - 2w^2.
\end{eqnarray}
Now
\begin{eqnarray} \nonumber
\ddt{w}&=&-g^{i\overline{l}}g^{k\overline{j}}\ddt{g_{k\overline{l}}}
\pi^{\alpha}_i
\pi^{\overline{\beta}}_{\overline{j}} h_{\alpha\overline{\beta}}\\ \label{dwdt}
&=&g^{i\overline{l}}g^{k\overline{j}}R_{k\overline{l}}
\pi^{\alpha}_i
\pi^{\overline{\beta}}_{\overline{j}}h_{\alpha\overline{\beta}}.
\end{eqnarray}
Combining (\ref{deltaw}) and (\ref{dwdt}),
we have
\begin{equation}
(\ddt{} - \Delta) w \le - g^{i\overline{j}}g^{k\overline{l}}\pi^{\alpha}_{i,k}\pi^{\overline{\beta}}_{\overline{j},\overline{l}}h_{\alpha\overline{\beta}}+ 2w^2.
\end{equation}
  On the other hand, by a standard argument (see \cite{Y1} for example)
\begin{equation}
\frac{|\nabla w|^2}{w} \le g^{i\overline{j}}g^{k\overline{l}}\pi^{\alpha}_{i,k}\pi^{\overline{\beta}}_{\overline{j},\overline{l}}h_{\alpha\overline{\beta}}
\end{equation}
and thus
\begin{equation} \label{logw}
(\ddt{}- \Delta) \log w \leq 2w.
\end{equation}

We now compute the evolution of the quantity  $ L  = \log w - A \varphi$, where $A$ is a constant to be determined later. From the arithmetic-geometric means inequality $$\frac{ \lambda_1 + \cdots + \lambda_n}{n} \ge ( \lambda_1 \cdots \lambda_n)^{1/n} , \quad \textrm{for  } \ \lambda_1, \ldots,  \lambda_n \ge 0,$$
and (\ref{eqnvolref})
we have,
\begin{eqnarray} \label{gam}
\tr_{\omega} \hat{\omega}_t \ge n
  \left( \frac{\hat{\omega}_t^n}{\omega^n} \right)^{1/n}
 \ge  c \left( \frac{(T-t) \Omega}{\omega^n}\right)^{1/n},
\end{eqnarray}
for a uniform constant $c>0$. Then, using the inequality $\tr_{\omega} \hat{\omega}_t \ge a_t w$ together with (\ref{logw}) and (\ref{gam}),
\begin{eqnarray*}
 \left( \ddt{}-\Delta\right) L
&\leq& 2w -A \log \left( \frac{\omega^n}{(T-t)\Omega} \right) + An - A \tr_{\omega} \hat{\omega}_t \\
&\leq& -w + A \log \left( \frac{(T-t)\Omega}{\omega^n} \right) + An - c  \left( \frac{(T-t) \Omega}{\omega^n}\right)^{1/n},
\end{eqnarray*}
where we have chosen $A$ sufficiently large so that $(A-1)a_t \ge 3$.
Note that the function $\mu \mapsto A\log \mu - c\mu^{1/n}$ for $\mu >0$ is uniformly bounded from above.
Hence if the maximum of $L$ is achieved at a point $(x_0, t_0) \in M \times (0,T)$  then, at that point, $w$ is uniformly bounded from above.
Since we have already shown in Lemma \ref{lemmavolbound} that $\varphi$ is uniformly bounded along the flow, the required upper bound of $w$ follows by the maximum principle.
\qed
\end{proof}

\subsection{The case $1 \le k \le n-1$} \label{sectionKRk1}

There are three distinct types of behavior here which depend on the choice of initial K\"ahler class $\alpha_0$.  We deal with each in turn.

\subsubsection{The subcase $a_0(n+k) = b_0(n-k)$.} \label{subcaseperelman}

In this case $\alpha_0 = (a_0/(n-k))c_1(M)$ and the class $\alpha_t = (a_0/(n-k)-t) c_1(M)$ is proportional to the first Chern class.  After renormalizing, this is the K\"ahler-Ricci flow (\ref{KRF1}) with $\lambda=1$ on a  manifold with $c_1(M)>0$.  It is shown in \cite{FIK} that $M$  admits a K\"ahler-Ricci soliton and we refer the reader to the results of \cite{TZhu} and also \cite{PSSW3}.  For our purposes, we only need the fact   that the diameter of the metric is bounded along the normalized K\"ahler-Ricci flow  \cite{P1}, \cite{SeT}.

\subsubsection{The subcase $a_0(n+k) > b_0(n-k)$.}

In this case the K\"ahler class $\alpha_t$ remains K\"ahler until time $T= (b_0-a_0)/2k$.  We have $\lim_{t \rightarrow T} b_t = \lim_{t \rightarrow T} a_t>0$ as in the case $k\ge n$.  Lemmas \ref{lemmavolbound} and \ref{trchi} hold with the same proofs in this case since, as pointed out in Section \ref{kgen}, we only used there the fact that $a_t$ is uniformly bounded from below away from zero.

\subsubsection{The subcase $a_0(n+k) < b_0(n-k)$.}

In this case the K\"ahler class $\alpha_t$ remains K\"ahler until time $T=a_0/(n-k)$.   As $t \rightarrow T$, $a_t$ tends to zero while $b_t$ remains bounded below away from zero.
  The metrics $\hat{\omega}_t$ and classes $\alpha_t$ satisfy the following properties for all $t \in [0,T)$:
\begin{enumerate}
\item[(1)]
 The limit $\displaystyle{\hat{\omega}_T = \lim_{t \rightarrow T} \hat{\omega}_t}$ is a smooth closed nonnegative (1,1) form satisfying
$\int_M \hat{\omega}_T^n >0.$
\item[(2)] For all $\varepsilon>0$ sufficiently small (independent of $t$), the class $\alpha_t - \varepsilon [D_0]$ is K\"ahler.
\end{enumerate}

Indeed, (1)  follows from Lemma \ref{theta} and (2) is immediate from the definition of $\alpha_t$.
We will use these properties to prove the following (cf. \cite{Ts}, \cite{TZha}, \cite{Zha}).

\begin{theorem} \label{theoremTZ} Let $\omega(t)$ be a solution of the K\"ahler-Ricci flow on $M$ starting at $\omega_0$ in $\alpha_0$ with $a_0(n+k) < b_0(n-k)$.  Then:
\begin{enumerate}
\item[(i)] If $\Omega$ is a fixed volume form on $M$ then there exists a uniform constant $C$ such that
$$\omega^n(t) \le C\Omega, \quad \emph{for all } t \in [0,T).$$
\item[(ii)]
There exists a closed semi-positive (1,1) current $\omega_T$ on $M$ which is smooth outside the exceptional curve $D_0$ and has an $L^\infty$-bounded local potential such that following holds.  The metric $\omega(t)$ along the K\"ahler-Ricci flow converges in $C^{\infty}$ on compact subsets of $M\setminus D_0$ to $\omega_T$ as $t \rightarrow T$.
\end{enumerate}
\end{theorem}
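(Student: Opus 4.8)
The plan is to recast the flow as a parabolic complex Monge-Amp\`ere equation relative to the reference family $\hat\omega_t$ of (\ref{ref1}) and a \emph{fixed} smooth volume form $\Omega$. Exactly as in Section \ref{kgen}, using (\ref{canonical}) I would choose $\Omega$ with $\ddbar \log \Omega = -\theta + (k-n)\chi$, so that $\partial_t\hat\omega_t = (k-n)\chi - \theta = \ddbar\log\Omega$, and writing $\omega(t) = \hat\omega_t + \ddbar\varphi$ the flow (\ref{KRF0}) becomes
$$\dot\varphi = \log\frac{(\hat\omega_t + \ddbar\varphi)^n}{\Omega}, \qquad \varphi|_{t=0} = \varphi_0.$$
The key structural difference from Section \ref{kgen} is that here $\Omega$ carries no vanishing factor: by property (1) the limit $\hat\omega_T = (b_T/2k)\,\theta$ satisfies $\int_M\hat\omega_T^n > 0$ (Lemma \ref{theta}), and the degeneration is concentrated along $D_0$, where $a_T = 0$ forces $\hat\omega_T$ to vanish.

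For (i) I would differentiate the equation to obtain $(\partial_t - \Delta)\dot\varphi = \tr_\omega(\partial_t\hat\omega_t) = (k-n)\tr_\omega\chi - \tr_\omega\theta$, where $\Delta$ is the Laplacian of $g(t)$. Since $1\le k\le n-1$ gives $k - n <0$, while $\chi \ge 0$ and $\theta\ge 0$ give $\tr_\omega\chi,\tr_\omega\theta \ge 0$, the right-hand side is $\le 0$. The maximum principle then shows $\sup_M\dot\varphi(\cdot,t)$ is non-increasing in $t$, so $\dot\varphi \le \sup_M\log(\omega_0^n/\Omega)$ and hence $\omega^n = e^{\dot\varphi}\Omega \le C\Omega$ throughout $[0,T)$. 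This is the one place where the hypothesis $k\le n-1$ enters decisively.

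For (ii) I would first establish a two-sided $C^0$ bound on $\varphi$. The upper bound is the maximum-principle argument of Lemma \ref{lemmavolbound}: at an interior spatial maximum $\ddbar\varphi\le 0$, so $\omega^n\le\hat\omega_t^n\le C\Omega$ (the latter because $\hat\omega_t\to\hat\omega_T$ smoothly), giving $\dot\varphi\le C$ and $\varphi\le C$. The lower bound is the heart of the theorem and the main obstacle. Here I would use properties (1) and (2): the limiting class $\alpha_T=(b_T/k)[D_\infty]$ has positive volume and is semi-ample — it is the pullback under the contraction $f$ of Section \ref{orbifold} of an ample orbifold class on $\mathbb{P}^n/\mathbb{Z}_k$ — while property (2) realizes $\alpha_t$ as an interior approximation of a big class. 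Following \cite{TZha} and \cite{Zha}, and feeding in the volume bound $\omega^n\le C\Omega$ from (i) together with the bounded total mass $\int_M\omega^n = \int_M\alpha_t^n \to \int_M\alpha_T^n>0$, the Ko{\l}odziej-type $L^\infty$ estimates for degenerate complex Monge-Amp\`ere equations yield a uniform bound $\inf_M\varphi\ge -C$. The nonvanishing of $\int_M\hat\omega_T^n$ and the bigness of $\alpha_T$ are exactly what make this estimate possible.

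Finally I would upgrade to smooth convergence away from $D_0$. On a compact set $K\subset M\setminus D_0$ the forms $\hat\omega_t$ converge to the genuine K\"ahler metric $\hat\omega_T=(b_T/2k)\theta$, which is nondegenerate there, so $\hat\omega_t$ is uniformly comparable to a fixed metric near $K$. A localized second-order estimate — the parabolic Schwarz/Aubin-Yau computation of Lemma \ref{trchi} carried out with a cutoff supported near $K$ and using the $C^0$ and volume bounds — gives an upper bound $\tr_{\hat\omega_T}\omega\le C$ on $K$; combined with $\omega^n\le C\Omega$ and the maximum-principle control of the volume ratio from \cite{TZha}, \cite{Zha}, this produces two-sided equivalence $C^{-1}\hat\omega_T\le\omega(t)\le C\hat\omega_T$ on $K$. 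Once $\omega$ and $\hat\omega_T$ are uniformly equivalent on compact subsets, the Evans-Krylov and Schauder estimates bootstrap to uniform $C^\infty$ bounds on compact subsets of $M\setminus D_0$, and the family $\omega(t)$ converges there to a limit $\omega_T=\hat\omega_T+\ddbar\varphi_T$. The global $C^0$ bound shows $\varphi_T$ extends to a bounded $\hat\omega_T$-plurisubharmonic function on all of $M$, so $\omega_T$ is a closed positive $(1,1)$-current with $L^\infty$ local potential, smooth off $D_0$. The unavoidable feature is that every estimate beyond the $C^0$ bound degenerates as $K$ approaches $D_0$ — the reference metric itself collapses along $D_0$ — which is precisely why convergence can only hold on compact subsets of $M\setminus D_0$.
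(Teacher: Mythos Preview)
Your argument is essentially correct and follows the same overall architecture as the paper --- recast as a parabolic Monge--Amp\`ere equation, bound $\dot\varphi$ from above, invoke Ko{\l}odziej--Zhang--EGZ for the $C^0$ bound, then Tsuji's second-order estimate away from $D_0$ --- but with two noteworthy differences.

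First, your proof of (i) is genuinely simpler than the paper's. You observe directly that $(\partial_t-\Delta)\dot\varphi=(k-n)\tr_\omega\chi-\tr_\omega\theta\le 0$ because $k\le n-1$ forces $k-n<0$, so $\sup_M\dot\varphi$ is non-increasing. The paper instead computes $(\partial_t-\Delta)(t\dot\varphi-\varphi-nt)=-\tr_\omega\hat\omega_0\le 0$, which yields $\dot\varphi\le C$ only for $t\ge T/2$ but has the advantage of not using the sign of $k-n$ (it is the Tian--Zhang argument, valid whenever the limiting class is big). Your shortcut is perfectly legitimate in this subcase and is a nice use of the hypothesis.

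Second, your description of the $C^2$ estimate on compact $K\subset M\setminus D_0$ is slightly off. You refer to ``the parabolic Schwarz/Aubin--Yau computation of Lemma~\ref{trchi} carried out with a cutoff supported near $K$.'' Lemma~\ref{trchi} is a Schwarz-lemma bound on $\tr_\omega\chi$, whereas what is needed here is the Aubin--Yau bound on $\tr_{\hat g}g$; more importantly, Tsuji's argument does not use a cutoff (whose Laplacian has no sign) but rather a logarithmic barrier $\varepsilon\log|\sigma|_h^2$ for a holomorphic section $\sigma$ of $[D_0]$, which is exactly what property~(2) supplies. One applies the maximum principle to $\log\tr_{\hat g_0}g - A\varphi + \varepsilon\log|\sigma|_h^2$; the barrier forces the maximum away from $D_0$, and the $C^0$ bound on $\varphi$ then controls $\tr_{\hat g_0}g$ on compact subsets. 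The rest of your outline (Evans--Krylov, Schauder, extension of $\varphi_T$ as a bounded psh potential) is correct.
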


\begin{proof}  This result is essentially contained in \cite{TZha}, \cite{Zha}, but we will include an outline of the proof for the reader's convenience.  First, define a closed (1,1) form $\eta$ by
$$\eta = \ddt{} \hat{\omega}_t = (k-n) \chi - \theta,$$
so that the reference metric $\hat{\omega}_t$ is given by
 $\hat{\omega}_t = \hat{\omega}_0 + t \eta$. Since $-\eta$ is in $c_1(M)$ there exists a volume form $\Omega$ on $M$ with $\ddbar\log \Omega = \eta$.
Let $\varphi=\varphi(t)$ be a solution of the parabolic Monge-Amp\`ere equation
\begin{equation} \label{pma}
 \ddt{\varphi} = \log \frac{ (\hat{\omega}_t + \frac{\sqrt{-1}}{2\pi} \partial\ov{\partial} \varphi)^n}{\Omega},
\end{equation}
with $\varphi|_{t=0} = 0$, for $t \in [0,T)$.   Then $\omega(t) = \hat{\omega} +\frac{\sqrt{-1}}{2\pi} \partial\ov{\partial} \varphi$ solves the K\"ahler-Ricci flow (\ref{KRF0}).  We will first bound $\varphi$.
Notice that $\varphi$ is uniformly bounded from above by a simple maximum principle argument. To obtain a uniform $L^{\infty}$ bound on $\varphi$ we will show that $\dot{\varphi}$ is uniformly bounded from above for $t \in [T/2, T)$.
Compute
 \begin{equation}
 (\ddt{} - \Delta) \dot{\varphi} = \textrm{tr}_{\omega} \eta.
 \end{equation}
 Then
 \begin{equation}
 (\ddt{} - \Delta)(t \dot{\varphi}- \varphi -nt) =  - \textrm{tr}_{\omega} \hat{\omega}_0 \le 0,
 \end{equation}
 using the fact that $\Delta \varphi = n- \textrm{tr}_{\omega}(\hat{\omega}_0 + t \eta)$.
 It follows from the maximum principle that $t \dot{\varphi}$ is uniformly bounded from above.    Hence $\dot{\varphi}$ is uniformly bounded from above for $t$ in $[T/2,T)$.

Rewrite (\ref{pma}) as
 \begin{equation} \label{ma}
 (\hat{\omega}_t + \ddbar \varphi)^n = e^{\dot{\varphi}} \Omega.
 \end{equation}
By property (1) above,  the K\"ahler metric $\hat{\omega}_t$ satisfies $\int_M \hat{\omega}_t^n >c>0$ for a uniform constant $c$  for all $t \in [T/2, T)$ and the limit $\hat{\omega}_T$ is a smooth nonnegative (1,1) form.
Hence we can  apply the results of \cite{Kol}, \cite{Zha}, \cite{EGZ} on the complex Monge-Amp\`ere equation to obtain an $L^{\infty}$ bound on $\varphi$ which is uniform in $t$.  Note that part (i) of the Theorem follows from the upper bound of $\dot{\varphi}$.  For (ii), we use property (2) of $\alpha_t$ as listed above.  The argument of Tsuji \cite{Ts} (cf. \cite{Y1}) gives  second order estimates for $\varphi$, depending on the $L^{\infty}$ estimate, on all compact sets of $M \setminus D_0$.  The rest of the theorem follows by standard theory. \qed
\end{proof}

\section{Estimates under the Calabi symmetry condition} \label{sectioncalabi}

In this section we assume that the initial metric $\omega_0$ satisfies the symmetry condition of Calabi.  We prove estimates for the solution of the K\"ahler-Ricci flow and in particular we give a proof of Theorem \ref{thm2} (see Theorem \ref{gest} below.)

Let $\omega(t)$ be a solution of the K\"ahler-Ricci flow (\ref{KRF0}) on a time interval $[0,T)$.  The K\"ahler-Ricci flow can be described in terms of the potential $u = u(\rho,t)$.  Noting that $\omega$ determines $u$ only up to the addition of a constant, we consider $u=u(\rho, t)$ solving
\begin{equation}\label{uKRF}
\ddt{} u(\rho, t) =  \log u''(\rho,t) + (n-1) \log u'(\rho,t) -n\rho + c_t,
\end{equation}
where
\begin{equation} \label{ct}
c_t = - \log u''(0,t) - (n-1) u'(0,t).
\end{equation}
The notation $u'(\rho,t)$ denotes the partial derivative $(\partial u/\partial \rho) (\rho,t)$ and similarly for $u''(\rho,t)$.
We assume that $\rho \mapsto u(\rho,0)$ represents the initial K\"ahler metric $g_0$ and we impose the further normalization condition that
\begin{equation} \label{ic}
u(0,0)=0.
\end{equation}
Then by (\ref{Riccipotential}), the solution $g=g(t)$ of (\ref{KRF0}) can be written as $g_{i \ov{j}} = \partial_i \partial_{\ov{j}} u$.
  The constant $c_t$ is chosen so that $\ddt{} u(0,t)=0$ and
thus  $u(0,t)=0$ on $[0,T)$.  The existence of a unique solution of the K\"ahler-Ricci flow on $[0,T)$ and the parabolicity of (\ref{uKRF}) ensures the existence of a smooth unique $u(\rho,t)$ solving (\ref{uKRF}).

The K\"ahler metric at time $t$ lies in the cohomology class $$\alpha_t = \frac{b_t}{k}[D_{\infty}] - \frac{a_t}{k} [D_0]$$ along the flow, where $a_t$ and $b_t$ are given by (\ref{btat}). We have
$$\lim_{\rho \rightarrow -\infty} u'(\rho,t) = a_t, \quad \lim_{\rho \rightarrow \infty} u'(\rho, t) = b_t$$
and by convexity
$$a_t < u'(\rho, t) < b_t, \quad \textrm{for all } \rho \in \mathbb{R}.$$
Next,  the evolution equations for $u'$, $u''$ and $u'''$ are given by
\begin{eqnarray} \label{upevolution}
\ddt{} u' & = & \frac{u'''}{u''} + \frac{(n-1) u''}{u'} -n \\ \label{udpevolution}
\ddt{} u'' & = & \frac{u^{(4)}}{u''} - \frac{(u''')^2}{(u'')^2} + \frac{(n-1)u'''}{u'} - \frac{(n-1) (u'')^2}{(u')^2} \\  \nonumber
\ddt{} u''' & =  & \frac{u^{(5)}}{u''} - \frac{3u''' u^{(4)}}{(u'')^2} + \frac{2(u''')^3}{(u'')^3}+ \frac{(n-1) u^{(4)}}{u'} \\  \label{utpevolution}
&& \mbox{} - \frac{3(n-1) u'' u'''}{(u')^2} + \frac{2(n-1) (u'')^3}{(u')^3},
\end{eqnarray}
as can be seen from differentiating (\ref{uKRF}).

For the rest of this section we assume that $u=u(\rho,t)$ solves the K\"ahler-Ricci flow (\ref{uKRF}) with  (\ref{ct}) and (\ref{ic}).

\subsection{The case $k \ge n$} \label{calabisubsectionk2}

The following elementary lemma shows that as $t \rightarrow T=(b_0-a_0)/2k$, the potential $u$ converges pointwise to the  function $a_T u_{\chi}$.

\begin{lemma}  \label{lemmapointwise}
The function $u=u(\rho,t)$ satisfies, for all $\rho$ in $\mathbb{R}$,
\begin{enumerate}
\item[(i)] $\displaystyle{0 < u'(\rho,t) - a_t < 2k(T-t)},$ for all $t \in [0,T)$;
\item[(ii)] $\displaystyle{\lim_{t \rightarrow T} (u(\rho,t)- a_T \rho) =0}$.
\end{enumerate}
\end{lemma}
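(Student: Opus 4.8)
The plan is to establish part (i) via the parabolic maximum principle applied to the evolution equation for $u'$, and then deduce part (ii) as a relatively soft consequence of (i) together with the normalization $u(0,t)=0$.

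**For part (i):** First I would recall from (\ref{upevolution}) that $w := u'$ satisfies
$$\ddt{} u' = \frac{u'''}{u''} + \frac{(n-1)u''}{u'} - n.$$
Let me reason about this carefully. I want to track the quantity $u'(\rho,t) - a_t$. Since $a_t = a_0 + (k-n)t$, we have $\partial_t a_t = k-n$, so
$$\ddt{}(u' - a_t) = \frac{u'''}{u''} + \frac{(n-1)u''}{u'} - n - (k-n) = \frac{u'''}{u''} + \frac{(n-1)u''}{u'} - k.$$
The lower bound $u' - a_t > 0$ is already given in the text by convexity (since $u'$ is strictly increasing in $\rho$ with infimum $a_t$), so the real content is the upper bound $u' - a_t < 2k(T-t)$. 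The idea is to compare against the spatially-constant supersolution $2k(T-t)$, whose time derivative is $-2k$. So I would consider $Q := u' - a_t - 2k(T-t)$ and aim to show $Q < 0$ throughout. At an interior spatial maximum of $Q$ in $\rho$ (for fixed $t$), one has $u'' \to 0$ behavior to contend with; the technical subtlety is that $u''$ appears in a denominator and the domain is all of $\mathbb{R}$, so I must control the boundary behavior as $\rho \to \pm\infty$.

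**The main obstacle** is precisely this: the equation is degenerate-looking (the term $u'''/u''$), and since $\rho$ ranges over the noncompact $\mathbb{R}$, a naive maximum principle does not apply directly — one must rule out the supremum escaping to $\rho = \pm\infty$. I expect the cleanest route is to use the asymptotic conditions (\ref{u0}): near $\rho \to \infty$ the function $u(\rho,t) - b_t\rho$ extends smoothly in the variable $e^{-k\rho}$, forcing $u'(\rho,t) \to b_t$ with $u'' \to 0$ at a controlled (exponential) rate, and symmetrically $u' \to a_t$ as $\rho \to -\infty$. These give $Q \to a_t - a_t - 2k(T-t) < 0$ and $Q \to b_t - a_t - 2k(T-t) = 0$ at the two ends (using $b_t - a_t = 2k(T-t)$). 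Thus $Q \le 0$ automatically at the spatial boundary, and I can run the maximum principle on the interior; at an interior max in $\rho$ one has $u''' = 0$ and $u'' \le 0$, but since $u'' > 0$ always the second-derivative test must be handled by examining the sign of $\ddt{} Q$ directly — I would show that if $Q$ ever reached $0$ from below, the evolution would push it back down. The strict inequality likely follows from a strong maximum principle or by perturbing the barrier to $2k(T-t) + \varepsilon e^{\lambda t}$.

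**For part (ii):** Once (i) holds, integrate in $\rho$. For fixed $\rho > 0$, using $u(0,t) = 0$ I write $u(\rho,t) = \int_0^\rho u'(s,t)\,ds$, and estimate
$$\left| u(\rho,t) - a_t\rho \right| = \left| \int_0^\rho (u'(s,t) - a_t)\,ds \right| \le 2k(T-t)\,|\rho|,$$
which tends to $0$ as $t \to T$; since $a_t \to a_T$, also $a_t \rho \to a_T\rho$, giving $u(\rho,t) - a_T\rho \to 0$. The case $\rho < 0$ is identical, and $\rho = 0$ is trivial by normalization. This step is routine; the genuine work is entirely in the barrier argument for (i).
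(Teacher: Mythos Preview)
Your treatment of part (ii) is correct and matches the paper exactly.

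For part (i), however, you have missed the elementary observation that makes the statement immediate and have instead embarked on an unnecessary (and, as sketched, somewhat confused) maximum-principle argument. Recall that $b_t - a_t = (b_0 - a_0) - 2kt = 2k(T-t)$. Since $u'' > 0$, the function $\rho \mapsto u'(\rho,t)$ is strictly increasing with limits $a_t$ and $b_t$ at $\rho = -\infty$ and $\rho = +\infty$ respectively; hence $a_t < u'(\rho,t) < b_t$, i.e.\ $0 < u'(\rho,t) - a_t < b_t - a_t = 2k(T-t)$. That is the entire proof of (i) in the paper, one line.

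Notice also that your maximum-principle sketch contains an error that, if corrected, collapses to this same observation: the spatial derivative of your $Q = u' - a_t - 2k(T-t)$ is $Q' = u''$, not $u'''$. Since $u'' > 0$ everywhere, $Q$ is \emph{strictly increasing} in $\rho$ and has no interior critical point; its supremum over $\rho$ is attained only in the limit $\rho \to +\infty$, where $Q \to b_t - a_t - 2k(T-t) = 0$. So there is nothing for the parabolic maximum principle to do here---the evolution equation (\ref{upevolution}) plays no role.
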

\begin{proof}
(i) follows immediately from the convexity of $u$ and the definition of $a_t$ and $b_t$.  For (ii), recall that $u(0,t)=0$ for all $t \in [0,T)$ and so
$$u(\rho,t)-a_t \rho = \int_0^{\rho} (u'(s,t) - a_t)ds$$
Applying (i),
$$\left| u(\rho,t)-a_t   \rho \right| \le 2k(T-t) | \rho | \longrightarrow 0,$$
as $t \rightarrow T$, while $a_t \rightarrow a_T$.  \qed
\end{proof}

As a simple application of this we prove a lower bound for the K\"ahler metric along the flow.

\begin{lemma} \label{lowerbd} Along the K\"ahler-Ricci flow, we have
$$\omega(t)  \ge a_t \chi$$
for all $t$ in $[0,T)$.
\end{lemma}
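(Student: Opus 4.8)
The plan is to work directly with the radial potential. Since $\chi$ is represented by the potential $u_\chi(\rho)=\rho$ (see (\ref{eqnchi})), the difference $\omega(t)-a_t\chi$ is the $(1,1)$-form $\frac{\sqrt{-1}}{2\pi}\partial\ov{\partial}w$ associated, on $M\setminus(D_0\cup D_\infty)$, to the potential $w(\rho)=u(\rho,t)-a_t\rho$, whose $\rho$-derivatives are $w'=u'-a_t$ and $w''=u''$. So the whole statement reduces to checking positivity of a form coming from this shifted potential.

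First I would record the general fact that for any radial potential $w=w(\rho)$, formula (\ref{metric}) produces the Hermitian matrix $e^{-\rho}w'\,\delta_{ij}+e^{-2\rho}\ov{x}_i x_j(w''-w')$. The rank-one matrix $\ov{x}_i x_j$ has a single nonzero eigenvalue $\sum_i|x_i|^2=e^{\rho}$ (with eigenvector $x$), so the eigenvalues of this matrix are $e^{-\rho}w'$ with multiplicity $n-1$ (in the directions orthogonal to $x$) and $e^{-\rho}w''$ with multiplicity one (in the direction of $x$). In particular, $\frac{\sqrt{-1}}{2\pi}\partial\ov{\partial}w$ is nonnegative on $M\setminus(D_0\cup D_\infty)$ precisely when $w'\ge 0$ and $w''\ge 0$.

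Applying this to $w=u(\cdot,t)-a_t\rho$, the two conditions read $u'-a_t\ge 0$ and $u''\ge 0$. The first is exactly Lemma \ref{lemmapointwise}(i), which gives $u'(\rho,t)-a_t>0$ for all $\rho\in\mathbb{R}$ and all $t\in[0,T)$, and the second is the strict convexity $u''>0$ of the K\"ahler potential. Hence $\omega(t)-a_t\chi>0$ on $M\setminus(D_0\cup D_\infty)$.

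Finally, to promote the inequality to all of $M$, I would observe that $\omega(t)-a_t\chi$ is a smooth closed $(1,1)$-form on the entire manifold $M$, since both $\omega(t)$ and $\chi=\pi^*\ofs$ extend smoothly across $D_0$ and $D_\infty$, and it is nonnegative on the dense open set $M\setminus(D_0\cup D_\infty)$; by continuity it is nonnegative everywhere, giving $\omega(t)\ge a_t\chi$ on $M$ for all $t\in[0,T)$. There is essentially no serious obstacle here: the only points needing a little care are the eigenvalue computation for the rank-one perturbation $\ov{x}_i x_j$, and the remark that the difference form extends smoothly across the two divisors, so the pointwise bound on $M\setminus(D_0\cup D_\infty)$ propagates to all of $M$.
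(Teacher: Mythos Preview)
Your proof is correct and follows essentially the same approach as the paper: both arguments use formula (\ref{metric}), together with $u'>a_t$ and $u''>0$, to conclude that $g_{i\bar j}-a_t\chi_{i\bar j}\ge 0$. The paper arrives at this by directly dropping the positive rank-one term $e^{-2\rho}\bar x_i x_j u''$ and then using that $\delta_{ij}-e^{-\rho}\bar x_i x_j$ is positive semidefinite to replace $u'$ by $a_t$, whereas you phrase it via the shifted potential $w=u-a_t\rho$ and an eigenvalue computation; these are the same argument in different clothing.
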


\begin{proof}
With a slight abuse of notation, we write $\chi = \frac{\sqrt{-1}}{2\pi} \chi_{i \ov{j}} dz^i \wedge d\ov{z}^j$.    Then by (\ref{metric}),
\begin{equation} \label{chi}
g_{i\bar{j}}(t) = e^{-\rho} u' \delta_{ij} + e^{-2\rho}  \ov{x}_i x_j (u''-u'), \quad \chi_{i \ov{j}} =  e^{-\rho} \delta_{ij} - e^{-2\rho} \ov{x}_i x_j.
\end{equation}
Since $u''>0$ and $u' >a_t$,
$$g_{i \ov{j}}(t) \ge u' e^{-\rho} \left( \delta_{ij} - \frac{\ov{x}_i x_j}{\sum_k |x_k|^2} \right) \ge a_t e^{-\rho} \left( \delta_{ij} - \frac{\ov{x}_i x_j}{\sum_k |x_k|^2} \right) = a_t \chi_{i \ov{j}},$$
as required.  \qed

\end{proof}

We have the following further estimates for $u$ which will give an upper bound for the metric $\omega(t)$.

\begin{lemma} \label{lemmaest1}  There exists a constant $C$ depending only on the initial data such that
\begin{equation} \label{udp}
0<u''(\rho, t)  \le C \min \left( \frac{e^{k \rho}}{(1+e^{k \rho})^2},  (T-t) \right)
\end{equation}
and
\begin{equation} \label{utp}
|u'''(\rho,t) | \le Cu''(\rho,t)
\end{equation}
for all $(\rho,t) \in \mathbb{R} \times [0,T)$.
\end{lemma}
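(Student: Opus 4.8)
The plan is to prove the positivity, the two-sided upper bound (\ref{udp}), and the gradient bound (\ref{utp}) by working entirely with the one-variable quantities $u', u''$, using repeatedly that the Laplacian of $g=g(t)$ acts on a radial function $f=f(\rho)$ by $\Delta f = f''/u'' + (n-1)f'/u'$.

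First, positivity $u''>0$ is the convexity built into the Calabi ansatz. For the bound $u''\le C\,e^{k\rho}/(1+e^{k\rho})^2$ I would appeal to Lemma \ref{lemmavolbound}, which in the present case $k\ge n$ gives $\omega^n \le C\Omega$. Since $\Omega$ and the volume form $\hat\omega^n$ of the fixed reference potential $\hat u$ in (\ref{hatu}) are both smooth and positive on the compact manifold $M$, their ratio is bounded above and below; comparing densities via $\det g = e^{-n\rho}(u')^{n-1}u''$ and the analogous formula for $\hat u$ gives $(u')^{n-1}u'' \le C (\hat u')^{n-1}\hat u''$. Using $a_0\le u' \le b_0$ (here $a_t\ge a_0$ because $k\ge n$) together with $\hat u'' = k(b_0-a_0)\,e^{k\rho}/(1+e^{k\rho})^2$ yields the claimed bound, and in particular a uniform bound $u''\le C$.

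The heart of the argument is the gradient bound (\ref{utp}). I would set $Y = u'''/u'' = (\log u'')'$ and compute its evolution from (\ref{udpevolution})--(\ref{utpevolution}). The top-order ($u^{(4)}$ and $u^{(5)}$) terms cancel between $\partial_t Y$ and $\Delta Y$, and the one remaining top-order term is proportional to $(\log u'')'' = Y'$, hence vanishes at any spatial critical point of $Y$. At such points one is left with the clean identity
$$(\partial_t - \Delta)Y = 2(n-1)\frac{u''}{(u')^2}\Big(\frac{u''}{u'} - Y\Big).$$
From the asymptotics (\ref{u0}) one checks $Y\to k$ as $\rho\to-\infty$ and $Y\to -k$ as $\rho\to+\infty$, so $Y$ extends continuously to $M$ with $|Y| = k$ on $D_0\cup D_\infty$ at all times; consequently any value of $Y$ with $|Y|>k$ is attained at a finite $\rho$, where the identity holds. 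At a space-time maximum one has $(\partial_t-\Delta)Y\ge 0$, which forces $Y\le u''/u' \le C$ (using $u''\le C$ and $u'\ge a_0$); at a space-time minimum $(\partial_t-\Delta)Y\le 0$ forces $Y\ge u''/u'\ge 0$. Combining these with the boundary values $\pm k$ and the initial data gives $|Y|\le C$, i.e. $|u'''|\le Cu''$.

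Finally, the bound $u''\le C(T-t)$ follows from (\ref{utp}). Since $|(\log u'')'|\le C$, we have $u''(\rho)\ge u''(\rho_0)e^{-C|\rho-\rho_0|}$ for all $\rho,\rho_0$. Integrating in $\rho$ and using $\int_{\mathbb{R}} u''\,d\rho = b_t - a_t = 2k(T-t)$ gives $(2/C)\,u''(\rho_0)\le 2k(T-t)$, hence $u''\le C(T-t)$; with the previous paragraph this is exactly $u''\le C\min\big(e^{k\rho}/(1+e^{k\rho})^2,\,T-t\big)$. The main obstacle I anticipate is the gradient estimate: producing the favorable sign in the evolution of $Y$ and, more subtly, applying the maximum principle on $M$ despite the degeneration $u''\to 0$ along $D_0$ and $D_\infty$. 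The observation that extrema of $Y$ beyond the fixed boundary values $\pm k$ must occur at finite $\rho$, where the operator is uniformly elliptic, is what makes the maximum principle legitimate.
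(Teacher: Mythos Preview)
Your proposal is correct and follows essentially the same route as the paper: the volume bound from Lemma~\ref{lemmavolbound} gives $u''\le C e^{k\rho}/(1+e^{k\rho})^2$, the maximum principle applied to $Y=u'''/u''$ (with the boundary values $\pm k$ coming from the expansions (\ref{u0})) gives $|u'''|\le Cu''$, and this third-derivative bound then forces $u''\le C(T-t)$ via the integral $\int_{\mathbb{R}} u''\,d\rho=b_t-a_t$. The only cosmetic difference is in this last step: the paper localizes near the maximum $\tilde\rho$ of $u''$ and uses the mean value theorem to get $u''\ge u''(\tilde\rho)/2$ on an interval of fixed length, whereas you integrate the global exponential lower bound $u''(\rho)\ge u''(\rho_0)e^{-C|\rho-\rho_0|}$ over all of $\mathbb{R}$; both arguments are equivalent.
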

\begin{proof}
We begin by establishing the bound
\begin{equation}\label{bd1}
u''(\rho, t) \le  C \frac{e^{k \rho}}{(1+e^{k \rho})^2}.
\end{equation}
Let $\hat{g}_0$ be the reference metric with associated potential  $\hat{u}_0$ (see (\ref{ref1})).  Then from (\ref{det}) we see that
 \begin{eqnarray} \nonumber
\det{\hat{g}_0} & = &   e^{-n \rho} ( \hat{u}_0'(\rho))^{n-1}  \hat{u}_0''(\rho) \\ \nonumber
& =  &  k (b_0-a_0) e^{- n \rho} \left( a_0 + (b_0-a_0) \frac{e^{k\rho}}{(1+ e^{k \rho})} \right)^{n-1} \frac{e^{k\rho}}{(1+e^{k\rho})^2} \\
& \le & C e^{-n \rho} \frac{e^{k\rho}}{(1+e^{k\rho})^2}. \label{bd2}
 \end{eqnarray}
By Lemma \ref{lemmavolbound}
 the volume form $\omega^n$ is uniformly bounded from above by a fixed volume form along the flow, and hence $\det g(t) \le C \det \hat{g}_0$ for some uniform constant $C$.  Combining this fact with (\ref{det}) and (\ref{bd2}), we have
 \begin{equation} \label{eqnvolformbd}
 ( u'(\rho,t) ) ^{n-1} u''(\rho,t) \le C\frac{e^{k\rho}}{(1+e^{k\rho})^2},
 \end{equation}
Then  (\ref{bd1}) follows, since $u'(\rho)$ is uniformly bounded from below away from zero.

Next, we give a proof of the bound (\ref{utp}).  We will apply the maximum principle to the quantity $u'''/u''$.  Before we do this, observe that for each fixed $t \in [0,T)$,
\begin{equation}
\label{limits1}
\lim_{\rho \rightarrow - \infty} \frac{u'''(\rho,t)}{u''(\rho,t)} = k, \quad \lim_{\rho \rightarrow \infty} \frac{u'''(\rho,t)}{u''(\rho,t)} = -k.
\end{equation}
Indeed, for the first limit, we can compute $u'''/u''$ in terms of the function $s \mapsto u_0(s, t)$ associated to $u$.  From (\ref{u0}),
\begin{eqnarray} \nonumber
u(\rho,t) & =&  a_t \rho + u_0(e^{k\rho}, t) \\ \nonumber
u'(\rho,t) & = & a_t + k e^{k\rho} u'_0(e^{k\rho}, t) \\ \nonumber
u''(\rho,t) & = & k^2 e^{k\rho} u'_0(e^{k\rho}, t) + k^2e^{2k\rho} u''_0(e^{k\rho}, t) \\ \label{uppp}
u'''(\rho,t) & = & k^3 e^{k\rho} u'_0(e^{k\rho},t) + 3k^3 e^{2k \rho} u''_0(e^{k\rho},t) + k^3 e^{3k \rho} u_0'''(e^{k\rho},t).
\end{eqnarray}
Hence
\begin{eqnarray*}
\frac{u'''(\rho,t)}{u''(\rho,t)} & = & \frac{k u'_0(e^{k\rho},t) + 3k e^{k \rho} u''_0(e^{k\rho},t) + k e^{2k \rho} u_0'''(e^{k\rho},t)}{u'_0(e^{k\rho}, t) + e^{k\rho} u''_0(e^{k\rho},t)} \longrightarrow k, \ \textrm{as } \ \rho \rightarrow -\infty,
\end{eqnarray*}
since $u'_0(0, t) >0$.  The second limit can be proved in a similar way using the function $u_{\infty}$.

Using (\ref{udpevolution}) and (\ref{utpevolution}) we find that $u'''/u''$ evolves by
\begin{eqnarray*}
\ddt{} \left( \frac{u'''}{u''} \right) & = & \frac{1}{u''} \left( \frac{u^{(5)}}{u''} - \frac{3u''' u^{(4)}}{(u'')^2} + \frac{2(u''')^3}{(u'')^3} + \frac{(n-1)u^{(4)}}{u'}  - \frac{3(n-1)u''u'''}{(u')^2} + \frac{2(n-1)(u'')^3}{(u')^3} \right) \\
&& \mbox{} - \frac{u'''}{(u'')^2} \left( \frac{u^{(4)}}{u''} - \frac{(u''')^2}{(u'')^2} + \frac{(n-1)u'''}{u'} - \frac{(n-1)(u'')^2}{(u')^2} \right).
\end{eqnarray*}
To obtain an upper bound of $u'''/u''$ we argue as follows.  From (\ref{limits1}) we may assume without loss of generality that the maximum of $u'''/u''$ occurs at an interior point $(\rho_1, t_1) \in \mathbb{R} \times (0,T)$.  At that point,
$$\left( \frac{u'''}{u''} \right)' = \frac{u^{(4)}}{u''} - \frac{(u''')^2}{(u'')^2} = 0$$
and
$$\left( \frac{u'''}{u''} \right)'' = \frac{u^{5}}{u''} - \frac{3u'''u^{(4)}}{(u'')^2} + 2 \frac{(u''')^3}{(u'')^3}  \le 0.$$
Then at $(\rho_1, t_1)$,
$$\ddt{} \left( \frac{u'''}{u''} \right) \le - \frac{2(n-1) u'''}{(u')^2} + \frac{2(n-1)(u'')^2}{(u')^3}$$
and hence
$$\left( \frac{u'''}{u''} \right) (\rho_1, t_1) \le \left(\frac{u''}{u'}\right)(\rho_1, t_1) \le C,$$
using (\ref{bd1}).  This gives the upper bound for $u'''/u''$.  The argument for the lower bound is similar. In fact, since $u''>0$ we obtain $u'''/u'' \ge -k$. This establishes (\ref{utp}).

It remains to prove the estimate $u'' \le C(T-t)$.  Fix $t$ in $[0,T)$.  Since the bound (\ref{bd1}) implies that $u''(\rho)$ tends to zero  as $\rho$ tends to  $\pm \infty$, there exists $\tilde{\rho} \in \mathbb{R}$ such that
$$u''(\tilde{\rho}) = \sup_{\rho \in \mathbb{R}} u''(\rho).$$
By the Mean Value Theorem and (\ref{utp}) we have, for all $\rho \in \mathbb{R}$,
$$u''(\tilde{\rho})- u''(\rho)  \le C u''(\tilde{\rho}) |\rho - \tilde{\rho} |.$$
Then for $|\rho - \tilde{\rho}| \le 1/2C$,
$$u''(\rho) \ge \frac{u''(\tilde{\rho})}{2} ,$$
and hence
$$\frac{1}{2C} u''(\tilde{\rho})=   \int_{ |\rho - \tilde{\rho}| \le 1/2C} \frac{u''(\tilde{\rho}) }{2}d\rho < \int_{-\infty}^{\infty} u''(\rho) d\rho = b_t - a_t =2k(T-t).$$
The bound $u'' \le C(T-t)$ then follows. \qed
\end{proof}

We can now use these estimates on $u$ to obtain bounds on the K\"ahler metric $\omega(t)$ along the K\"ahler-Ricci flow.  In the following, $\omega(t)$ will always denote a solution of the K\"ahler-Ricci flow (\ref{KRF0}) with initial metric $\omega_0$ satisfying the Calabi symmetry.

\begin{theorem} \label{gest}
We have
\begin{enumerate}
\item[(i)] $\displaystyle{\sup_M \tr_{\hat{g}_0}{g} \le C}.$
\item[(ii)] For any compact set $K \subset M \setminus (D_{\infty} \cup D_0)$,
$$\sup_K | \nabla_{\hat{g}_0} g |_{\hat{g}_0} \le C_K.$$
\end{enumerate}
\end{theorem}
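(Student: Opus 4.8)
The goal is to bound $\tr_{\hat g_0} g$ uniformly and to bound $|\nabla_{\hat g_0}g|_{\hat g_0}$ on compact subsets away from $D_0$ and $D_\infty$. Both quantities are $G_k$-invariant, so they depend only on $\rho$ (and $t$), and the key point is to translate them into expressions in the Calabi potentials $u$ and $\hat u_0$. The heavy lifting has already been done in Lemma \ref{lemmaest1}, which gives $u''\le C\min(e^{k\rho}/(1+e^{k\rho})^2,\,T-t)$ and $|u'''|\le Cu''$, together with Lemma \ref{lowerbd} (giving $u'\ge a_t$) and the elementary bound $u'\le b_t\le b_0$; the strategy is simply to feed these into the coordinate formula \eqref{metric}.

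Let me set up the computation. Both $g(t)$ and $\hat g_0$ are diagonalized simultaneously in a suitable frame because of the Calabi form of \eqref{metric}. Using \eqref{metric} for both metrics, one finds that on $\mathbb C^n\setminus\{0\}$ the eigenvalues of $g$ relative to $\hat g_0$ are $u'/\hat u_0'$ (with multiplicity $n-1$, from the $\delta_{ij}$ part on the orthogonal complement of $x$) and $u''/\hat u_0''$ (multiplicity one, in the radial $\ov x_i x_j$ direction). Hence
\begin{equation}
\tr_{\hat g_0}g = (n-1)\,\frac{u'}{\hat u_0'} + \frac{u''}{\hat u_0''}.
\end{equation}
For part (i) the first term is bounded since $a_t\le u'\le b_0$ and $\hat u_0'$ is bounded below away from zero (by its explicit form $\hat u_0'=a_0+(b_0-a_0)e^{k\rho}/(1+e^{k\rho})\ge a_0$). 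For the second term, $\hat u_0''= k(b_0-a_0)\,e^{k\rho}/(1+e^{k\rho})^2$, so the factor $e^{k\rho}/(1+e^{k\rho})^2$ in the numerator bound of \eqref{bd1} cancels exactly against $\hat u_0''$, giving $u''/\hat u_0''\le C$ uniformly. This yields (i).

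For part (ii), I would compute $|\nabla_{\hat g_0}g|_{\hat g_0}$, which in the Calabi setting reduces to controlling $\rho$-derivatives of $u'$ and $u''$, i.e.\ $u''$ and $u'''$, measured against $\hat g_0$. On a compact set $K\subset M\setminus(D_0\cup D_\infty)$, the variable $\rho$ ranges over a compact interval, so $\hat u_0',\hat u_0''$ and their derivatives are bounded above and below by positive constants depending on $K$. It then suffices to bound $u',u'',u'''$ on $K$, and these follow from $a_t\le u'\le b_0$, the bound \eqref{bd1} on $u''$, and $|u'''|\le Cu''\le C$ from \eqref{utp}. The covariant derivative $\nabla_{\hat g_0}$ introduces Christoffel symbols of $\hat g_0$, which on the compact set $K$ are likewise controlled in terms of $\hat u_0',\hat u_0'',\hat u_0'''$; assembling these gives the stated estimate $\sup_K|\nabla_{\hat g_0}g|_{\hat g_0}\le C_K$.

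**Main obstacle.** The real content is entirely contained in Lemma \ref{lemmaest1}; once those potential estimates are in hand, part (i) is a short cancellation and part (ii) is a compactness-in-$\rho$ argument. The one place requiring genuine care is organizing the curvilinear computation of $|\nabla_{\hat g_0}g|_{\hat g_0}$ in the $x_i$-coordinates, where the metric \eqref{metric} is not diagonal and the Christoffel symbols of $\hat g_0$ mix the radial and spherical directions; I expect this bookkeeping to be the main, though routine, obstacle. It is cleanest to carry it out in a local unitary coframe adapted to the splitting $\mathbb C^n=\mathbb C x\oplus x^\perp$, in which both metrics are diagonal and the derivative $\partial_\rho$ acts transparently, reducing everything to the scalar bounds on $u',u'',u'''$.
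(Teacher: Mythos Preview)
Your proposal is correct and follows essentially the same approach as the paper: part (i) via the identity $\tr_{\hat g_0}g=(n-1)u'/\hat u_0'+u''/\hat u_0''$ together with the cancellation of $e^{k\rho}/(1+e^{k\rho})^2$ against $\hat u_0''$, and part (ii) by reducing on a compact $\rho$-interval to the bounds on $u',u'',u'''$ from Lemma~\ref{lemmaest1}. The only difference is cosmetic: for (ii) the paper simply writes out $\partial_{x_k}g_{i\bar j}$ explicitly in the $x_i$-coordinates (a one-line formula in $u',u'',u'''$) rather than passing to an adapted coframe, which is a bit quicker than the bookkeeping you anticipate.
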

\begin{proof}
An elementary computation shows that
$$\tr_{\hat{g}_0}{g} = \frac{u''}{\hat{u}_0''} + (n-1) \frac{u'}{\hat{u}_0'}.$$
But we have $u'/\hat{u}_0' \le b_0/a_0$ and, making use of
 Lemma \ref{lemmaest1},
$$\frac{u''}{\hat{u}_0''}  = \frac{(1+e^{k\rho})^2}{k(b_0-a_0) e^{k\rho}} u'' \le C ,$$
and this gives (i).

We now prove (ii).  By (i) it suffices to bound
$$\frac{\partial}{\partial x_{k}} g_{i\ov{j}} = e^{-2\rho} (u''-u') (\ov{x}_k \delta_{ij} + \ov{x}_i \delta_{jk}) + e^{-3\rho} \ov{x}_i x_j \ov{x}_k (u'''-3u''+2u'),$$
in a given compact set $K  \subset M \setminus (D_{\infty} \cup D_0)$.
But $u''$ and $u'''$ and uniformly bounded from above by Lemma \ref{lemmaest1}, and
in $K$, the functions $\rho$ and $x_i$ are uniformly bounded.  This gives (ii).
\qed
\end{proof}

From Lemma \ref{lowerbd} and Theorem \ref{gest} we obtain the following immediate corollary.

\begin{corollary}
We have
$$\frac{1}{C} \le \emph{diam}_{g(t)} M \le C.$$
\end{corollary}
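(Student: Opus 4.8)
The corollary asserts a two-sided uniform bound on the diameter of $(M, g(t))$ along the flow, valid in the case $k \geq n$ (and also $k=1$ with $b_0 < 3a_0$, since the same estimates apply there). The plan is to extract this from the metric comparisons already established, namely the lower bound $\omega(t) \geq a_t \chi$ of Lemma \ref{lowerbd} and the upper bound $\tr_{\hat{g}_0} g \leq C$ of Theorem \ref{gest}(i).

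\textbf{Upper bound on the diameter.} First I would note that Theorem \ref{gest}(i) gives $g(t) \leq C \hat{g}_0$ as a bound on quadratic forms, uniformly in $t$, since $\tr_{\hat g_0} g \le C$ controls every eigenvalue of $g$ relative to $\hat g_0$. Hence every $g(t)$-length of a path is bounded by $\sqrt{C}$ times its $\hat g_0$-length, so
\begin{equation*}
\mathrm{diam}_{g(t)} M \leq \sqrt{C}\, \mathrm{diam}_{\hat{g}_0} M.
\end{equation*}
Since $\hat{g}_0$ is a fixed smooth K\"ahler metric on the compact manifold $M$, its diameter is a finite constant, giving the upper bound.

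\textbf{Lower bound on the diameter.} This is the more delicate direction and the step I expect to require the most care. The inequality $\omega(t) \geq a_t \chi$ with $\chi = \pi^* \omega_{\mathrm{FS}}$ only controls $g(t)$ from below by the \emph{pullback} of the Fubini--Study metric, which is merely a nonnegative form degenerating along the fiber directions; one cannot directly conclude a positive lower bound on all $g(t)$-lengths. The correct way to exploit it is to compare distances in $M$ with distances in the base $\mathbb{P}^{n-1}$: for any two points $p, q$ lying in distinct fibers, the $\chi$-length of any path joining them dominates the $\omega_{\mathrm{FS}}$-length of its projection under $\pi$, so
\begin{equation*}
\mathrm{diam}_{g(t)} M \;\geq\; \sqrt{a_t}\; \mathrm{diam}_{\omega_{\mathrm{FS}}} \mathbb{P}^{n-1}.
\end{equation*}
Because $a_t$ is bounded below away from zero for $t \in [0,T)$ (this is exactly the hypothesis of the $k \geq n$ case, and was the recurring assumption in Lemmas \ref{lemmavolbound} and \ref{trchi}), and $\mathrm{diam}_{\omega_{\mathrm{FS}}} \mathbb{P}^{n-1}$ is a fixed positive number, the lower bound follows. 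I would take two points $p,q$ realizing the diameter of the base and lift them to $M$ to run this comparison cleanly.

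\textbf{Assembly.} Combining the two directions, with $C$ enlarged to absorb the fixed geometric constants $\mathrm{diam}_{\hat{g}_0} M$, $\mathrm{diam}_{\omega_{\mathrm{FS}}} \mathbb{P}^{n-1}$, and the uniform lower bound on $a_t$, yields
\begin{equation*}
\frac{1}{C} \leq \mathrm{diam}_{g(t)} M \leq C
\end{equation*}
uniformly for $t \in [0,T)$. The only genuine subtlety is the lower bound, where the degeneracy of $\chi$ forces one to pass to the base $\mathbb{P}^{n-1}$ rather than arguing directly on $M$; everything else is a routine length-comparison argument using the already-proven two-sided metric estimates.
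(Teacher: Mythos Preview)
Your proof is correct and follows exactly the approach the paper indicates: the paper simply states that the corollary is immediate from Lemma \ref{lowerbd} (the lower bound $\omega(t)\ge a_t\chi$) and Theorem \ref{gest} (the upper bound $\tr_{\hat g_0} g\le C$), and you have spelled out the routine length-comparison details that the authors left implicit. Your handling of the lower bound---projecting to the base $\mathbb{P}^{n-1}$ via $\pi$ to circumvent the degeneracy of $\chi$ in the fiber direction---is precisely the intended argument.
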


Moreover we  prove:

\begin{theorem} \label{thmconv}
Define $\tilde{\varphi}= \tilde{\varphi}(t)$ by
\begin{equation} \label{phitilde}
\omega(t) = \hat{\omega}_t + \ddbar \tilde{\varphi}, \quad \tilde{\varphi}|_{\rho=0} = 0.
\end{equation}
Then for all $\beta$ with $0< \beta<1$,
\begin{enumerate}
\item[(i)] $\tilde{\varphi}$ tends to zero in $C^{1,\beta}_{\hat{g}_0}(M)$ as $t \rightarrow T$.
\item[(ii)]  For any compact set $K \subset M \setminus  (D_{\infty} \cup D_0)$, $\tilde{\varphi}$ tends to zero in $C^{2, \beta}_{\hat{g}_0}(K)$ as $t \rightarrow T$.  In particular, on $K$, $\omega(t)$ converges to $a_T \chi$ on $C^{\beta}_{\hat{g}_0}(K)$ as $t \rightarrow T$.
\end{enumerate}
\end{theorem}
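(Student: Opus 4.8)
The plan is to establish Theorem \ref{thmconv} by converting the pointwise and derivative estimates on the potential $u$ already obtained in this section into $C^{1,\beta}$ and $C^{2,\beta}$ statements about $\tilde\varphi$. The starting point is the explicit relation between the flow potential $u$, the reference potential $\hat u_0$, and $\tilde\varphi$. Since $\omega(t) = \hat\omega_t + \ddbar\tilde\varphi$ and both $\omega(t)$ and $\hat\omega_t$ are built from $G_k$-invariant potentials, $\tilde\varphi$ is itself a function of $\rho$ alone, and one has $\tilde\varphi(\rho,t) = u(\rho,t) - \hat u_t(\rho) + (\text{const})$, where $\hat u_t = a_t u_\chi + \tfrac{b_t-a_t}{2k} u_\theta$ is the potential of $\hat\omega_t$. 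The normalization $\tilde\varphi|_{\rho=0}=0$ fixes the constant. I would first record that $\hat u_t \to a_T u_\chi = a_T\rho$ uniformly on compact sets and, more importantly, that $\hat u_t$ differs from $\hat u_0$ in a controlled way as $t\to T$.

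First I would prove part (i), the $C^{1,\beta}_{\hat g_0}$-convergence on all of $M$. The key input is Lemma \ref{lemmapointwise}(ii), which gives $u(\rho,t) - a_T\rho \to 0$ pointwise, together with Theorem \ref{gest}(i), which gives the uniform bound $\sup_M \tr_{\hat g_0} g \le C$ — equivalently a uniform $C^1$ (in the $\hat g_0$ sense) bound on $\tilde\varphi$, since $\ddbar\tilde\varphi = \omega(t)-\hat\omega_t$ is controlled by $\tr_{\hat g_0}g$ and the reference metric. A uniform $\hat g_0$-bound on one derivative more than $C^0$, combined with the pointwise decay to zero, yields via interpolation (Arzelà–Ascoli plus the standard interpolation inequality $\|f\|_{C^{1,\beta}} \le C\|f\|_{C^0}^{1-\sigma}\|f\|_{C^{2}}^{\sigma}$) that $\tilde\varphi \to 0$ in $C^{1,\beta}_{\hat g_0}(M)$ for every $\beta \in (0,1)$. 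I must be careful that the uniform bound here is a genuine $C^{1,1}$- or $C^2$-type bound with respect to $\hat g_0$ on all of $M$, not merely on compact subsets of $M\setminus(D_\infty\cup D_0)$; the bound $\tr_{\hat g_0}g\le C$ of Theorem \ref{gest}(i) is exactly the global ingredient that makes this work, since it controls $\omega(t)$ from above by $\hat g_0$ globally while Lemma \ref{lowerbd} controls it from below by $a_t\chi$.

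For part (ii) I would localize to a compact set $K\subset M\setminus(D_\infty\cup D_0)$ and upgrade to $C^{2,\beta}_{\hat g_0}$. Here Theorem \ref{gest}(ii) supplies the uniform bound $\sup_K|\nabla_{\hat g_0}g|_{\hat g_0}\le C_K$, i.e.\ a uniform bound on one derivative of the metric, which translates to a uniform $C^3$-type (hence in particular $C^{2,\beta}$) control on $\tilde\varphi$ over $K$. Since on $K$ the coordinates $x_i$ and the function $\rho$ stay in a bounded range, the explicit formula for $g_{i\ov j}$ and its $x_k$-derivative in terms of $u',u'',u'''$ shows that the second derivatives of $\tilde\varphi = u - \hat u_t$ converge to those of $a_T\rho - a_T\rho = 0$: indeed $u'' \le C(T-t)\to 0$ and $u'-a_t\to 0$ by Lemma \ref{lemmaest1} and Lemma \ref{lemmapointwise}, while the reference potential's second derivative $\hat u_t''$ also tends to zero on $K$ as $b_t-a_t\to 0$. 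Combining the uniform higher-order bound with this pointwise decay of the second derivatives, interpolation again gives $\tilde\varphi\to 0$ in $C^{2,\beta}_{\hat g_0}(K)$, and consequently $\omega(t)=\hat\omega_t+\ddbar\tilde\varphi \to a_T\chi$ in $C^\beta_{\hat g_0}(K)$.

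The main obstacle I anticipate is bookkeeping the global versus local nature of the estimates and verifying that the pointwise limits are compatible with the chosen normalization $\tilde\varphi|_{\rho=0}=0$. Specifically, in part (i) the delicate point is that the uniform $C^{1,1}_{\hat g_0}$ bound must hold \emph{up to and including} the divisors $D_0$ and $D_\infty$ (where $\rho\to\pm\infty$), so I cannot simply use the $x$-coordinate derivative estimate of Theorem \ref{gest}(ii), which degenerates there; instead I must rely on $\tr_{\hat g_0}g\le C$ as the global second-order control and check that it genuinely bounds the full Hessian of $\tilde\varphi$ in the $\hat g_0$ metric, using the two-sided comparison $a_t\chi \le \omega(t) \le C\hat g_0$. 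Once the correct global bound and the pointwise decay are both in hand, the interpolation step is routine.
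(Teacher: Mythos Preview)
Your proposal is correct and follows the same route as the paper: identify $\tilde\varphi = u - \hat u_t$ via the normalization, use Lemma \ref{lemmapointwise} for pointwise convergence to zero, Theorem \ref{gest}(i) for a global bound on the complex Hessian of $\tilde\varphi$ (not merely a $C^1$ bound, as you write at first but correctly revise later), and then interpolate to get (i); for (ii), invoke Theorem \ref{gest}(ii) for the local third-order control and interpolate again. The paper's own proof is terser---it simply notes that $\Delta_{\hat g_0}\tilde\varphi$ is bounded on $M$ and that Theorem \ref{gest}(ii) handles part (ii)---but the ingredients and their order are exactly yours.
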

\begin{proof}
By the normalization of $\tilde{\varphi}$, we have $\tilde{\varphi}(t) = u(t) - \hat{u}_t$.  As $t \rightarrow T$, $\hat{u}_t$ tends to $a_T u_{\chi}$.  Then by   Lemma \ref{lemmapointwise}, $\tilde{\varphi}$ converges pointwise to zero.  Taking the trace of (\ref{phitilde}) with respect to $\hat{g}_0$, applying the first part of Theorem \ref{gest} and using the fact that $\tr_{\hat{g}_0}\hat{g}_t$ is bounded, we see that $\Delta_{\hat{g}_0} \tilde{\varphi}$ is uniformly bounded on $M$, giving (i).  The second part of Theorem \ref{gest} gives (ii).
  \qed
\end{proof}

Finally, we use the estimates of Lemma \ref{lemmaest1} together with the bound of Lemma \ref{trchi} to show that, away from the divisors $D_0$ and $D_{\infty}$, the fibers are collapsing.

\begin{theorem} \label{thmfiber} Let $\pi^{-1}(z)$ be the fiber of $\pi: M \rightarrow \mathbb{P}^{n-1}$ over the point $z\in \mathbb{P}^{n-1}$.  Define $\omega_z(t) = \omega(t)|_{\pi^{-1}(z)}$.  Then for any compact set $K \subset M \setminus (D_0 \cup D_{\infty})$, there exists a constant $C_K$ such that
\begin{equation} \label{fiberbd}
\sup_{z \in \mathbb{P}^{n-1}} \| \omega_z(t) \|_{C^0(\pi^{-1}(z) \cap K)} \le C_K(T-t).
\end{equation}
\end{theorem}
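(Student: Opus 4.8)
The plan is to estimate the restriction of $\omega(t)$ to a fiber directly in terms of the potential function $u$, reducing everything to the one-variable bounds already established in Lemma \ref{lemmaest1}. First I would fix a point $z \in \mathbb{P}^{n-1}$ and choose a chart $U_i$ so that, in the coordinates $(x_1,\dots,x_n)$ on $\mathbb{C}^n \setminus \{0\}$ parametrizing $M \setminus (D_0 \cup D_\infty)$, the fiber $\pi^{-1}(z)$ is cut out by fixing the values of $z^j_{(i)} = x_j/x_i$ for $j \neq i$, leaving the fiber coordinate as essentially $x_i$ (equivalently $y_{(i)} = x_i^k$). The key point is that along the fiber only the radial variable $\rho = \log(\sum |x_\ell|^2)$ varies, so from the metric formula \eqref{metric} the induced metric $\omega_z(t)$ will be controlled by $u''$ and $u'$ evaluated along the fiber.

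The main computation is to pull back $g_{i\ov j} = e^{-\rho}u' \delta_{ij} + e^{-2\rho}\ov{x}_i x_j(u''-u')$ to the fiber. Parametrizing the fiber by $x_i$ (with the ratios $x_j/x_i$ fixed), a direct calculation shows that the induced metric in the fiber direction is a positive multiple of $u''(\rho,t)$ times a factor depending on $z$ and $\rho$ but bounded on compact sets away from the divisors. Concretely, restricting to the fiber the tangential part of $g_{i\ov j} dx^i d\ov x^j$ isolates the coefficient $u''$ (the component along the radial/fiber direction), whereas the $u'$ terms contribute to the directions transverse to the fiber. Thus $\|\omega_z(t)\|_{C^0(\pi^{-1}(z) \cap K)}$ is bounded by a constant $C_K$ (absorbing the $e^{-\rho}$, $e^{-2\rho}$ factors and the bounded coordinate functions on $K$) times $\sup_\rho u''(\rho,t)$, where the supremum is over the relevant range of $\rho$ on $K$. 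Invoking the estimate $u''(\rho,t) \le C(T-t)$ from \eqref{udp} in Lemma \ref{lemmaest1} immediately yields the bound $C_K(T-t)$, uniformly in $z$.

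The bound of Lemma \ref{trchi} on $\tr_\omega \chi$ enters to handle the uniformity in $z \in \mathbb{P}^{n-1}$: since $\chi = \pi^* \ofs$ measures the horizontal (base) directions, the parabolic Schwarz estimate guarantees that the horizontal geometry does not degenerate, so the identification of the ``fiber direction'' with the $u''$-coefficient is legitimate on all of $K$ with constants independent of $z$. I would make this precise by diagonalizing $g_{i\ov j}$ against $\chi_{i\ov j}$, noting from \eqref{chi} that the eigenvalue of $g$ in the direction $\ov x_i x_j / \sum_k |x_k|^2$ (the fiber direction) is exactly $e^{-\rho} u''$, while the remaining $(n-1)$ eigenvalues equal $e^{-\rho} u'$ and correspond to the horizontal directions seen by $\chi$.

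The step I expect to require the most care is verifying that the fiber direction in the $(x_1,\dots,x_n)$ coordinates is correctly identified and that the compactness of $K \subset M \setminus (D_0 \cup D_\infty)$ truly keeps $\rho$ (hence $e^{-\rho}$, $e^{-2\rho}$) and the coordinate functions bounded. Once that geometric bookkeeping is in place, the $C^0$ bound is a clean consequence of the already-proven estimate $u'' \le C(T-t)$, so the analytic heart of the theorem has in fact been done in Lemma \ref{lemmaest1}; the content here is purely the translation from the scalar bound on $u''$ to the statement about the collapsing fiber metric.
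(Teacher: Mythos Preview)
Your approach is correct and in fact more direct than the paper's. Your key computation is right: parametrizing a fiber by $\lambda \mapsto \lambda v$ with $|v|^2=1$, one has $e^{\rho}=|\lambda|^2$ and
\[
g_{i\ov j}\,v_i\,\ov{v}_j
= e^{-\rho}u'|v|^2 + e^{-2\rho}(u''-u')\,|\lambda|^2|v|^4
= e^{-\rho}u'',
\]
so the induced fiber metric is exactly $e^{-\rho}u''\,d\lambda\,d\ov\lambda$. On $K\subset M\setminus(D_0\cup D_\infty)$ the variable $\rho$ stays in a bounded interval, hence $e^{-\rho}$ (and the ratio to any fixed reference fiber metric, e.g.\ $e^{-\rho}\hat u_0''$) is bounded above and below; the bound $u''\le C(T-t)$ from Lemma~\ref{lemmaest1} then gives the result, uniformly in $z$ because the answer depends only on $\rho$.

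The paper argues differently: it converts $u''\le C(T-t)$ into a volume estimate $\omega^n\le C_K(T-t)\Omega$ on $K$ via (\ref{det}), then diagonalizes $\omega$ and $\chi$ simultaneously and uses the Schwarz estimate $\tr_\omega\chi\le C$ of Lemma~\ref{trchi} to bound the $(n-1)$ horizontal eigenvalues from \emph{below}, so that the fiber eigenvalue is controlled by the volume. This linear-algebra route is more robust (it would survive without the explicit Calabi formula, given only a volume bound and a parabolic Schwarz lemma), whereas your route exploits the symmetry to read off the fiber eigenvalue directly. One clarification: in your argument Lemma~\ref{trchi} is not actually needed---the uniformity in $z$ and the identification of the fiber eigenvalue with $e^{-\rho}u''$ come for free from the explicit computation and the $U(n)$ symmetry, not from any nondegeneracy of the horizontal directions. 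Your paragraph invoking Lemma~\ref{trchi} can simply be dropped.
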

\begin{proof}
Fix a compact set $K \subset M \setminus (D_0 \cup D_{\infty})$.  From (\ref{det}) we see that on $K$, the quantity $\omega^n/\Omega$ is uniformly equivalent to $u''$.  Then
by Lemma \ref{lemmaest1}, there exists a constant $C_K$ such that
\begin{equation} \label{CK}
\omega^n(x) \le C_K(T-t) \Omega(x), \quad \textrm{for } x \in K.
\end{equation}
Now at a point $x \in K$, choose complex coordinates $z^1, \ldots, z^n$ so that $$\chi = \frac{\sqrt{-1}}{2\pi} \sum_{i=1}^{n-1} dz^i \wedge d\ov{z^i} \quad  \textrm{and} \quad \omega = \frac{\sqrt{-1}}{2\pi} \sum_{i=1}^{n} \lambda_i dz^i \wedge d\ov{z^i},$$
for $\lambda_1, \ldots, \lambda_n>0$.  The coordinate  $z^n$ is in the fiber direction and we wish to obtain an upper bound for $\lambda_n$.
From Lemma \ref{trchi} we have
$$\sum_{i=1}^{n-1} \frac{1}{\lambda_i} \le C.$$
Hence $\lambda_1, \ldots, \lambda_{n-1}$ are uniformly bounded from below away from zero and we have, for uniform constants $C, C'$,
$$\lambda_n \le C \frac{1}{\lambda_1 \cdots \lambda_{n-1}} \frac{\omega^n(x)}{\Omega} \le C' \cdot C_K (T-t)$$
from  (\ref{CK}).  This proves (\ref{fiberbd}).\qed
\end{proof}

As a corollary of this and Theorem \ref{gest}, the diameter of the fibers goes to zero as $t$ tends to $T$.

\begin{corollary} \label{corollaryfiber}
As above, let $\pi^{-1}(z)$ be the fiber of $\pi: M \rightarrow \mathbb{P}^{n-1}$ over the point $z\in \mathbb{P}^{n-1}$.   Then
$$\lim_{t \rightarrow T} \left( \sup_{z \in \mathbb{P}^{n-1}} \emph{diam}_{g(t)} \,\pi^{-1}(z) \right)=0.$$
\end{corollary}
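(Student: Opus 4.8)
The plan is to deduce the corollary directly from the fiberwise metric bound of Theorem~\ref{thmfiber} together with the upper bound on the total metric from Theorem~\ref{gest}(i). The key observation is that the $\mathbb{P}^1$ fibers $\pi^{-1}(z)$ split naturally according to whether they lie in the compact region away from $D_0\cup D_\infty$ or near the divisors, and I must control the diameter uniformly across both regions.

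First I would fix $\varepsilon>0$ and choose a neighborhood $V_\varepsilon$ of $D_0\cup D_\infty$ such that on $K:=M\setminus V_\varepsilon$ the bound \eqref{fiberbd} applies: $\sup_{z}\|\omega_z(t)\|_{C^0(\pi^{-1}(z)\cap K)}\le C_K(T-t)$. Since each fiber $\pi^{-1}(z)$ is a fixed copy of $\mathbb{P}^1$ with bounded (Euclidean-length) diameter in a fixed background metric, integrating the square root of the pointwise metric bound along paths shows that the $g(t)$-length of any curve in $\pi^{-1}(z)\cap K$ is at most $C_K'\sqrt{T-t}$; hence the $g(t)$-diameter of the compact portion of each fiber tends to zero uniformly in $z$ as $t\to T$. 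The remaining piece of each fiber lies in $V_\varepsilon$, which by Theorem~\ref{gest}(i) has $g(t)$-diameter bounded by $C\,\mathrm{diam}_{\hat g_0}(V_\varepsilon\cap\pi^{-1}(z))$; choosing $V_\varepsilon$ to have small $\hat g_0$-diameter (which is possible since $D_0,D_\infty$ are smooth divisors and the fibers meet them transversally in single points) makes this contribution smaller than $\varepsilon$ uniformly in $z$.

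Combining the two estimates by the triangle inequality along each fiber yields $\sup_z \mathrm{diam}_{g(t)}\pi^{-1}(z)\le C_K'\sqrt{T-t}+C\varepsilon$, and letting $t\to T$ and then $\varepsilon\to 0$ gives the conclusion. The main subtlety, and what I expect to be the chief obstacle, is handling the contribution near the divisors: the bound \eqref{fiberbd} degenerates as one approaches $D_0\cup D_\infty$, so the argument must show that the near-divisor portion of each fiber contributes negligibly to the diameter. This is controlled precisely by the fact that Theorem~\ref{gest}(i) provides a \emph{uniform} upper bound $\tr_{\hat g_0}g\le C$ on all of $M$ (including near the divisors), so the $g(t)$-length of the short fiber-segments inside $V_\varepsilon$ stays comparable to their fixed $\hat g_0$-length, which can be made arbitrarily small by shrinking $V_\varepsilon$. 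One should also verify that the choice of $V_\varepsilon$ and the constants $C_K$ can be made independently of $z\in\mathbb{P}^{n-1}$, which follows from the $G_k$-invariance (Calabi symmetry): the estimates depend only on $\rho$, so they are automatically uniform over the base $\mathbb{P}^{n-1}$.
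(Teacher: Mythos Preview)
Your proposal is correct and follows essentially the same approach as the paper: split each fiber into the portion inside a small tubular neighborhood of $D_0\cup D_\infty$ (whose $g(t)$-diameter is uniformly small by Theorem~\ref{gest}(i)) and the complementary compact portion (whose $g(t)$-diameter tends to zero by Theorem~\ref{thmfiber}), then combine via the triangle inequality. Your write-up is in fact a bit more explicit than the paper's about why the near-divisor contribution can be made small and why the constants are uniform in $z$.
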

\begin{proof}
Fix $\varepsilon >0$.   By Theorem \ref{gest}, (i) there exists a tubular neighborhood $N_{\varepsilon}$ of $D_0 \cup D_{\infty}$ such that for all $z \in \mathbb{P}^{n-1}$ and all $t \in [0,T)$,
\begin{equation} \label{epsilon1}
\textrm{diam}_{g(t)} ( \pi^{-1}(z) \cap N_{\varepsilon} ) < \frac{\varepsilon}{2}.
\end{equation}
On the other hand, applying Theorem \ref{thmfiber} with $K = M \setminus N_{\varepsilon}$ we see that for $t$ sufficiently close to $T$,
\begin{equation} \label{epsilon2}
\textrm{diam}_{g(t)} ( \pi^{-1}(z) \cap K ) < \frac{\varepsilon}{2},
\end{equation}
for all $z \in \mathbb{P}^{n-1}$.  Combining (\ref{epsilon1}) and (\ref{epsilon2}) completes the proof.  \qed

\end{proof}

\subsection{The case $1 \le k \le n-1$}

As in section \ref{sectionKRk1}, there are three subcases.  We do not require any further estimates when
 $a_0(n+k) = b_0(n-k)$  and so we move on to the other two cases.

\subsubsection{The subcase $a_0(n+k) > b_0(n-k)$.}

We obtain all the results of subsection \ref{calabisubsectionk2} by identical proofs.   The key point is that, in this case, $a_t$ is uniformly bounded from below away from zero.

\subsubsection{The subcase $a_0(n+k) < b_0(n-k)$.}

Recall that in this case, $a_t = a_0+ (k-n)t$ tends to zero as $t$ tends to the blow-up time $T$.  Note that by part (ii) of Theorem \ref{theoremTZ} we already have $C^{\infty}$ estimates for the metric $g(t)$ on $M \setminus D_0$.  In this subsection we will obtain estimates for the metric in a neighborhood of $D_0$.  First we have the following estimate on $u'$.

\begin{lemma} \label{lemmak1up}
There exists a uniform constant $C$ such that for all $t \in [0,T)$ and all $\rho \in \mathbb{R}$,
 $$ 0 <  u'(\rho, t) - a_t \leq  C e^{k\rho/n}.$$
\end{lemma}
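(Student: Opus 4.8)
The lower bound $u'(\rho,t)-a_t>0$ is immediate and requires no work: since $u''>0$ the function $\rho\mapsto u'(\rho,t)$ is strictly increasing with $\lim_{\rho\to-\infty}u'(\rho,t)=a_t$, exactly as recorded in the discussion preceding the lemma. The real content is the exponential upper bound, and the plan is to reduce it to a determinant (volume) estimate that can be integrated in the single variable $\rho$.

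First I would convert the volume bound of Theorem \ref{theoremTZ}(i) into a pointwise radial inequality. Since $\hat{\omega}_0$ is a genuine smooth K\"ahler metric on the compact manifold $M$, its volume form $\hat{\omega}_0^n$ is comparable to the fixed smooth volume form $\Omega$, so Theorem \ref{theoremTZ}(i) yields $\omega^n(t)\le C\,\hat{\omega}_0^n$ with $C$ uniform in $t$. Passing to the $x$-coordinates, the (singular) Jacobian cancels in the coordinate-free ratio $\omega^n/\hat{\omega}_0^n$, so this reads $\det g(t)\le C\det\hat{g}_0$. Using (\ref{det}) for both metrics together with the explicit potential $\hat{u}_0$, and the elementary estimates $\hat{u}_0'\le b_0$ and $\hat{u}_0''=k(b_0-a_0)e^{k\rho}/(1+e^{k\rho})^2\le k(b_0-a_0)e^{k\rho}$, one gets $\det\hat{g}_0\le C e^{(k-n)\rho}$. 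Combining this with $\det g=e^{-n\rho}(u')^{n-1}u''$ produces the key estimate
$$(u'(\rho,t))^{n-1}\,u''(\rho,t)\le C e^{k\rho},$$
with $C$ depending only on $a_0,b_0,n,k$.

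Finally I would integrate. Observing that $(u')^{n-1}u''=\tfrac1n\tfrac{d}{d\rho}(u')^n$, the key estimate becomes $\tfrac{d}{d\rho}(u')^n\le Cn\,e^{k\rho}$; integrating in $\rho$ from $-\infty$ and using $\lim_{\rho\to-\infty}u'(\rho,t)=a_t$ gives $(u'(\rho,t))^n-a_t^n\le \tfrac{Cn}{k}e^{k\rho}$ for all $\rho$. The proof then closes with the elementary inequality $(x-y)^n\le x^n-y^n$ for $x>y>0$ (valid term by term in the binomial expansion), applied with $x=u'$ and $y=a_t>0$: this turns the last bound into $(u'-a_t)^n\le C'e^{k\rho}$ and hence $u'-a_t\le C''e^{k\rho/n}$, which is the assertion (and the constant $C''$ is independent of $a_t$, hence uniform in $t\in[0,T)$).

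The only delicate point is the passage $\omega^n\le C\Omega\Rightarrow \det g\le C\det\hat{g}_0$ across the coordinate degeneration at $D_0$ (where the $x$-coordinate volume $\det\hat{g}_0\sim e^{(k-n)\rho}$ blows up because $k<n$); the resolution is simply that the inequality is really a statement about the coordinate-free ratio $\omega^n/\hat{\omega}_0^n$, so the singular Jacobian is irrelevant. Everything else is a one-dimensional integration plus the binomial inequality, so I expect no essential obstacle beyond tracking that all constants depend only on the initial data.
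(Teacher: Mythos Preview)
Your proof is correct and follows essentially the same route as the paper: both derive the key inequality $(u')^{n-1}u''\le Ce^{k\rho}$ from the uniform volume bound of Theorem \ref{theoremTZ}(i) via (\ref{det}) and the explicit reference metric, then integrate from $-\infty$ to obtain $(u')^n-a_t^n\le Ce^{k\rho}$. The only cosmetic difference is in the last step, where the paper passes directly to $u'\le (a_t^n+Ce^{k\rho})^{1/n}\le a_t+C^{1/n}e^{k\rho/n}$ while you use the equivalent binomial inequality $(u'-a_t)^n\le (u')^n-a_t^n$; either way the argument closes identically.
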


\begin{proof}  The first inequality follows from the definition of $a_t$ and the convexity of $u$.  For the upper bound of $u'(\rho,t)-a_t$ we argue as follows.
By part (i) of Theorem \ref{theoremTZ} the volume form of $\omega(t)$ is uniformly bounded along the flow.  Then by the same argument as in the proof of (\ref{eqnvolformbd}), we have
$$ ( u'(\rho,t) )^{n-1} u''(\rho,t) \leq C \frac{e^{k\rho}}{(1+ e^{k\rho})^2} \le C e^{k\rho}.$$
Hence
$$ ((u'(\rho,t))^n)'\leq Ce^{k\rho}.$$
Integrating in $\rho$ we obtain
$$ (u'(\rho,t))^n - a_t^n \leq C e^{k\rho} $$
and thus
$$  u'(\rho, t) \leq a_t  +  C e^{ \frac{k}{n} \rho},$$
as required.
\qed
\end{proof}

Note that the conclusion of Lemma \ref{lemmak1up} could be strengthened for $\rho>0$.  However, in this subsection we need only concern ourselves with the case of negative $\rho$ since  the metric is bounded away from $D_0$.

\begin{lemma} \label{lemmaudpk1}  There exists a uniform constant $C$ such that
  $$ u'' \leq C (u'-a_t) (b_t - u').$$
In particular,
$$u'' \leq C e^{k\rho/n }.$$
\end{lemma}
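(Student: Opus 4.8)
The plan is to prove the main inequality $u'' \le C(u'-a_t)(b_t-u')$ by a maximum principle argument, after which the stated consequence $u'' \le Ce^{k\rho/n}$ is immediate: by Lemma \ref{lemmak1up} we have $u'-a_t \le Ce^{k\rho/n}$, while $b_t - u' \le b_t - a_t \le b_0 - a_0$ is bounded, so the product is $\le Ce^{k\rho/n}$. The quantity to control is therefore
\[
Q(\rho,t) = \frac{u''}{(u'-a_t)(b_t-u')},
\]
which is well defined since $a_t < u' < b_t$ and $u''>0$. Using the expansions (\ref{uppp}) near $\rho \to -\infty$ (and the analogous ones via $u_\infty$ near $\rho \to +\infty$) one checks that $u''/(u'-a_t) \to k$ and $u''/(b_t-u') \to k$ at the two ends, so $Q$ extends continuously with finite boundary values $Q \to k/(b_t - a_t)$. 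Crucially, in this subcase $b_t - a_t = b_0 - a_0 - 2kt$ decreases only to $b_T = b_0 - (n+k)a_0/(n-k) > 0$, so $b_t - a_t$ stays bounded below away from zero on $[0,T)$; this is what makes the boundary values uniformly bounded and is the one place the hypothesis $a_0(n+k) < b_0(n-k)$ enters.

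Since $u$ is strictly convex, $s = u'(\rho,t)$ is a legitimate spatial coordinate, and I would change variables by writing $u''(\rho,t) = \sigma(s,t)$ as a function of $s \in (a_t,b_t)$. Differentiating the identity $\sigma(u',t) = u''$ in $\rho$ gives $u''' = \sigma_s\sigma$ and $u^{(4)} = (\sigma_{ss}\sigma + \sigma_s^2)\sigma$, and feeding these into the evolution equations (\ref{upevolution})--(\ref{udpevolution}) yields, after the cancellations, the clean parabolic equation
\[
\sigma_t = \sigma\,\sigma_{ss} - \sigma_s^2 + n\,\sigma_s - \frac{(n-1)\sigma^2}{s^2},
\]
with degenerate boundary data $\sigma = 0$ at $s = a_t, b_t$. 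Writing $P(s,t) = (s-a_t)(b_t-s)$, the object to bound is $H = \sigma/P = Q$, and I would apply the maximum principle to $H$ on $(a_t,b_t)\times[0,T)$.

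At an interior space-time maximum of $H$ one has $\sigma_s = HP_s$, $\sigma_{ss} \le HP_{ss} = -2H$, and $H_t \ge 0$, the last giving $\sigma_t \ge HP_t$. Substituting into the evolution equation and using $\sigma = HP>0$ produces, after dividing by $H$,
\[
H\Big(2P + P_s^2 + \tfrac{(n-1)P^2}{s^2}\Big) \le nP_s - P_t.
\]
The key computation, using $\dot a_t = k-n$ and $\dot b_t = -(k+n)$, is that the right side collapses to exactly $k(b_t-a_t)$ (the $s$-dependence cancels identically), while the bracket on the left is coercive: a direct algebraic check gives $2P + P_s^2 \ge (b_t-a_t)^2/2$. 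Hence $H \le 2k/(b_t-a_t) \le 2k/b_T$ at any interior maximum. Comparing with the boundary values $k/(b_t-a_t) \le k/b_T$ and the bounded initial data, the maximum principle yields $H \le C$ uniformly, i.e. $u'' \le C(u'-a_t)(b_t-u')$. The main obstacle is the derivation of the $\sigma$-equation and verifying the two favorable algebraic facts at the maximum (the exact cancellation making the right side $k(b_t-a_t)$, independent of the maximizing point, and the coercivity of the bracket); everything else is routine once the hypothesis guarantees $b_t - a_t \ge b_T > 0$.
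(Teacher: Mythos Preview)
Your proof is correct and arrives at the same bound as the paper, but by a genuinely different route. The paper applies the maximum principle directly in the $\rho$-variable to $H = \log u'' - \log(u'-a_t) - \log(b_t-u')$: one computes $\partial_t H$ from (\ref{upevolution})--(\ref{udpevolution}), substitutes the max-point conditions $H'=0$, $H''\le 0$, and reads off $u''/((u'-a_t)(b_t-u')) \le k(b_t-a_t)/[(u'-a_t)^2+(b_t-u')^2] \le C$. You instead pass to the Legendre-type coordinate $s=u'$ and the profile $\sigma(s,t)=u''$, deriving the porous-medium-like equation $\sigma_t = \sigma\sigma_{ss} - \sigma_s^2 + n\sigma_s - (n-1)\sigma^2/s^2$ and bounding $\sigma/P$ with $P=(s-a_t)(b_t-s)$. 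Your key algebraic observations --- that $nP_s - P_t$ collapses to $k(b_t-a_t)$ and that $2P+P_s^2 \ge (b_t-a_t)^2/2$ --- are exactly equivalent, after the change of variables, to the paper's final inequality, but your formulation makes the cancellation and coercivity transparent and ties the argument to the standard profile-function framework for cohomogeneity-one K\"ahler metrics. The paper's approach is shorter since it avoids deriving the $\sigma$-equation; your approach is more structural and makes clearer why the constant $k$ (rather than $n$) appears in the final bound.
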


\begin{proof}

We evolve the quantity $H = \log u'' - \log (u'-a_t) - \log (b_t- u').$  Using (\ref{upevolution}) and (\ref{udpevolution}) we compute
\begin{eqnarray} \nonumber
\frac{\partial H}{\partial t} & = & \frac{1}{u''} \left( \frac{u^{(4)}}{u''} - \frac{(u''')^2}{(u'')^2} + \frac{(n-1)u'''}{u'} - \frac{(n-1)(u'')^2}{(u')^2} \right) \\ \nonumber
&& \mbox{} - \frac{1}{u'-a_t} \left( \frac{u'''}{u''} + \frac{(n-1)u''}{u'} -k \right) \\ \label{evolveH}
&& \mbox{} - \frac{1}{b_t-u'} \left( - \frac{u'''}{u''} - \frac{(n-1)u''}{u'} -k \right).
\end{eqnarray}

Before applying the maximum principle we check that $H$ remains bounded from above as $\rho$ tends to $\pm \infty$.  For the case of $\rho$ negative we use  (\ref{uppp}) to obtain
\begin{eqnarray*}
\frac{u''(\rho,t)}{(u'(\rho,t)-a_t)(b_t -u'(\rho,t))} & = & \frac{k e^{k\rho} u'_0(e^{k\rho}, t) + ke^{2k\rho} u''_0(e^{k\rho}, t)}{e^{k\rho} u'_0(e^{k\rho},t) (b_t-a_t - ke^{k\rho}u'_0(e^{k\rho},t))} \\
& \le & \frac{k}{b_t-a_t - ke^{k\rho} u'_0(e^{k\rho},t)} + \frac{ke^{k\rho}u''_0(e^{k\rho},t)}{u'_0(e^{k\rho}, t) (b_t-a_t)} \\
& \le & \frac{k}{b_t-a_t} +1,
\end{eqnarray*}
as $\rho$ tends to $- \infty$, where we are using the fact that $u'_0(0,t)>0$.  Note that  $b_t-a_t$ remains bounded from below away from zero. Similarly, we can show that $H$ is bounded from above as $\rho$ tends to positive infinity.

Suppose then that $H$ has a maximum at a point $(\rho_0,t_0) \in  \mathbb{R} \times (0, T)$.  Then at this point, we have
\begin{equation} \label{dhz}
\frac{u'''}{u''} - \frac{u''}{u'-a_t} + \frac{u''}{b_t-u'} =0
\end{equation}
and
\begin{equation} \label{ddhz}
\frac{u^{(4)}}{u''} - \frac{(u''')^2}{(u'')^2} - \frac{u'''}{u'-a_t} + \frac{(u'')^2}{(u'-a_t)^2} + \frac{u'''}{b_t-u'} + \frac{(u'')^2}{(b_t-u')^2} \le 0.
\end{equation}
Combining (\ref{evolveH}), (\ref{dhz}) and (\ref{ddhz}) we see that at $(\rho_0, t_0)$,
$$0 \leq -u''\left( \frac{1}{(u'-a_t)^2} + \frac{1}{ (b_t - u')^2} \right) - \frac{(n-1)u''}{(u')^2} + \frac{k}{u'-a_t} + \frac{k}{b_t - u'}    $$ and hence
$$ \frac{u''}{(u'-a_t)(b_t -u')} \leq \frac{k(b_t - a_t) }{(u'-a_t)^2 + (b_t - u')^2}\leq C,$$
and the result follows by the maximum principle.
\qed
\end{proof}

\section{Gromov-Hausdorff convergence} \label{sectionGH}

In this section we prove Theorem \ref{thm3} (and hence also Theorem \ref{thm1}) on the Gromov-Hausdorff convergence of the K\"ahler-Ricci flow.  We assume in this section that $g(t)$ is a solution of the K\"ahler-Ricci flow (\ref{KRF0}) on $[0,T)$ with initial metric $g_0$ satisfying the Calabi symmetry condition.

We begin by recalling the definition of Gromov-Hausdorff convergence.  It will be convenient to use the characterization  given in, for example, \cite{F} or \cite{GW}.   Let $(X, d_X)$ and $(Y,d_Y)$ be two compact metric spaces.  We define the Gromov-Hausdorff distance $d_{\textrm{GH}}(X,Y)$ to be the infimum of all $\varepsilon>0$ such that the following holds.   There exist maps $F: X \rightarrow Y$ and $G: Y \rightarrow X$   such that
\begin{equation} \label{GH1}
|d_X(x_1, x_2) - d_Y (F(x_1), F(x_2))|\leq \varepsilon, \quad \textrm{for all } x_1, x_2 \in X
\end{equation} and
\begin{equation} \label{GH2}
d_X(x, G\circ F(x)) < \varepsilon, \quad \textrm{for all } x \in X
\end{equation}
and the two symmetric properties for $Y$ also hold.   Note that we do not require the maps $F$ and $G$ to be continuous.

We say that a sequence of compact metric spaces $(X_i, d_{X_i})$ converges to $(Y, d_Y)$ in the sense of Gromov-Hausdorff if $d_{\textrm{GH}}(X_i, Y)$ tends to zero as $i$ tends to infinity.

\subsection{The case $k\ge n$}

Again we consider first the case when $k \ge n$.  We prove the following.

\begin{theorem}
$(M, g(t)) $ converges to $(\mathbb{P}^{n-1}, a_T g_{\textrm{FS}})$ in the Gromov-Hausdorff sense as $t\rightarrow T$.

\end{theorem}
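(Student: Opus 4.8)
The plan is to use the bundle projection $\pi\colon M \to \mathbb{P}^{n-1}$ itself as the forward Gromov--Hausdorff approximation, together with a (possibly discontinuous) section $\sigma\colon \mathbb{P}^{n-1}\to M$ with $\pi\circ\sigma=\mathrm{id}$ as the backward map. Write $d_t$ for the distance induced by $g(t)$ on $M$ and $d_\infty$ for the distance induced by $a_T g_{\textrm{FS}}$ on $\mathbb{P}^{n-1}$; recall from (\ref{btat}) that $a_t\to a_T>0$, so since $\mathbb{P}^{n-1}$ has finite $g_{\textrm{FS}}$-diameter we have $d_{a_t g_{\textrm{FS}}}\to d_\infty$ uniformly. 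I would verify (\ref{GH1}) and (\ref{GH2}) with $F=\pi$ and $G=\sigma$ chosen so that each $\sigma(z)$ lies on the compact hypersurface $\{\rho=0\}$, which is disjoint from $D_0\cup D_\infty$. Condition (\ref{GH2}) is then immediate: $p$ and $\sigma(\pi(p))$ lie in a common fiber, so $d_t(p,\sigma(\pi(p)))$ is at most the fiber diameter, which tends to zero uniformly by Corollary \ref{corollaryfiber}; the symmetric condition on $\mathbb{P}^{n-1}$ holds trivially because $\pi\circ\sigma=\mathrm{id}$. Everything therefore reduces to proving $|d_t(p,q)-d_\infty(\pi p,\pi q)|\to 0$ uniformly in $p,q$.

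The lower bound $d_t(p,q)\ge d_\infty(\pi p,\pi q)-o(1)$ is the easy direction. By Lemma \ref{lowerbd} we have $\omega(t)\ge a_t\chi=a_t\pi^*\ofs$, so any path from $p$ to $q$ has $g(t)$-length at least $\sqrt{a_t}$ times the $g_{\textrm{FS}}$-length of its projection; taking the infimum gives $d_t(p,q)\ge d_{a_t g_{\textrm{FS}}}(\pi p,\pi q)$, and the claim follows from $a_t\to a_T$.

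The main work is the matching upper bound $d_t(p,q)\le d_\infty(\pi p,\pi q)+o(1)$, and this is where I expect the real content to lie. Given $p,q$, let $\bar\gamma$ be a minimizing $g_{\textrm{FS}}$-geodesic from $\pi(p)$ to $\pi(q)$, parametrized by arc length, and lift it to a smooth path $\gamma$ lying in the compact set $K=\{|\rho|\le 1\}\subset M\setminus(D_0\cup D_\infty)$ with $\pi\circ\gamma=\bar\gamma$ and uniformly bounded $\hat{g}_0$-speed. By Theorem \ref{thmconv}(ii), $\omega(t)\to a_T\chi$ uniformly on $K$; since $\chi=\pi^*\ofs$ annihilates the fiber directions, this forces $g(t)(\dot\gamma,\dot\gamma)\to a_T|\dot{\bar\gamma}|^2_{g_{\textrm{FS}}}$ uniformly along $\gamma$, regardless of the bounded fiber component of $\dot\gamma$, so that $L_{g(t)}(\gamma)\to\sqrt{a_T}\,L_{g_{\textrm{FS}}}(\bar\gamma)=d_\infty(\pi p,\pi q)$. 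I would then close $\gamma$ up to a path from $p$ to $q$: its endpoints lie in the fibers over $\pi(p)$ and $\pi(q)$ and can be joined to $p$ and $q$ within those fibers at a cost bounded by the fiber diameters, which is $o(1)$ by Corollary \ref{corollaryfiber}. Combining the two bounds gives $|d_t(p,q)-d_\infty(\pi p,\pi q)|=o(1)$, which establishes (\ref{GH1}) and completes the proof. The delicate point is that \emph{all} error terms must be controlled uniformly in $p$ and $q$; this is guaranteed precisely because both the fiber-collapsing of Corollary \ref{corollaryfiber} and the metric convergence of Theorem \ref{thmconv}(ii) are uniform.
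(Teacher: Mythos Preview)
Your proposal is correct and follows essentially the same approach as the paper: both use $F=\pi$ and a section $G=\sigma$ as the Gromov--Hausdorff comparison maps, invoke Corollary \ref{corollaryfiber} for the fiber diameters, Lemma \ref{lowerbd} for the lower distance bound, and Theorem \ref{thmconv} on a compact set away from $D_0\cup D_\infty$ for the upper bound via a lifted geodesic. Your explicit allowance for a discontinuous $\sigma$ (and separate construction of a smooth lift of each base geodesic into $\{|\rho|\le 1\}$) is in fact slightly more careful than the paper's formulation, since for $k\ge 1$ no \emph{global} smooth section of $\pi$ avoiding $D_0\cup D_\infty$ exists; only smooth sections over contractible subsets (such as a single geodesic) are needed, and those do exist.
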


\begin{proof}  We write $d_{g(t)}$ and $d_{T}$ for the distance functions on $M$ and $\mathbb{P}^{n-1}$ associated to $g(t)$ and $a_T \omega_{\textrm{FS}}$ respectively.  Let $\ve>0$ be given.   Let $\sigma: \mathbb{P}^{n-1} \rightarrow M$ be any smooth map satisfying $\pi \circ \sigma = \textrm{id}_{\mathbb{P}^{n-1}}$ such that $\sigma(\mathbb{P}^{n-1})$ does not intersect $D_0 \cup D_{\infty}$.  We first verify that (\ref{GH2}) (and its analog for $Y$) hold whenever $t$ is sufficiently close to $T$.  We are taking here $(X, d_X) = (M, d_{g(t)})$, $(Y, d_Y) = (\mathbb{P}^{n-1}, d_{T})$, $F= \pi$ and $G=\sigma$ in the definition of Gromov-Hausdorff convergence.

For $x \in M$, by Corollary \ref{corollaryfiber},
$$d_{g(t)} (x, \sigma\circ \pi(x)) \leq \textrm{diam}_{g(t)} (\pi^{-1}(\pi(x))) \longrightarrow 0$$ uniformly in $x$ as $t\rightarrow T$.
Moreover, for any $y$ in $\mathbb{P}^{n-1}$, $ d_{\textrm{FS}} ( y,  \pi \circ\sigma(y)) =0$ holds trivially.

Now we verify (\ref{GH1}).   First, again by Corollary \ref{corollaryfiber}, we choose $t$ close enough to $T$ so that for all $z$ in $\mathbb{P}^{n-1}$,
\begin{equation} \label{diam}
\textrm{diam}_{g(t)} ( \pi^{-1}(z)) < \varepsilon/4.
\end{equation}
For any $x_1, x_2 \in M$, let $y_i = \pi(x_i) \in \mathbb{P}^{n-1}$. Let $\gamma$ be a geodesic in $\mathbb{P}^{n-1}$ such that $d_{T}(y_1, y_2) = L_{a_T g_{\textrm{FS}}}(\gamma),$
where $L_{ a_T g_{\textrm{FS} }}( \gamma ) $ is the arc length of $\gamma$ with respect to $a_T g_{\textrm{FS}}$.
Choose a small tubular neighborhood $N$ of $D_0 \cup D_\infty$ so that $\sigma(\mathbb{P}^{n-1})$ does not intersect $N$.  Then $\tilde{\gamma}= \sigma \circ \gamma$ is a smooth path in $M \setminus N$ joining the points  $x'_1= \sigma(y_1) $ and $x_2'= \sigma(y_2)$.
By Theorem \ref{thmconv}, $g(t)$ converges to $a_T \pi^*g_{\textrm{FS}}$ uniformly on $M \setminus N$, and hence for $t$ sufficiently close to $T$,
\begin{equation} \label{L}
L_{g(t)}(\tilde{\gamma}) < L_{a_T g_{\textrm{FS}}}(\gamma) + \varepsilon/2.
\end{equation}
Then from (\ref{diam}) and (\ref{L}),
\begin{equation} \label{GH11}
 d_{g(t)}(x_1, x_2) \leq d_{g(t)}(x_1', x_2')+ \frac{\varepsilon}{2}  \leq  L_{g(t)}(\tilde{\gamma}) +\frac{\varepsilon}{2}
  \leq    L_{a_T g_{\textrm{FS}}}(\gamma) +\varepsilon = d_{T}(y_1, y_2) + \varepsilon.
 \end{equation}
On the other hand, by Lemma \ref{lowerbd},
\begin{equation} \label{GH12}
d_{g(t)}(x_1, x_2) \geq  \left(\frac{a_t}{a_T}\right)^{1/2} d_{T}( y_1, y_2),
\end{equation}
and $a_t \rightarrow a_T$ as $t \rightarrow T$.
Combining (\ref{GH11}) and (\ref{GH12}) gives
$$|d_{g(t)} (x_1, x_2) - d_{T}(\pi (x_1), \pi(x_2)) |\leq \varepsilon,$$
and hence the first case of (\ref{GH1}).

The second case of (\ref{GH1}) is simpler.   Let $y_1, y_2$  be in  $\mathbb{P}^{n-1}$ and write $x_i = \sigma (y_i)$.  Since $\sigma(\mathbb{P}^{n-1})$ does not intersect $D_0 \cup D_{\infty}$, we can apply Theorem \ref{thmconv} to obtain the following  convergence uniformly in $t$ and the choice of $y_1, y_2$, $x_1$, $x_2$:
$$\lim_{t \rightarrow T} d_{g(t)} (x_1, x_2) =  d_{T} (y_1, y_2),$$
as required.
\qed\end{proof}

\subsection{The case $1 \le k \le n-1$.}

If $a_0(n+k) > b_0(n-k)$ then one can apply verbatim the argument as in the case $k \geq n$ and the K\"ahler-Ricci flow collapses to $\mathbb{P}^{n-1}$.

If $a_0(n+k) = b_0(n-k)$, then $\alpha_0 $ is proportional to the first Chern class $c_1(M)$.  As discussed in subsection \ref{subcaseperelman},
 the diameter is uniformly bounded along  the normalized K\"ahler-Ricci flow with initial K\"ahler metric in $c_1(M)$. Then after scaling, the diameter tends to $0$ along the unnormalized K\"ahler-Ricci flow.

We consider then just the subcase $a_0(n+k) < b_0(n-k)$.  We will show that as $t \rightarrow T$ the divisor $D_0$ in $M$ contracts. First, we have the following lemma.

\begin{lemma} \label{lemmagup} There exists a uniform constant $C$ such that the metric $g_{i\ov{j}}=g_{i\ov{j}}(t)$ on $\mathbb{C}^n\setminus \{0\}$ satisfies the estimate
\begin{equation} \label{gub}
g_{i\ov{j}}(t) \le a_t \chi_{i\ov{j}} + Ce^{(k-n)\rho/n} \delta_{ij},
\end{equation}
where $\chi_{i\ov{j}} = e^{-\rho} \delta_{ij} - e^{-2\rho} x_i x_j $.
\end{lemma}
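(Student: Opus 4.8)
The plan is to reduce the asserted matrix inequality to a pointwise eigenvalue computation and then invoke the two estimates on $u'$ and $u''$ already established in this subcase. First I would recall from (\ref{metric}) and (\ref{chi}) that, on $\mathbb{C}^n \setminus \{0\}$,
\[
g_{i\ov{j}} = e^{-\rho} u' \delta_{ij} + e^{-2\rho} \ov{x}_i x_j (u'' - u'), \qquad \chi_{i\ov{j}} = e^{-\rho} \delta_{ij} - e^{-2\rho} \ov{x}_i x_j,
\]
so that proving (\ref{gub}) amounts to showing that the Hermitian matrix
\[
A_{i\ov{j}} := g_{i\ov{j}} - a_t \chi_{i\ov{j}} - C e^{(k-n)\rho/n} \delta_{ij} = P \, \delta_{ij} + Q \, \ov{x}_i x_j
\]
is negative semi-definite, where $P = e^{-\rho}(u' - a_t) - C e^{(k-n)\rho/n}$ and $Q = e^{-2\rho}(u'' - u' + a_t)$.

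The key structural observation I would exploit is that $\ov{x}_i x_j$ is a rank-one positive Hermitian matrix: the associated form is $|\sum_i \ov{x}_i \xi^i|^2$, with nonzero eigenvalue $\sum_i |x_i|^2 = e^{\rho}$ in the direction $x=(x_1,\ldots,x_n)$ and eigenvalue $0$ on its orthogonal complement. Hence $A_{i\ov{j}}$ is diagonalized in the same basis, with eigenvalue $P$ of multiplicity $n-1$ (the directions orthogonal to $x$) and eigenvalue $P + Q e^{\rho}$ (the $x$-direction). A short computation collapses the latter:
\[
P + Q e^{\rho} = e^{-\rho}(u' - a_t) + e^{-\rho}(u'' - u' + a_t) - C e^{(k-n)\rho/n} = e^{-\rho} u'' - C e^{(k-n)\rho/n}.
\]
Thus $A \le 0$ is equivalent to the two scalar bounds $P \le 0$ and $P + Q e^{\rho} \le 0$, that is, $u' - a_t \le C e^{k\rho/n}$ and $u'' \le C e^{k\rho/n}$.

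These are exactly the estimates proved earlier: the first is Lemma \ref{lemmak1up}, and the second is the concluding inequality of Lemma \ref{lemmaudpk1}, both valid uniformly for all $\rho \in \mathbb{R}$ and $t \in [0,T)$ in this subcase. Taking $C$ to be the larger of the two constants furnished by those lemmas makes both eigenvalues nonpositive simultaneously, which gives (\ref{gub}). I do not anticipate a genuine obstacle here, since the two lemmas were tailored to supply precisely these bounds; the only point requiring care is the bookkeeping of the rank-one diagonalization and verifying that the $x$-direction eigenvalue simplifies so cleanly to $e^{-\rho}u'' - Ce^{(k-n)\rho/n}$, after which the estimate is immediate.
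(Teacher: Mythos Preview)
Your proposal is correct and follows essentially the same route as the paper: both arguments reduce (\ref{gub}) to the two scalar estimates $u'-a_t\le Ce^{k\rho/n}$ and $u''\le Ce^{k\rho/n}$ from Lemmas~\ref{lemmak1up} and~\ref{lemmaudpk1}, using the rank-one structure of $\ov{x}_i x_j$. The paper does this by a direct chain of matrix inequalities (bounding each term and using $e^{-\rho}\ov{x}_i x_j\le\delta_{ij}$), while you diagonalize explicitly and check the two eigenvalues; the content is the same.
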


\begin{proof}  This follows from Lemmas \ref{lemmak1up} and \ref{lemmaudpk1}.  Indeed,
\begin{eqnarray*}
g_{i\ov{j}} & = & e^{-\rho} u' \delta_{ij} + e^{-2\rho} \ov{x}_i x_j (u'' - u') \\
& \le & C e^{(k-n)\rho/n} \delta_{ij} + e^{-\rho} a_t \delta_{ij} + Ce^{(k-n)\rho/n} e^{-\rho} \ov{x}_i x_j- e^{-2\rho} \ov{x}_i x_j a_t \\
& \le & a_t \chi_{i\ov{j}} + Ce^{(k-n)\rho/n} \delta_{ij} ,
\end{eqnarray*}
since $u'(\rho, t) > a_t$. \qed
\end{proof}

Recall that by Theorem \ref{theoremTZ}, the metric $g_t$ along the K\"ahler-Ricci flow converges in $C^{\infty}$ on compact subsets of $M \setminus D_0$ to a singular metric $g_T$ which is smooth on $M \setminus D_0$.
We will apply this and Lemma \ref{lemmagup} to prove the following result on the metric completion of the manifold $(M \setminus D_0, g_T)$.

\begin{theorem} \label{thmcompletion} Let $g_T$ be the smooth metric on $M \setminus D_0$ obtained by $$ g_T = \lim_{t\rightarrow T} g(t),$$
and let $(\overline{M}, d)$ be the completion of the Riemannian manifold $(M \setminus D_0, g_T)$ as a metric space.  Then $(\overline{M},d)$ is a metric space with finite diameter and is homeomorphic to the orbifold $\mathbb{P}^n/\mathbb{Z}_k$ (see Section \ref{orbifold}).
\end{theorem}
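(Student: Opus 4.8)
The plan is to analyze the limiting metric $g_T$ near the contracted divisor $D_0$ (the end $\rho \to -\infty$), show that this end has finite distance from the interior and that the level sets $\{ \rho = \rho_0 \}$ collapse as $\rho_0 \to -\infty$, conclude that the completion is obtained from $M \setminus D_0$ by adjoining a single point, and finally identify the result with $\mathbb{P}^n/\mathbb{Z}_k$ using the holomorphic contraction $f$ of Section \ref{orbifold}. Throughout I work in the parametrization of $M \setminus (D_0 \cup D_\infty)$ by $\mathbb{C}^n \setminus \{ 0 \}$, and I use that by Theorem \ref{theoremTZ} the metric $g_T$ is smooth on $M \setminus D_0$, so its Riemannian path distance $d_{g_T}$ is a genuine metric there whose completion is $(\overline M, d)$.

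First I would record the behavior of $g_T$ near $D_0$. Writing $|x| = e^{\rho/2}$, a direct computation from (\ref{metric}) shows that along a radial ray the $g_T$-length element is proportional to $\sqrt{u_T''(\rho)}\, d\rho$, while on a level set $\{ \rho = \rho_0 \}$ every tangent direction has $g_T$-length bounded by a fixed constant times $\sqrt{u_T'(\rho_0)}$ or $\sqrt{u_T''(\rho_0)}$. Since $a_T = 0$ in this subcase, Lemmas \ref{lemmak1up} and \ref{lemmaudpk1} give, in the limit $t \to T$, the bounds $u_T'(\rho) \le C e^{k\rho/n}$ and $u_T''(\rho) \le C e^{k\rho/n}$. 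Hence the radial distance from a level $\{ \rho = \rho_0 \}$ to the end is at most $\frac{1}{2} \int_{-\infty}^{\rho_0} \sqrt{u_T''(\rho)}\, d\rho \le C' e^{k\rho_0/(2n)}$, and the $g_T$-diameter of $\{ \rho = \rho_0 \}$ is bounded by $C'' e^{k\rho_0/(2n)}$; both tend to $0$ as $\rho_0 \to -\infty$.

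From these two estimates I would deduce that $\mathrm{diam}_{g_T} \{ \rho \le \rho_0 \} \to 0$ as $\rho_0 \to -\infty$: any point with $\rho \le \rho_0$ is joined by a radial path of length $\le C' e^{k\rho_0/(2n)}$ to the level set $\{ \rho = \rho_0 \}$, which itself has small diameter. Thus every sequence $x_j$ with $\rho(x_j) \to -\infty$ is $d_{g_T}$-Cauchy, any two such sequences are equivalent, and any $d_{g_T}$-Cauchy sequence not approaching $D_0$ stays in a compact region and already converges in $M \setminus D_0$. Therefore $\overline M = (M \setminus D_0) \cup \{ p_0 \}$ for a single added point $p_0$, with the sets $\{ \rho < -R \} \cup \{ p_0 \}$ forming a neighborhood basis at $p_0$. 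Finite diameter follows because $\overline{\{ \rho \ge -R \}}$ is a compact subset of $(M \setminus D_0, g_T)$ while the complement lies in an arbitrarily small ball about $p_0$; this same decomposition shows $\overline M$ is totally bounded, hence, being complete, compact. To finish, note that by Section \ref{orbifold} the map $f$ restricts to a biholomorphism $M \setminus D_0 \to (\mathbb{P}^n/\mathbb{Z}_k) \setminus \{ q \}$ with $q = [0, \ldots, 0, 1]$, and define $\overline f \colon \overline M \to \mathbb{P}^n/\mathbb{Z}_k$ by $\overline f = f$ on $M \setminus D_0$ and $\overline f(p_0) = q$. Since the $g_T$-topology agrees with the manifold topology on $M \setminus D_0$, and since $x_j \to p_0$ in $\overline M$ precisely when $\rho(x_j) \to -\infty$, which is precisely when $f(x_j) \to q$, the map $\overline f$ is a continuous bijection from a compact space to a Hausdorff space, hence a homeomorphism.

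I expect the main obstacle to be the passage from the metric estimates to the topological conclusion rather than the estimates themselves, which reduce cleanly to Lemmas \ref{lemmak1up} and \ref{lemmaudpk1}. Specifically, one must verify carefully that exactly one point is adjoined, that its neighborhood structure matches that of the orbifold point $q$, and that $\overline M$ is genuinely compact so that the continuous bijection $\overline f$ is automatically a homeomorphism. A further point requiring care is confirming that the path distance of the smooth limiting metric $g_T$ is the metric induced on $M \setminus D_0$ by the completion, and that $g_T$-Cauchy sequences which do not approach $D_0$ remain in a fixed compact region.
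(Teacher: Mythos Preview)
Your proposal is correct and follows essentially the same route as the paper: both arguments rest on the bounds of Lemmas \ref{lemmak1up} and \ref{lemmaudpk1} (which the paper repackages as Lemma \ref{lemmagup}, i.e.\ $(g_T)_{i\bar j}\le C r^{-2(n-k)/n}\delta_{ij}$ on the punctured ball) to show the end $\rho\to -\infty$ has finite $g_T$-distance and its level sets collapse, so that the completion adjoins a single point which matches the orbifold point under $f$. The only difference is one of presentation: the paper states the topological conclusion as a ``simple observation'' about metrics on $B\setminus\{0\}$ bounded by $C r^{-\beta}\delta_{ij}$ with $\beta<2$, while you spell out the Cauchy-sequence and compactness argument explicitly.
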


\begin{proof}
Let $f: M \rightarrow \mathbb{P}^n/\mathbb{Z}_k$ be the holomorphic map described in Section \ref{orbifold}.  Recall that $f$ restricted to $M \setminus D_0$ is represented in the $(x_1, \ldots, x_n)$ coordinates by the identity map $\textrm{id}: \mathbb{C}^n\setminus \{0\} \rightarrow \mathbb{C}^n \setminus \{ 0 \}$.  Now $f$ is an isomorphism on $M \setminus D_0$  and Theorem \ref{theoremTZ} implies that $g(t)$ converges locally in $C^\infty(M \setminus
D_0).$  Thus
it only remains to check the limiting behavior of $g(t)$ near $D_0$ as $t\rightarrow T$ (that is, in a neighborhood of the origin in $\mathbb{C}^n$).

We make a simple observation.  Suppose $g_{ij}$ is a continuous Riemannian metric on $B \setminus \{0\}$, where $B = \{ (x_1, \ldots, x_n) \in \mathbb{C}^n \ | \ |x_1|^2 + \cdots + |x_n|^2 \le 1\}$.  Suppose in addition that $g_{ij}$ satisfies the inequality
$$g_{ij} \le \frac{C}{r^{\beta}} \delta_{ij},$$
for some $\beta <2$, where $r = ( |x_1|^2 + \cdots + |x_n|^2)^{1/2}$.  Then the completion of $(B \setminus \{ 0 \}, g)$ as a metric space has finite diameter and is homeomorphic  to $B$ with  topology induced from $\mathbb{C}^n$.

Now from Lemma \ref{lemmagup}, since $a_t \rightarrow 0$ as $t \rightarrow T$, we see that on $\mathbb{C}^n\setminus \{ 0 \}$,
$$(g_T)_{i\ov{j}} \le \frac{C}{r^{2(n-k)/n}} \delta_{ij}$$
and hence the theorem follows from the observation above with $\beta = 2(n-k)/n$. \qed
\end{proof}

In addition, the proof of Theorem \ref{thmcompletion} gives:

\begin{lemma} Let $N_\varepsilon$ be an $\varepsilon$-tubular neighborhood of $D_0$ in $M$ with respect to the fixed metric $\hat{g}_0$. Then
$$ \lim_{\varepsilon\rightarrow 0} \limsup_{t \rightarrow T} \emph{diam}_{g(t)} N_{\varepsilon} = 0.$$
\end{lemma}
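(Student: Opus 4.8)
The plan is to follow the proof of Theorem \ref{thmcompletion}, using the uniform-in-$t$ upper bound of Lemma \ref{lemmagup} together with the fact that $\chi$, being the pullback of the Fubini--Study metric, degenerates along radial directions. Since $\hat{g}_0$ satisfies the Calabi symmetry, its distance function to $D_0$ is $G_k$-invariant and hence depends only on $\rho$; consequently the nested neighborhoods $N_\varepsilon$ shrink to $D_0$ as $\varepsilon \to 0$, and there is $r_\varepsilon \to 0$ with $N_\varepsilon \setminus D_0 \subseteq \{ 0 < |x| \le r_\varepsilon \}$ in the $(x_1,\dots,x_n)$ coordinates. It therefore suffices to bound, uniformly in $t \in [0,T)$, the $g(t)$-distance of a point $x$ with $|x| \le r_\varepsilon$ to the divisor $D_0$, and separately to control the $g(t)$-diameter of $D_0$ itself.

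First I would estimate the distance to $D_0$ along the radial curve $\gamma(s) = s\omega$, $s \in (0,|x|]$, where $\omega = x/|x|$; as $s \to 0$ this curve limits to a point of $D_0$. The key point is that a direct computation from $\chi_{i\ov{j}} = e^{-\rho}\delta_{ij} - e^{-2\rho}\ov{x}_i x_j$ gives $\chi_{i\ov{j}}\,\omega^i \ov{\omega}^j = 0$ in the radial direction, so that the $a_t \chi$ term in Lemma \ref{lemmagup} drops out and the $g(t)$-speed of $\gamma$ satisfies $g_{i\ov{j}}(t)\,\omega^i\ov{\omega}^j \le C e^{(k-n)\rho/n}$ with $C$ independent of $t$. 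Writing $\rho = 2\log s$ and integrating, the $g(t)$-length of $\gamma$ is at most $\int_0^{|x|} \sqrt{C}\, s^{(k-n)/n}\, ds = \tfrac{n\sqrt{C}}{k}\,|x|^{k/n}$, the integral converging precisely because $k \ge 1$ forces the exponent $(k-n)/n > -1$ (equivalently $\beta = 2(n-k)/n < 2$, as in the observation used for Theorem \ref{thmcompletion}). Hence $d_{g(t)}(x, D_0) \le \tfrac{n\sqrt{C}}{k}\, r_\varepsilon^{k/n}$ uniformly in $t$.

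For the diameter of $D_0$, note that $D_\infty \cap D_0 = \emptyset$ forces $\alpha_t|_{D_0} = a_t [D_H]|_{D_0}$, the $a_t$-multiple of the hyperplane class on $D_0 \cong \mathbb{P}^{n-1}$; by the symmetry the induced metric $g(t)|_{D_0}$ is exactly $a_t g_{\textrm{FS}}$, so $\textrm{diam}_{g(t)} D_0 \le C \sqrt{a_t}$. Combining the two bounds by the triangle inequality (join each $x_i$ radially to its limit on $D_0$, then move within $D_0$), for any $x_1, x_2 \in N_\varepsilon$ one gets $d_{g(t)}(x_1, x_2) \le \tfrac{2n\sqrt{C}}{k}\, r_\varepsilon^{k/n} + C\sqrt{a_t}$. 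Taking $\limsup_{t\to T}$ annihilates the second term since $a_t \to 0$, and then letting $\varepsilon \to 0$ sends $r_\varepsilon \to 0$, which proves the claim.

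The main obstacle is obtaining a bound uniform in $t$: a crude application of Lemma \ref{lemmagup} with $a_t \le a_0$ would estimate $a_t\chi$ by a term of size $|x|^{-2}$, the borderline case in which the radial length integral diverges logarithmically and no vanishing diameter results. The radial trick is precisely what isolates the genuinely $t$-independent, $\chi$-free part of the metric and pushes the entire $a_t$-dependence onto the shrinking divisor $D_0$; the only other point needing care is the identification of $N_\varepsilon$ with a shrinking coordinate neighborhood of the origin, which is immediate from the Calabi symmetry of $\hat{g}_0$.
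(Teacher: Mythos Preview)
Your argument is correct and is essentially the approach the paper intends: the paper's ``proof'' is simply the remark that the proof of Theorem \ref{thmcompletion} already gives the lemma, and what you have written is exactly the unpacking of that remark using Lemma \ref{lemmagup}. In particular, your observation that $\chi_{i\ov j}\omega^i\ov{\omega}^j=0$ along radial rays (so the $a_t\chi$ term contributes nothing to radial lengths) together with the bound $\textrm{diam}_{g(t)}D_0 = \sqrt{a_t}\,\textrm{diam}_{g_{\textrm{FS}}}\mathbb{P}^{n-1}\to 0$ is precisely the extra ingredient needed to upgrade the $t=T$ statement in Theorem \ref{thmcompletion} to one uniform in $t$; your final paragraph correctly isolates why the naive bound $a_t\chi\le a_0 e^{-\rho}\delta$ would fail.
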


We can then prove:

\begin{theorem}  $(M, g(t))$ converges to $(\overline{M}, d)$ in the Gromov-Hausdorff sense as $t \rightarrow T$, where $(\overline{M}, d)$ is the metric space as described in Theorem \ref{thmcompletion}.
\end{theorem}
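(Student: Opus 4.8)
The plan is to build the two Gromov--Hausdorff maps directly from the holomorphic contraction $f\colon M \to \mathbb{P}^n/\mathbb{Z}_k$ of Section \ref{orbifold}, under the identification of $(\overline{M},d)$ with $\mathbb{P}^n/\mathbb{Z}_k$ provided by Theorem \ref{thmcompletion}. Write $p_0$ for the image point $f(D_0)=[0,\ldots,0,1]$, and recall that $f$ restricts to an isomorphism $M\setminus D_0 \to \overline{M}\setminus\{p_0\}$ under which the completion metric $d$ coincides, on $\overline{M}\setminus\{p_0\}$, with the Riemannian distance of $g_T$. I would take $F=f$ (so $F(D_0)=p_0$), let $G$ be the inverse $f^{-1}$ on $\overline{M}\setminus\{p_0\}$, and set $G(p_0)$ to be a fixed point $x_0\in D_0$. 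With these choices $G\circ F=\mathrm{id}$ on $M\setminus D_0$ and $F\circ G=\mathrm{id}$ on $\overline{M}\setminus\{p_0\}$, so condition (\ref{GH2}) and its symmetric analogue reduce to the statement that points of $D_0$ are $g(t)$-close to $x_0$; this is immediate from the preceding Lemma, since $D_0\subset N_\varepsilon$ for every $\varepsilon$ and $\mathrm{diam}_{g(t)}N_\varepsilon\to 0$.

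The substance is the metric distortion estimate (\ref{GH1}), namely $|d_{g(t)}(x_1,x_2)-d(Fx_1,Fx_2)|\le\varepsilon$ for all $x_1,x_2\in M$; the symmetric estimate for $\overline{M}$ follows by substituting $x_i=G(y_i)$, since then $Fx_i=y_i$. Fixing $\varepsilon>0$, I would first use the preceding Lemma together with the proof of Theorem \ref{thmcompletion}, where Lemma \ref{lemmagup} yields $(g_T)_{i\ov{j}}\le C r^{-\beta}\delta_{ij}$ with $\beta=2(n-k)/n<2$, to choose a tubular neighborhood $N_\delta$ of $D_0$ and a time $t$ near $T$ so that both $\mathrm{diam}_{g(t)}N_\delta$ and the $d$-diameter of the neighborhood $V:=f(N_\delta)$ of $p_0$ are less than $\varepsilon/4$. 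On the compact complement $K:=\overline{M\setminus N_\delta}$, Theorem \ref{theoremTZ} gives $g(t)\to g_T$ in $C^\infty$, hence $(1-\varepsilon')g_T\le g(t)\le(1+\varepsilon')g_T$ there for $t$ near $T$; consequently the $g(t)$- and $g_T$-lengths of any path lying in $K$ agree up to the factor $(1\pm\varepsilon')^{1/2}$, and the $g_T$-length of such a path equals the $d$-length of its $f$-image.

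To prove the two inequalities I would use a \emph{first-entry/last-exit} decomposition to neutralise the fact that a minimising curve may cross the collapsing region many times. For the lower bound, let $\gamma$ be a minimising $g(t)$-geodesic from $x_1$ to $x_2$, let $a$ be its first entry into $N_\delta$ and $b$ its last exit, and write $\gamma=\gamma_1\cup\gamma_{\mathrm{mid}}\cup\gamma_2$ with $\gamma_1,\gamma_2\subset K$. Since $F(a),F(b)\in V$ we have $d(Fa,Fb)<\varepsilon/4$, so the triangle inequality in $\overline{M}$ gives $d(Fx_1,Fx_2)\le\mathrm{length}_d(F\gamma_1)+\varepsilon/4+\mathrm{length}_d(F\gamma_2)\le(1+\varepsilon')\,d_{g(t)}(x_1,x_2)+\varepsilon/4$, whence $d_{g(t)}(x_1,x_2)\ge(1-\varepsilon')\big(d(Fx_1,Fx_2)-\varepsilon/4\big)$. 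The upper bound is symmetric: I would take a path in $\overline{M}$ from $Fx_1$ to $Fx_2$ of $d$-length within $\varepsilon'$ of $d(Fx_1,Fx_2)$, split it at its first entry to and last exit from $V$, transport the two outer arcs to $K$ by $G$, and bridge the two boundary points using $\mathrm{diam}_{g(t)}N_\delta<\varepsilon/4$. Because $(\overline{M},d)$ has finite diameter by Theorem \ref{thmcompletion}, the factors $(1\pm\varepsilon')$ absorb into an additive $\varepsilon/4$, giving both inequalities with total error at most $\varepsilon$; the cases in which an endpoint lies in $N_\delta$ (in particular in $D_0$) are handled by first moving it to $\partial N_\delta$ at a cost of at most $\varepsilon/4$. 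I expect the only delicate point to be exactly this control of repeated excursions through the shrinking neighborhood of $D_0$, which the first-entry/last-exit device resolves once $\mathrm{diam}_d V$ and $\mathrm{diam}_{g(t)}N_\delta$ are known to be small.
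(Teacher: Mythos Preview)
Your proposal is correct and follows essentially the same approach as the paper: the paper chooses $F=f$ and $G$ any right inverse of $F$, then leaves the verification of the Gromov--Hausdorff conditions to the reader, and your argument supplies precisely those details. Your first-entry/last-exit decomposition, combined with the diameter-shrinking lemma for $N_\delta$ and the $C^\infty$ convergence $g(t)\to g_T$ on compacta of $M\setminus D_0$ from Theorem \ref{theoremTZ}, is exactly the kind of verification the paper implicitly has in mind.
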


\begin{proof}  This is a simple consequence of the results described above.  Identifying $\overline{M}$ with $\mathbb{P}^n/\mathbb{Z}_k$ as in the proof of Theorem \ref{thmcompletion},
  let $F: M \rightarrow \overline{M}$ be the map corresponding to $f:M \rightarrow \mathbb{P}^n/\mathbb{Z}_k$.
Let $G: \overline{M} \rightarrow M$ be any map satisfying $F \circ G = \textrm{id}_{\overline{M}}$.  Then it is left to the reader to check that, using these functions $F$ and $G$, the Gromov-Hausdorff distance between $(M, g(t))$ and $(\overline{M},d)$ tends to zero as $t \rightarrow T$.
\qed
\end{proof}

This completes the proof of Theorem \ref{thm3}.

\bigskip
\noindent
{\bf Acknowledgements.} \  The authors are grateful to Professor D.H. Phong for his advice, encouragement and support.  In addition, the first-named author thanks Professor G. Tian for some helpful discussions.  The  second-named author thanks Professor S. Casalaina-Martin for some useful conversations.  The authors are also grateful to Valentino Tosatti for some helpful comments on a previous draft of this paper.

\bigskip
\bigskip

$^{*}$ Department of Mathematics \\
Rutgers University, Piscataway, NJ 08854\\
Email: jiansong@math.rutgers.edu \\

$^{\dagger}$ Department of Mathematics \\
University of California San Diego, La Jolla, CA 92093\\
Email:  weinkove@math.ucsd.edu

\end{document}